\newtheorem{theorem}{Theorem}[section]
\newtheorem{lemma}[theorem]{Lemma}
\newtheorem{thm}[theorem]{Theorem}
\newtheorem{defi}[theorem]{Definition}
\newtheorem{prop}[theorem]{Proposition}
\theoremstyle{definition}
\newtheorem{remark}[theorem]{Remark}
\newtheorem{example}[theorem]{Example}
\newcommand{\R}{\mathbb{R}}
\newcommand{\Z}{\mathbb{Z}}
\newcommand{\N}{\mathbb{N}}
\newcommand{\Q}{\mathbb{Q}}
\newcommand{\angbra}[1]{\left\langle #1\right\rangle}
\newcommand{\im}{\operatorname{Im}}
\newcommand{\nbar}[1]{\overline{#1}}
\newcommand{\dcup}{\displaystyle\bigcup}
\newcommand{\dcap}{\displaystyle\bigcap}
\newcommand{\calL}{\mathcal{L}}
\newcommand{\ph}{\varphi}
\newcommand{\eps}{\varepsilon}
\DeclareMathOperator{\id}{id}
\newcommand{\pp}{\mathbb{P}}
\newcommand{\G}{\mathbb{G}}
\newcommand{\Hom}{\operatorname{Hom}}
\newcommand{\calX}{\mathcal{X}}
\newcommand{\calU}{\mathcal{U}}
\newcommand{\into}{\hookrightarrow}
\newcommand{\spec}{\operatorname{Spec}}
\newcommand{\Spec}{\operatorname{Spec}}
\newcommand{\bbT}{\mathbb{T}}
\newcommand{\codim}{\operatorname{codim}}
\newcommand{\vspan}{\mathrm{span}}
\newcommand{\conv}{\operatorname{Conv}}
\newcommand{\rec}{\operatorname{rec}}
\newcommand{\T}{\mathbb{T}}
\newcommand{\calY}{\mathcal{Y}}
\newcommand{\Face}{\operatorname{Face}}
\newcommand{\RelInt}{\operatorname{RelInt}}
\newcommand{\relint}{\operatorname{RelInt}}
\newcommand{\vgamma}{\nbar{\gamma}}
\newcommand{\posloc}{\mathcal{P}}
\newcommand{\ODAG}{\operatorname{ODAG}}
\newcommand{\fncolon}{\colon}
\newcommand{\pairing}{\angbra{\,,}}
\newcommand{\sdrop}{\!\smallsetminus\!}
\begin{document}
\title[Normal completions of toric varieties over rank one valuation rings]{Normal completions of toric varieties over rank one valuation rings and completions of $\Gamma$-admissible fans}
\author{Netanel Friedenberg}
\address{Department of Mathematics, Yale University}
\email{netanel.friedenberg@yale.edu}
\begin{abstract}
We show that any normal toric variety over a rank one valuation ring admits an equivariant open embedding in a normal toric variety which is proper over the valuation ring, after a base-change by a finite extension of valuation rings. If the value group $\Gamma$ is discrete or divisible then no base-change is needed. We give explicit examples which show that existing methods do not produce such normal equivariant completions. Our approach is combinatorial and proceeds by showing that $\Gamma$-admissible fans admit $\Gamma$-admissible completions. In order to show this we prove a combinatorial analog of noetherian reduction which we believe will be of independent interest.
\end{abstract}
\maketitle

\section{Introduction}\label{sec:intro}

Let $v\fncolon K\to\R\cup\{\infty\}$ be a nontrivial valuation on a field $K$ with valuation ring $K^\circ$ and value group $\Gamma$. Let $\bbT$ be a split torus over $K^\circ$ with character lattice $M$ and cocharacter lattice $N$. 
In \cite{Soto} Soto proved that every normal $\bbT$-toric variety $\calX$ admits an equivariant completion, i.e., an equivariant open embedding $\calX\into\nbar{\calX}$ into a $\bbT$-toric variety $\nbar{\calX}$ which is proper over $K^\circ$. 
Unlike in the case of varieties over a field, we cannot immediately conclude that we can find such an embedding with $\nbar{\calX}$ normal, because the normalization of a finite type $K^\circ$-scheme need not be of finite type. 
In particular, we show, by giving explicit examples, that there are projective $\bbT$-toric varieties whose normalizations are not of finite type; see Theorem \ref{thm:BadNormalization}.

In this paper we study normal equivariant completions of normal $\T$-toric varieties. 
Our main result shows that, at least after a finite extension of the base, every normal $\bbT$-toric variety admits such a completion.

\begin{thm}\label{thm:NormalTCompletionsExist}
Let $\calX$ be a normal $\bbT$-toric variety. There are a finite separable totally ramified extension $L/K$ of valued fields, a normal $\bbT_{L^\circ}$-toric variety $\nbar{\calX}$ which is proper over $L^\circ$, and a $\T_{L^\circ}$-equivariant open immersion $\calX_{L^\circ}\into\nbar{\calX}$. 
If $\Gamma$ is discrete or divisible then we may take $L=K$.
\end{thm}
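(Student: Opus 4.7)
The plan is to translate the problem via the equivalence (due to Gubler, Soto, Rohrer, and others) between normal $\bbT$-toric varieties over $K^\circ$ and $\Gamma$-admissible fans in $N_\R\times\R_{\geq 0}$. Under this dictionary, $\calX$ corresponds to a $\Gamma$-admissible fan $\Sigma$; properness over $K^\circ$ corresponds to completeness of the fan (its support being all of $N_\R\times\R_{\geq 0}$); equivariant open immersions correspond to fan inclusions; and base change along a valued extension $L/K$ with value group $\Gamma'\supseteq\Gamma$ corresponds to viewing $\Sigma$ as a $\Gamma'$-admissible fan. Thus the theorem reduces to the following combinatorial statement: every $\Gamma$-admissible fan $\Sigma$ admits a complete $\Gamma'$-admissible extension $\bar\Sigma$, for some finitely generated $\Gamma'\supseteq\Gamma$ realizable as the value group of a finite separable totally ramified extension of $K$, with $\Gamma'=\Gamma$ when $\Gamma$ is discrete or divisible.

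My approach to the combinatorial statement is noetherian reduction to the classical rational case. Since $\Sigma$ has finitely many cones and each is cut out by finitely many linear inequalities with right-hand sides in $\Gamma$, there is a finitely generated subgroup $\Gamma_0\subseteq\Gamma$ for which $\Sigma$ is $\Gamma_0$-admissible. As $\Gamma_0$ is free abelian of finite rank, the $\Gamma_0$-admissibility condition becomes a $\Q$-rationality condition after tensoring with $\Q$, so the classical completion theorem of Ewald (or Ewald--Ishida) for rational polyhedral fans produces a complete $(\Q\otimes\Gamma_0)$-admissible fan $\bar\Sigma$ extending $\Sigma$. Only finitely many new constants from $\Q\otimes\Gamma_0$ appear in $\bar\Sigma$, and adjoining them to $\Gamma$ yields the desired finitely generated enlargement $\Gamma'$. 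When $\Gamma$ is divisible one has $\Q\otimes\Gamma_0\subseteq\Gamma$, and when $\Gamma$ is discrete $\Gamma_0\subseteq\Gamma$ is already saturated, so in both cases $\Gamma'=\Gamma$ works.

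The main obstacle I anticipate is that the classical completion algorithms take little care to control the constants they introduce, so producing a completion whose data lie in a prescribed finitely generated subgroup of $\R$ --- rather than in some larger $\Q$-vector space --- requires a more careful approach; naively applying Ewald--Ishida and then trying to bound the denominators after the fact may fail because new rays can require constants outside the finitely generated span one started with. This is presumably the role of the combinatorial analog of noetherian reduction promised in the abstract: a framework for transferring questions about $\Gamma$-admissible fans to a finitely generated setting in a way that is compatible with refinement and completion, so that the completion one performs over $\Gamma_0$ remains controlled when pulled back up to $\Gamma$. Once the combinatorial completion is in hand, the remaining step of realizing $\Gamma'$ as the value group of a finite separable totally ramified extension $L/K$ is a standard application of Hensel and Krasner type lemmas, with some care in positive residue characteristic to maintain separability.
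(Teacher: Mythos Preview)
Your overall architecture --- translate to fans, complete the fan, realize the enlarged value group via a finite separable totally ramified extension --- matches the paper, and you correctly flag that the combinatorial completion is where the content lies. But the key step is not filled in, and your framing of the obstacle is off. The sentence ``the $\Gamma_0$-admissibility condition becomes a $\Q$-rationality condition after tensoring with $\Q$'' does not describe an actual move: a $\Gamma_0$-admissible cone in $N_\R\times\R_{\geq 0}$ is cut out by inequalities $\langle u,w\rangle+\gamma t\geq 0$ with $\gamma\in\Gamma_0\subset\R$, and since the $\gamma$'s need not be rational these are simply not rational cones, so Ewald--Ishida does not apply to them. The paper's device (Theorem~\ref{thm:CombinatorialNoetherianReduction}) is to \emph{increase the ambient dimension}: choose a $\Q$-basis $\gamma_1,\dots,\gamma_k$ of $\Q\Gamma_0$, embed $\R_{\geq 0}\to\R^k$ via $t\mapsto(t\gamma_1,\dots,t\gamma_k)$, and realize $\Sigma$ as the pullback of a rational fan $\widetilde\Sigma$ in $N_\R\times\R^k$. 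The substantive work (Proposition~\ref{prop:ExtendFan}) is showing that the obvious rational lifts of the cones of $\Sigma$ can be arranged to actually form a fan; this needs a Farkas-type lemma and, crucially, the archimedean property of $\R$ (Lemma~\ref{lemma:ThinRationalConeAroundPoint}; see Theorem~\ref{thm:HigherRankBadFan} for what goes wrong over non-archimedean ordered fields). Once $\widetilde\Sigma$ exists, one completes it rationally and pulls back; the result is automatically $\Gamma$-admissible. So the obstacle is not ``bounding constants produced by an Ewald--Ishida run,'' but producing a genuinely rational fan to which that theorem applies at all.

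There is also a secondary imprecision in your dictionary. The extension $L/K$ is needed not because the completed fan $\nbar\Sigma$ might fail to be $\Gamma$-admissible --- it always is, by Theorem~\ref{thm:AdmissibleCompletionsExist} --- but because $\Gamma$-admissibility only forces the vertices of $\nbar\Sigma|_{N_\R\times\{1\}}$ into $N_{\Q\Gamma}$, whereas finite type of $\calY(\nbar\Sigma)$ requires them to lie in $N_\Gamma$. One passes to $L$ with value group $\Gamma'$ containing those finitely many vertices to repair this. Relatedly, base change does not quite mean ``view $\Sigma$ as $\Gamma'$-admissible'': the identification $\calY(\Sigma,K^\circ)_{L^\circ}\cong\calY(\Sigma,L^\circ)$ itself requires the vertex condition and is applied only to the original $\Sigma$ (where it holds by hypothesis), not to $\nbar\Sigma$.
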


We prove this theorem via a combinatorial approach. In \cite{GublerSoto}, Gubler and Soto classified normal $\bbT$-toric varieties in terms of \textit{$\Gamma$-admissible fans} in $N_{\R}\times\R_{\geq0}$: fans $\Sigma$ in $N_{\R}\times\R_{\geq0}$ such that each cone $\sigma\in\Sigma$ can be written as 
$$\sigma=\{(w,t)\in N_{\R}\times\R_{\geq0}\mid \angbra{u_i,w}+\gamma_i t\geq0\text{ for }i=1,\ldots,\ell\}$$
for some $u_1,\ldots,u_\ell\in M$ and $\gamma_1,\ldots,\gamma_\ell\in\Gamma$, where $N_{\R}:=N\otimes_{\Z}\R$. 
Specifically, they showed that if $\Gamma$ is not discrete then isomorphism classes of normal $\bbT$-toric varieties are in correspondence with $\Gamma$-admissible fans $\Sigma$ in $N_{\R}\times\R_{\geq0}$ such that, for every cone $\sigma\in\Sigma$, all of the vertices of the polyhedron $\sigma\cap(N_{\R}\times\{1\})$ are in $N_{\Gamma}\times\{1\}$. In the case where $\Gamma$ is discrete, it was already shown in \cite[IV, section 3]{ToroidalEmbeddings} that isomorphism types of normal $\bbT$-toric varieties are in bijection with $\Gamma$-admissible fans in $N_{\R}\times\R_{\geq0}$. 
A normal $\T$-toric variety $\calX$ is proper over $K^\circ$ if and only if the corresponding fan $\Sigma$ is complete, i.e., $|\Sigma|=N_{\R}\times\R_{\geq0}$ \cite[Proposition 11.8]{GublerGuideTrop}. Thus much of the content of Theorem \ref{thm:NormalTCompletionsExist} boils down to the following theorem. 

\begin{thm}\label{thm:AdmissibleCompletionsExist}
Any $\Gamma$-admisssible fan in $N_{\R}\times\R_{\geq0}$ has a $\Gamma$-admissible completion.
\end{thm}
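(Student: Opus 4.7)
My plan is to execute a combinatorial noetherian reduction, converting the problem into the classical statement that any rational fan in a lattice admits a rational completion. Since a $\Gamma$-admissible fan $\Sigma$ has only finitely many cones, only finitely many elements of $\Gamma$ appear in the defining inequalities of cones of $\Sigma$; let $\Gamma_0\subset\Gamma$ be the (finitely generated, hence free abelian) subgroup they generate, with an $\R$-linearly independent $\Z$-basis $\beta_1,\ldots,\beta_r$ (obtained by shrinking $r$ until independence holds). A $\Gamma_0$-admissible completion of $\Sigma$ is a fortiori $\Gamma$-admissible, so it suffices to work with $\Gamma_0$.

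Introduce the auxiliary lattice $\tilde{N}:=N\oplus\Z^r$ and the $\R$-linear injection $\iota\fncolon N_\R\oplus\R\into\tilde{N}_\R$ given by $(w,t)\mapsto(w,t\beta_1,\ldots,t\beta_r)$. The key dictionary is that any $\Gamma_0$-admissible cone $\sigma=\{(w,t)\mid\angbra{u_i,w}+\gamma_i t\geq 0\}$ with $\gamma_i=\sum_j a_{ij}\beta_j$ satisfies $\sigma=\iota^{-1}(\tilde{\sigma})\cap(N_\R\oplus\R_{\geq 0})$, where $\tilde{\sigma}:=\{(w,s)\in\tilde{N}_\R\mid\angbra{u_i,w}+\sum_j a_{ij}s_j\geq 0\}$ is a rational cone; indeed $\iota(w,t)\in\tilde{\sigma}$ iff $\angbra{u_i,w}+t\sum_j a_{ij}\beta_j\geq 0$. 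This identification of $\Gamma_0$-admissible cones with $\iota$-pullbacks of rational cones is the combinatorial heart of the noetherian reduction.

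The three-step plan is then: first, coherently lift the cones of $\Sigma$ to a rational fan $\tilde{\Sigma}$ in $\tilde{N}_\R$ whose $\iota$-pullback recovers $\Sigma$, using a facetwise choice of defining inequalities (so that a shared wall of $\sigma_1,\sigma_2\in\Sigma$, cut out by a single equation $\angbra{u,w}+\gamma t=0$, lifts to a single rational hyperplane in $\tilde{N}_\R$); second, apply the classical completion theorem for rational fans (Ewald) to extend $\tilde{\Sigma}$ to a complete rational fan $\tilde{\Sigma}'$ in $\tilde{N}_\R$, refining further so that the rational subspace $N_\R\oplus\{0\}\subset\tilde{N}_\R$ is a union of cones of $\tilde{\Sigma}'$; and third, take $\iota$-pullbacks and intersect with $\{t\geq 0\}$ to produce a $\Gamma_0$-admissible fan $\Sigma'$ with $|\Sigma'|=N_\R\oplus\R_{\geq 0}$ containing $\Sigma$. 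Injectivity of $\iota$ and the compatibility with $N_\R\oplus\{0\}$ ensure that the pullback and intersection genuinely yield a fan in the half-space.

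The hardest step I anticipate is the coherent lifting: each individual cone admits many rational lifts depending on the chosen defining inequalities, and matching up the lifts across the fan---ensuring that faces lift to faces and pairwise intersections lift to lifts of pairwise intersections---requires care. The resolution is to fix a global choice of defining inequalities indexed by the walls (codimension-one cones) of $\Sigma$, and to verify that the resulting lifts automatically respect the face lattice; this is the combinatorial analog of noetherian reduction advertised in the abstract. Once it is established, steps two and three are direct applications of the classical theory, and Theorem~\ref{thm:NormalTCompletionsExist} follows via the Gubler--Soto correspondence, with the base change in the non-discrete, non-divisible case arising from the need to enlarge $K$ so that the refined slopes produced by the completion remain in the value group.
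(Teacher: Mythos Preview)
Your overall architecture matches the paper's: pass to a finitely generated subgroup $\Gamma_0$, embed $N_\R\times\R_{\geq0}$ into $N_\R\times\R^r$ via $\iota$, lift $\Sigma$ to a rational fan, complete rationally, and pull back. This is exactly Theorem~\ref{thm:CombinatorialNoetherianReduction} followed by Oda's completion result.

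The gap is in your ``coherent lifting'' step, and your proposed fix does not work. You suggest indexing defining inequalities by walls of $\Sigma$ and letting $\tilde\sigma$ be the intersection of the corresponding lifted half-spaces. But a cone in $N_\R\times\R_{\geq0}$ defined by $m$ inequalities lifts to a cone in $N_\R\times\R^r$ defined by the same $m$ inequalities; when $r>1$ these lifts need not be strongly convex, and there is no reason for $\tilde\sigma_1\cap\tilde\sigma_2$ to be a face of either when $\sigma_1\cap\sigma_2$ is not a facet of both. Concretely, if $\sigma_1\cap\sigma_2=\{0\}$, the naive lifts typically meet in a positive-dimensional cone. Wall-indexing only synchronizes the lifts along codimension-one faces; it says nothing about higher-codimension incidences.

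The paper's resolution is different. For each maximal cone $\sigma=L_\sigma^\vee$ one forms $\posloc_B(\widetilde{L_\sigma}):=\pi^{-1}(B)\cap\widetilde{L_\sigma}^\vee$, where $B\subset\R^r$ is a narrow rational cone around $\iota(1)$. Intersecting with $\pi^{-1}(B)$ forces strong convexity, and the choice of $B$ is governed by a strengthened Farkas lemma (Proposition~\ref{prop:FarkasPlus}): every inclusion $L_\sigma^\vee\subset y_0^\vee$ arising from the Separation Lemma is witnessed by a nonnegative \emph{rational} combination, which produces a rational half-space $f^\vee\subset\R^r$ such that the inclusion persists after lifting provided $B\subset f^\vee$ (Lemma~\ref{lemma:ExtendIncl}). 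Taking $B$ inside finitely many such half-spaces simultaneously---plus an invocation of Lemma~\ref{lemma:ThinRationalConeAroundPoint} in the case where $\sigma\cap\tau\subset N_\R\times\{0\}$---is what makes the lifted cones fit into a fan. This is the content of Proposition~\ref{prop:ExtendFan}, and it is where the real work lies.

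Two smaller points. First, your ``$\R$-linearly independent $\Z$-basis $\beta_1,\ldots,\beta_r$'' is a slip: real numbers can be $\R$-linearly independent only when $r\leq1$. You mean $\Q$-linearly independent, which is automatic for a $\Z$-basis of a free subgroup of $\R$ and is all that is needed. Second, the paper first enlarges $\Sigma$ (Lemma~\ref{lemma:ExtendPastHeightZero}) so that no maximal cone lies in $N_\R\times\{0\}$; your proposal does not account for this.
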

\noindent That is, for any $\Gamma$-admissible fan $\Sigma$ in $N_{\R}\times\R_{\geq0}$ there is a complete $\Gamma$-admissible fan $\nbar{\Sigma}$ containing $\Sigma$ as a subfan.

The main tool we use to prove Theorem \ref{thm:AdmissibleCompletionsExist} is the following combinatorial analogue of noetherian reduction, which we believe will be of independent interest and of use in proving other results about $\Gamma$-admissible fans.

\begin{thm}[Combinatorial noetherian reduction]\label{thm:CombinatorialNoetherianReduction}
Let $\Sigma$ be a $\Gamma$-admissible fan in $N_{\R}\times\R_{\geq0}$. For some positive integer $k$ there are a rational fan $\widetilde{\Sigma}$ in $N_{\R}\times\R^k$ and a linear map $\iota\fncolon\R_{\geq0}\to\R^k$ which sends $1$ to a nonzero point of $\Gamma^k$ such that $\Sigma$ is the pullback $(\id_{N_{\R}}\times\iota)^*(\widetilde{\Sigma})$ of $\widetilde{\Sigma}$ along $\id_{N_{\R}}\times\iota$. 

If $\Gamma$ is finitely generated as an abelian group, or, more generally, if the divisible hull $\Q\Gamma$ of $\Gamma$ is finite-dimensional as a $\Q$-vector space, then $k$ and $\iota$ can be chosen independently of $\Sigma$.
\end{thm}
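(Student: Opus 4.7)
The plan is to lift each facet inequality of $\Sigma$ to a rational inequality in a higher-dimensional ambient space, using a $\Z$-basis of the subgroup of $\Gamma$ generated by the relevant $\gamma$-values. Since $\Sigma$ is a finite fan, finitely many elements $\gamma_1, \ldots, \gamma_m \in \Gamma$ appear in the defining inequalities of its cones. The subgroup $\Gamma_0 \subseteq \Gamma$ they generate is a finitely generated subgroup of the torsion-free group $\Gamma$, hence $\Gamma_0 \cong \Z^k$ for some positive integer $k$. Choose a $\Z$-basis $v_1, \ldots, v_k \in \Gamma_0$, and by replacing $v_j$ with $-v_j$ if necessary, arrange that $v_j > 0$ for all $j$. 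Define $\iota \colon \R_{\geq 0} \to \R^k$ by $\iota(t) := (tv_1, \ldots, tv_k)$; then $\iota(1) = (v_1, \ldots, v_k) \in \Gamma^k$ is nonzero.

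For each $\gamma \in \Gamma_0$, write $\gamma = \sum_j a_j(\gamma) v_j$ uniquely with $a_j(\gamma) \in \Z$. For each facet half-space $H = \{(w,t) : \langle u, w\rangle + \gamma t \geq 0\}$ of a cone in $\Sigma$, define its lift $\widetilde{H} := \{(w,s) \in N_\R \times \R^k : \langle u, w\rangle + \sum_j a_j(\gamma) s_j \geq 0\}$, a rational half-space; by direct computation $(\id_{N_\R} \times \iota)^{-1}(\widetilde{H}) = H$. For each cone $\sigma \in \Sigma$, let $\widetilde{\sigma}$ denote the intersection of the lifts of its facet half-spaces together with the orthant $N_\R \times \R^k_{\geq 0}$. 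Positivity of the $v_j$ gives $(\id_{N_\R} \times \iota)^{-1}(N_\R \times \R^k_{\geq 0}) = N_\R \times \R_{\geq 0}$, so the pullback of $\widetilde{\sigma}$ is $\sigma$. Take $\widetilde{\Sigma}$ to be the collection of all $\widetilde{\sigma}$ for $\sigma \in \Sigma$ together with all their faces.

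The key step is to verify that $\widetilde{\Sigma}$ is a fan, i.e., that it is closed under taking faces and that any two cones in it meet in a common face. The face property is built in by definition. For intersections, one must show that $\widetilde{\sigma} \cap \widetilde{\sigma'} = \widetilde{\sigma \cap \sigma'}$ as cones in $N_\R \times \R^k$; this is where I expect the main obstacle to lie. A naive lift without the orthant constraint can fail in exactly this step: with two $\Q$-linearly independent $\gamma$'s, one can produce simple examples where $\widetilde{\sigma} \cap \widetilde{\sigma'}$ has strictly larger dimension than any face of $\widetilde{\sigma}$ would admit. The orthant constraint $N_\R \times \R^k_{\geq 0}$ is what forces the fan property: the inequalities $s_j \geq 0$ serve as $k$ lifted copies of the single boundary inequality $t \geq 0$, and together with the positivity of the $v_j$, they realign the lifted cones along the ray $\iota(\R_{\geq 0})$ so that intersections and common faces are consistent.

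Finally, if $\Q\Gamma$ is finite-dimensional over $\Q$ of dimension $k$, choose a $\Q$-basis $v_1, \ldots, v_k$ of $\Q\Gamma$ inside $\Gamma \cap \R_{>0}$ (possible since $\Gamma$ $\Q$-generates $\Q\Gamma$ and signs can be flipped). Every $\gamma \in \Gamma \subseteq \Q\Gamma$ is uniquely a $\Q$-linear combination of the $v_j$, so the map $\iota$ and the lifting $H \mapsto \widetilde{H}$ are defined without reference to $\Sigma$, yielding the uniform version of the reduction.
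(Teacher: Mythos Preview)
Your overall architecture is right: pass to a finitely generated subgroup $\Gamma_0\cong\Z^k$, pick a basis, and lift each defining inequality $\angbra{u,w}+\gamma t\geq 0$ to a rational inequality in $N_\R\times\R^k$. You also correctly identify that the crux is the fan condition on the lifts. But the claimed fix---intersecting with the orthant $N_\R\times\R^k_{\geq0}$---does not force the intersection property, and the paragraph asserting that it does is exactly where the argument breaks.

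Here is a concrete counterexample. Take $N=\Z$, $v_1=1$, $v_2=\sqrt{2}$, and the two $\Gamma$-admissible cones
\[
\sigma_1=\{(x,t):0\leq x\leq t\},\qquad \sigma_2=\{(x,t):t\leq x\leq \sqrt{2}\,t\},
\]
which meet in the common facet $\{x=t\}$. Lifting the facet inequalities and intersecting with the orthant gives
\[
\widetilde{\sigma}_1=\{(x,s_1,s_2):0\leq x\leq s_1,\ s_2\geq0\},\qquad
\widetilde{\sigma}_2=\{(x,s_1,s_2):s_1\leq x\leq s_2,\ s_1\geq0\}.
\]
Then $\widetilde{\sigma}_1\cap\widetilde{\sigma}_2=\{(s_1,s_1,s_2):0\leq s_1\leq s_2\}$, which is \emph{not} a face of $\widetilde{\sigma}_1$ (the relevant facet of $\widetilde{\sigma}_1$ is $\{x=s_1,\ s_1\geq0,\ s_2\geq0\}$, and the extra constraint $s_1\leq s_2$ cuts out a non-face). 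So your $\widetilde{\Sigma}$ is not a fan.

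The paper's proof replaces your fixed orthant by a rational cone $B\subset\R^k$ containing $\bar\gamma=(v_1,\ldots,v_k)$ that is chosen \emph{after} seeing $\Sigma$, and may be much smaller than the orthant. For each pair of maximal cones $\sigma,\tau$, a Farkas-type argument (Proposition~\ref{prop:FarkasPlus} / Lemma~\ref{lemma:ExtendIncl}) produces finitely many rational linear constraints on $B$ under which the lifted separating hyperplane for $\sigma,\tau$ continues to separate the lifts $\posloc_B(\widetilde{L_\sigma})$ and $\posloc_B(\widetilde{L_\tau})$; one then takes $B$ inside the intersection of all these constraint cones. In the example above this forces $B\subset\{s_1\leq s_2\}$, which the orthant does not satisfy. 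This dependence of $B$ on $\Sigma$, together with Lemma~\ref{lemma:ThinRationalConeAroundPoint} (which uses the archimedean nature of $\R$ to find a thin rational cone around $\bar\gamma$), is the genuine content you are missing; note also that this is precisely why the paper can show in Section~\ref{sec:CompletionsOverOrderedFields} that the analogous statement fails over non-archimedean ordered fields.
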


\noindent See Section \ref{sec:preliminaries} for the definition of the pullback of a fan along an injective linear map.

Theorem \ref{thm:CombinatorialNoetherianReduction} allows us to deduce Theorem \ref{thm:AdmissibleCompletionsExist} from the existence of rational completions of rational fans. This fact was first recorded in the literature by Oda. 
On page 18 of \cite{Oda}, Oda pointed out that Sumihiro's equivariant compactification theorem of \cite{Sumihiro}, together with the classification of normal toric varieties over a field, implies that every rational fan admits a rational completion, and mentioned that at that time no combinatorial construction of such completions was known. 
A sketch of a combinatorial proof was first laid out by Ewald in \cite[III, Theorem 2.8]{Ewald} and then filled in by Ewald and Ishida in \cite{EwaldIshida}. 
More recently, in \cite{Rohrer}, Rohrer presented a simplified and improved combinatorial proof based on the ideas of Ewald and Ishida.

Our proof of Theorem \ref{thm:AdmissibleCompletionsExist} uses only Oda's result that rational completions of rational fans exist rather than using ideas from the later combinatorial proofs thereof. 
In fact, it seems very unlikely that the ideas of these combinatorial proofs could be used to prove Theorem \ref{thm:AdmissibleCompletionsExist}. 
One indication of this is that the aforementioned proofs also show that simplicial rational fans admit simplicial rational completions, whereas for $\Gamma$-admissible fans we have the following result.

\begin{thm}\label{thm:FanWOASimplicialCompletion}
If $\Gamma$ has two $\Q$-linearly independent elements then there are simplicial $\Gamma$-admissible fans which do not have a simplicial $\Gamma$-admissible completion.
\end{thm}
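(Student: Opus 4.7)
The plan is to construct an explicit simplicial $\Gamma$-admissible fan $\Sigma$ with no simplicial $\Gamma$-admissible completion. Fix two $\Q$-linearly independent elements $\gamma_1,\gamma_2\in\Gamma$, take $N=\Z^2$ (in rank $1$ every $2$-dimensional cone is automatically simplicial, so no obstruction can arise there), and work inside $N_\R\times\R_{\geq0}\cong\R^3$. The key structural observation for the analysis is that any simplicial $3$-cone of a $\Gamma$-admissible fan slices at $t=1$ to a polyhedron of exactly one of three types: (a) a triangle with three vertices in $N_\Gamma=\Gamma^2$; (b) an unbounded strip between two vertices of $\Gamma^2$ with one rational recession ray; or (c) a translated simplicial $2$-cone with one vertex in $\Gamma^2$ and two rational recession rays. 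A complete simplicial $\Gamma$-admissible fan therefore corresponds exactly to a subdivision of all of $\R^2$ into pieces of these three types, and our task reduces to producing a partial such subdivision which admits no extension.

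The strategy is to design $\Sigma$ whose slice subdivides a proper region $U\subsetneq\R^2$ with a boundary that imposes irreconcilable constraints on any simplicial, $\Gamma$-admissible filling of $\R^2\smallsetminus U$. The idea is to arrange the bounded boundary edges of $U$ to have slopes involving the irrational ratio $\gamma_1/\gamma_2$, together with prescribed dangling rational recession directions at carefully chosen boundary vertices. When one tries to fill the complement using pieces of types (a), (b), or (c), the matching condition at the boundary of $U$ forces the introduction of either a new bounded edge whose second endpoint is determined by the intersection of two existing boundary lines, or a new unbounded direction determined by the prescribed data. By the $\Q$-linear independence of $\gamma_1$ and $\gamma_2$, the forced new vertex must have coordinates of the form $p_1\gamma_1+p_2\gamma_2$ with rational but non-integer coefficients, or the forced unbounded direction must be a non-rational combination of $\gamma_1,\gamma_2$; neither can lie in $N_\Gamma$ respectively $N_\Q$, contradicting $\Gamma$-admissibility.

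The main obstacle is designing $\Sigma$ so that this arithmetic obstruction is triggered for \emph{every} conceivable simplicial admissible completion, not only for the most obvious filling. A helpful framework is the combinatorial noetherian reduction of Theorem~\ref{thm:CombinatorialNoetherianReduction}: a simplicial $\Gamma$-admissible completion of $\Sigma$ corresponds to a rational completion of the rational lift $\widetilde{\Sigma}$ in $N_\R\times\R^2$ whose pullback along $\id_{N_\R}\times\iota$, with $\iota(1)=(\gamma_1,\gamma_2)$, remains simplicial. The problem thus reduces to exhibiting a simplicial-after-pullback rational fan which admits no rational completion with the same property, a purely combinatorial statement about polyhedral fans sliced by a generic line. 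The verification then proceeds by a finite case analysis at each boundary vertex and boundary edge of $\Sigma$: enumerate the allowable simplicial pieces of types (a), (b), (c) that could share each boundary cell, and show in each case that both matching and admissibility cannot hold simultaneously. The delicate point is ensuring the cases exhaust all possibilities, and the $\Q$-linear independence of $\gamma_1,\gamma_2$ is what rules out every simplicial-admissible extension.
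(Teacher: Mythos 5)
Your proposal is a strategy outline rather than a proof, and it has two genuine gaps. First, the arithmetic obstruction you propose does not work in general: you argue that a forced new vertex would have coordinates $p_1\gamma_1+p_2\gamma_2$ with rational non-integer $p_i$ and hence could not lie in $N_\Gamma$. But the theorem must also hold when $\Gamma$ is divisible (e.g.\ $\Gamma=\Q+\Q\sqrt{2}$ has two $\Q$-linearly independent elements), and for divisible $\Gamma$ every rational combination of $\gamma_1,\gamma_2$ \emph{is} in $\Gamma$; moreover, $\Gamma$-admissibility only constrains the defining half-spaces (normals in $M$, constants in $\Gamma$), so vertices of the slices a priori live in $N_{\Q\Gamma}$, not $N_\Gamma$. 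The obstruction has to be extracted from $\Q$-linear independence itself. The key lemma you are missing is the following: if a triangle in $\R^2$ has all edges of rational slope (which is what $\Gamma$-admissibility forces on the $2$-dimensional faces of the slice), then its three vertices lie in a single coset $v_0+\ell\Q^2$, and this coset is determined by any one edge. This rationality class then propagates through any triangulation that is connected in codimension one, which is the global mechanism that produces a contradiction; a purely local matching analysis at each boundary cell, as you propose, does not obviously terminate, and you explicitly leave the exhaustiveness of the case analysis open. Second, the suggested reduction via Theorem \ref{thm:CombinatorialNoetherianReduction} does not help: pullback along $\id_{N_\R}\times\iota$ does not preserve simpliciality in either direction (indeed the completion produced by that method in the paper has non-simplicial cones), so "simplicial completions of $\Sigma$" do not correspond to "simplicial rational completions of $\widetilde{\Sigma}$."

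For comparison, the paper's proof (Example \ref{example:FanWOASimplicialCompletion}) takes $\Q$-independent $\alpha,\beta\in\Gamma$ with $0<\beta<\alpha<2\beta$ and lets $\Sigma$ be the (simplicial, $2$-dimensional) fan over the boundary edges of the dart with vertices $(0,0)$, $(-3\alpha,0)$, $(0,-3\beta)$, $(\alpha+2\beta,2\alpha+\beta)$. A simplicial $\Gamma$-admissible completion would induce a triangulation of the dart by triangles with rational-slope edges; by the coset lemma and codimension-one connectivity, starting from the edge $\conv((0,0),(-3\alpha,0))$ every vertex would lie in $\alpha\Q^2$, contradicting the presence of the vertex $(0,-3\beta)$. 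You would need to supply both the explicit fan and this coset-propagation argument (or a correct substitute) to complete your approach.
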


We prove Theorem \ref{thm:FanWOASimplicialCompletion} by giving an explicit example of such a fan; see Example \ref{example:FanWOASimplicialCompletion}.

We also show that in Theorem \ref{thm:AdmissibleCompletionsExist} $\R$ cannot be replaced by an arbitrary ordered field $R$. In particular, in Theorem \ref{thm:HigherRankBadFan} we give an explicit example to show that for some ordered fields $R$ there are $R$-admissible fans in $N_{R}\times R_{\geq0}$ which do not have any $R$-admissible completion. In contrast, for any ordered field $R$ every rational fan in $N_{R}$ admits a rational completion; see Proposition \ref{prop:RationalFansInHigherRank}. While our proof of Proposition \ref{prop:RationalFansInHigherRank} uses basic model theory, our proof of Theorem \ref{thm:HigherRankBadFan} is completely elementary.

The structure of the remainder of the paper is as follows. We end the introduction with Example \ref{example:FanWOASimplicialCompletion} in which we prove Theorem \ref{thm:FanWOASimplicialCompletion} and illustrate the method used to prove Theorem \ref{thm:AdmissibleCompletionsExist}. In Section \ref{sec:preliminaries} we set the notations and definitions we will use from polyhedral geometry and prove some elementary and technical results. We then use these in Section \ref{sec:ExistenceOfAdmissibleCompletions} to prove Theorems \ref{thm:AdmissibleCompletionsExist} and \ref{thm:CombinatorialNoetherianReduction}. In Section \ref{sec:CompletionsOverOrderedFields} we consider completions of fans in $N_{R}$ and $N_{R}\times R_{\geq0}$ where $R$ is an arbitrary ordered field and prove the results mentioned in the previous paragraph. Finally, in Section \ref{sec:CompletionsOfTToricVars} we consider $\T$-equivariant completions of $\T$-toric varieties. In particular, we prove Theorem \ref{thm:NormalTCompletionsExist}, give examples of projective $\T$-toric varieties whose normalizations are not of finite type, and show that the $\T$-toric variety corresponding to the fan of Example \ref{example:FanWOASimplicialCompletion} is a semistable $\T$-toric variety which has no semistable $\T$-equivariant completion.

\begin{example}\label{example:FanWOASimplicialCompletion}
Let $\Gamma$ be any additive subgroup of $\R$ which contains two $\Q$-linearly independent elements. So we can find $\alpha,\beta\in\Gamma$ which are $\Q$-linearly independent and both positive. Furthermore, by replacing $\alpha$ and $\beta$ with positive multiples thereof, we can even find such $\alpha$ and $\beta$ with $0<\beta<\alpha<2\beta$. Fix $N=\Z^2$ and consider the $\Gamma$-admissible fan $\Sigma$ in $N_{\R}\times\R_{\geq0}=\R^2\times\R_{\geq0}$ whose maximal cones are
\begin{align*}
\sigma_1:=\{(x,y,t)\in\R^2\times\R_{\geq0}&\mid-3\alpha t\leq x\leq 0, y=0\},\\
\sigma_2:=\{(x,y,t)\in\R^2\times\R_{\geq0}&\mid-3\beta t\leq y\leq 0, x=0\},\\
\sigma_3:=\{(x,y,t)\in\R^2\times\R_{\geq0}&\mid y=2x-3\beta t,0\leq x\leq(\alpha+2\beta)t\},\\
\intertext{and}
\sigma_4:=\{(x,y,t)\in\R^2\times\R_{\geq0}&\mid x=2y-3\alpha t, 0\leq y\leq(2\alpha+\beta)t\}.
\end{align*}
Also, let $\Pi$ be the polyhedral complex, illustrated in Figure \ref{figure:EmptyDart}, consisting of polyhedra $P$ in $\R^2$ such that $P\times\{1\}=\sigma\cap(\R^2\times\{1\})$ for some $\sigma\in\Sigma$.
\begin{figure}[h]
\centering
\begin{tikzpicture}[scale=.25]
\draw (0,0) -- (0,-6) -- (4+pi,2+2*pi) -- (-3*pi,0) --cycle;
\node[below left] at (0,0) {$(0,0)$};
\node[above left] at (-3*pi,0) {$(-3\alpha,0)$};
\node[below right] at (0,-6) {$(0,-3\beta)$};
\node[above] at (4+pi,2+2*pi) {$(\alpha+2\beta,2\alpha+\beta)$};
\end{tikzpicture}
\caption{The polyhedral complex $\Pi$.}
\label{figure:EmptyDart}
\end{figure}
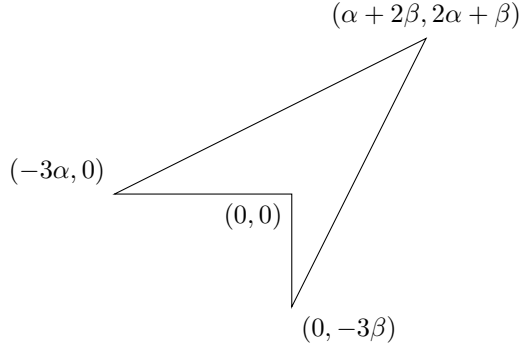

We claim that $\Sigma$ cannot have any simplicial $\Gamma$-admissible completion. To see this, note that such a completion $\nbar{\Sigma}$ would induce a triangulation $\Pi'$ of the dart-shaped non-convex polygon bounded by $|\Pi|$ such that every edge in $\Pi'$ has rational slope. 
Suppose the edges of a triangle $Q\subset\R^2$ all have rational slope, and let $v_0,v_1,$ and $v_2$ be the vertices of $Q$. Let $\ell>0$ be such that $\frac{1}{\ell}(v_1-v_0)$ is a primitive lattice vector. Then $v_0,v_1\in v_0+\ell\Q^2$, and if $v_0,v_1\in v+c\Q^2$ with $v\in\R^2$ and $c\in\R$ then $v+c\Q^2=v_0+\ell\Q^2$. We also have $v_2\in v_0+\ell\Q^2$ because, after shifting and dilating so that $v_0=0$ and $\ell=1$, the coordinates of $v_2$ are given by solving a linear algebra problem over $\Q$. 
Because $\Pi'$ is connected in codimension 1 and $\conv((0,0),(-3\alpha,0))$ is an edge of $\Pi'$, we find that every vertex of $\Pi'$ is in $\alpha\Q^2$. This is impossible since $(0,-3\beta)$ is in $\Pi'$ and $\alpha$ and $\beta$ are $\Q$-linearly independent.

We note that the central idea of the proof above is the same as the idea behind an example of Temkin referred to in \cite[Remark 3.4.2]{TemkinStableModOfRelativeCurves} and \cite[Remark 1.1.1(ii)]{TemkinAlteredUniformization}.

By following our method of proving Theorem \ref{thm:AdmissibleCompletionsExist} we can find some $\Gamma$-admissible completion of $\Sigma$. 
Before applying the aforementioned method, we note that it is not hard to find an extension $\Sigma^*$ of $\Sigma$ such that the induced extension $\Pi^*$ of $\Pi$ has as its support the unbounded region determined by $|\Pi|$. One just needs to choose appropriate rational slopes for rays starting at the vertices of $\Pi$ so that the rays divide this region into polyhedra, let $\Pi^*$ be the polyhedral complex generated by these polyhedra, and let $\Sigma^*$ be the fan over $\Pi^*$. An example of such a $\Pi^*$ is the polyhedral complex generated by the polyhedra labeled $\rho_1,\ldots,\rho_4$ in Figure \ref{figure:CompletedDart}. Thus, to find a completion of $\Sigma$ we only need to find an extension of $\Sigma$ whose induced extension of $\Pi$ has as its support the bounded region determined by $|\Pi|$.

Let $\iota\fncolon\R_{\geq0}\to\R^2$ be the map sending $t\mapsto (t\alpha,t\beta)$. Consider the rational cones 
\begin{align*}
\widetilde{\sigma}_1:=\{(x,y,a,b)\in\R^4&\mid-3a\leq x\leq 0, y=0, \textstyle\frac{1}{2}a\leq b\leq 2a\},\\
\widetilde{\sigma}_2:=\{(x,y,a,b)\in\R^4&\mid-3b\leq y\leq 0, x=0, \textstyle\frac{1}{2}a\leq b\leq 2a\},\\
\widetilde{\sigma}_3:=\{(x,y,a,b)\in\R^4&\mid y=2x-3b,0\leq x\leq a+2b, \textstyle\frac{1}{2}a\leq b\leq 2a\},\\
\intertext{and}
\widetilde{\sigma}_4:=\{(x,y,a,b)\in\R^4&\mid x=2y-3a, 0\leq y\leq 2a+b, \textstyle\frac{1}{2}a\leq b\leq 2a\}
\end{align*}
and note that $(\id_{\R^2}\times\iota)^{-1}(\widetilde{\sigma}_i)=\sigma_i$ for $i=1,2,3,4$. 
By computing generators for $\widetilde{\sigma}_1,\ldots,\widetilde{\sigma}_4$, one can quickly check that $\widetilde{\sigma}_i\cap\widetilde{\sigma}_j=\widetilde{\sigma}_i\cap\vspan(\widetilde{\sigma}_j)$ for all $i$ and $j$, which shows that $\widetilde{\sigma}_1,\ldots,\widetilde{\sigma}_4$ are the maximal cones of a fan $\widetilde{\Sigma}$. 
The defining inequalities of the $\widetilde{\sigma}_i$s show that each $\widetilde{\sigma}_i$ is contained in the half-space $\{(x,y,a,b)\in\R^4\mid b\geq0\}$, and no $\widetilde{\sigma}_i$ is contained in the hyperplane $\{(x,y,a,b)\in\R^4\mid b=0\}$, as can be quickly verified because $(\id_{\R^2}\times\iota)^{-1}(\widetilde{\sigma}_i)=\sigma_i$. Hence the geometry of $\widetilde{\Sigma}$ is completely described by the restriction $\widetilde{\Pi}$ of $\widetilde{\Sigma}$ to the affine hyperplane $H=\{(x,y,a,1)\in\R^4\}$. 
The polyhedral complex $\widetilde{\Pi}$, as seen from two different perspectives, is illustrated in Figure \ref{figure:Picture3DDart}. 

\begin{figure}[h]
\centering
\includegraphics{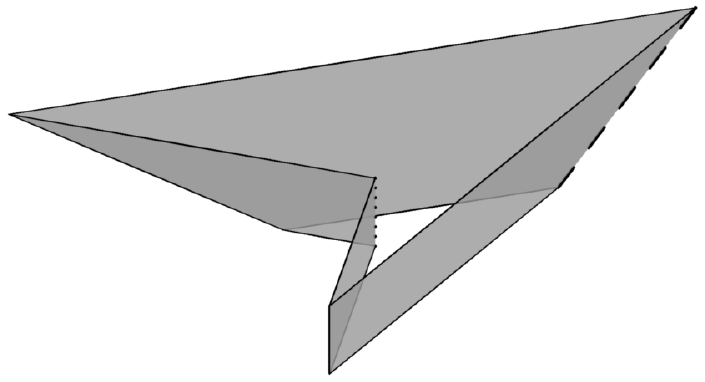}
\includegraphics{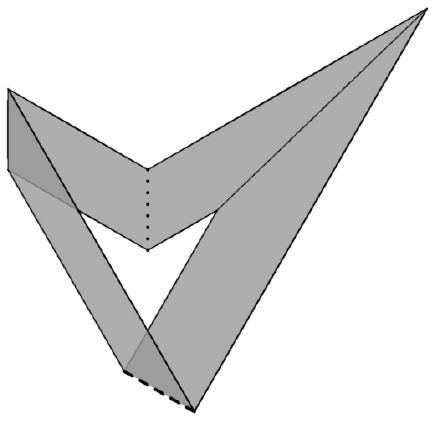}
\caption{The polyhedral complex $\widetilde{\Pi}$ from two different perspectives.}
\label{figure:Picture3DDart}
\end{figure}

We see that $\widetilde{\Pi}$ forms part of the boundary of a nonconvex polytope $\Xi$ whose two other faces are darts. 
We first describe the partial completion $\widetilde{\Sigma}'$ of $\widetilde{\Sigma}$ by describing its restriction $\widetilde{\Pi}'$ to $H$, which is a subdivision of $\Xi$. 
Note that the edges $\widetilde{\sigma}_1\cap\widetilde{\sigma}_2\cap H$ and $\widetilde{\sigma}_3\cap\widetilde{\sigma}_4\cap H$, which are dotted and dashed, respectively, in Figure \ref{figure:Picture3DDart}, are skew. Hence we cannot use a plane containing both of the aforementioned edges to split $\Xi$ into two convex polytopes, because there is no such plane. Instead, we subdivide $\Xi$ as follows. 
First, divide each of the dart-shaped faces of $\Xi$ into two triangles and let $\widetilde{\Pi}''$ be the polyhedral complex generated by the aforementioned triangles and $\widetilde{\Pi}$. 
Then let $\widetilde{\Pi}'$ be the join of $\widetilde{\Pi}''$ with the point $(1,1,1,1)$, which is a polyhedral complex because $\Xi$ is star-shaped around $(1,1,1,1)$. 
Let $\widetilde{\Sigma}'$ be the fan over $\widetilde{\Pi}'$. 

Let $\widetilde{\tau}_1,\ldots,\widetilde{\tau}_8$ be the maximal cones of $\widetilde{\Sigma}'$ labeled so that $\widetilde{\sigma}_i$ is a face of $\widetilde{\tau_i}$ for $i=1,\ldots,4$ and $\widetilde{\tau}_5,\widetilde{\tau}_6,\widetilde{\tau}_7,$ and $\widetilde{\tau}_8$ are simplicial. Let $\tau_i:=(\id_{\R^2}\times\iota)^{-1}(\widetilde{\tau}_i)$ for $i=1,\ldots, 8$. Two of $\tau_5,\tau_6,\tau_7$, and $\tau_8$ are the zero cone $\{(0,0,0)\}$; by relabeling we may take these to be $\tau_7$ and $\tau_8$. The cones $\tau_1,\ldots,\tau_6$ are the maximal cones of a $\Gamma$-admissible fan $\Sigma'$ in $\R^2\times\R_{\geq0}$ which is an extension of $\Sigma$. The desired completion of $\Sigma$ is $\nbar{\Sigma}:=\Sigma^*\cup\Sigma'$. The induced completion of the polyhedral complex $\Pi$ is illustrated in Figure \ref{figure:CompletedDart}, with each maximal face labeled by the corresponding maximal cone of $\nbar{\Sigma}$.
\begin{figure}[h]
\centering
\begin{tikzpicture}[scale=.7]
\draw (0,0) -- (0,-6) -- (4+pi,2+2*pi) -- (-3*pi,0) --cycle;
\fill[gray!10!white] (-3*pi-1,-7) rectangle (4+pi+1,2+2*pi+1);
\draw (-3*pi,0)--(0,0)--(4-pi,4-pi)--(16-7*pi,4-pi)--cycle;
\node at (8-3.5*pi,2-pi/2) {$\tau_1$};
\draw (0,0)--(0,-6)--(4-pi,10-4*pi)--(4-pi,4-pi)--cycle;
\node at (2-pi/2,5-2*pi) {$\tau_2$};
\draw (0,-6)--(4+pi,2+2*pi)--(3*pi-4,4*pi-6)--(4-pi,10-4*pi)--cycle;
\node at (1+3*pi/4,pi/2) {$\tau_3$};
\draw (4+pi,2+2*pi)--(-3*pi,0)--(16-7*pi,4-pi)--(3*pi-4,4*pi-6)--cycle;
\node at (4-1.5*pi,5*pi/4) {$\tau_4$};
\draw (4-pi,4-pi)--(3*pi-4,4*pi-6);
\node at (4-5*pi/4,.5+.5*pi) {$\tau_5$};
\node at (1+pi/4,2-pi/4) {$\tau_6$};
\draw(4+pi,2+2*pi)--(4+pi+1,2+2*pi+1);
\draw(0,-6)--(-.25,-7);
\draw(-3*pi,0)--(-3*pi-1,-.25);
\draw(0,0)--(-7,-7);
\node at (-9*pi/5-9/5,-14.25/5) {$\rho_1$};
\node at (-7.25/4,-20/4) {$\rho_2$};
\node at (13.75/5+3*pi/5,-15/5+4*pi/5) {$\rho_3$};
\node at (7/5-7*pi/5,7.75/5+6*pi/5) {$\rho_4$};
\node[below] at (-1.25,-2.5) {$(0,0)$};
\draw[->, gray] (-1.25,-2.5) -- (-.1,-.2);
\node[left] at (-2.5,-1.25) {$(2\beta-\alpha,2\beta-\alpha)$};
\draw[->, gray] (-2.5,-1.25) -- (4-pi-.2,4-pi-.1);
\node[right] at (0,-6) {$(0,-3\beta)$};
\node[below right] at (4-pi+1,10-4*pi-1) {$(2\beta-\alpha,5\beta-4\alpha)$};
\draw[->, gray] (4-pi+1,10-4*pi-1) -- (4-pi+.1,10-4*pi-.1);
\node[below] at (3*pi-4+1-.1,4*pi-6-2-2) {$(\alpha+2\beta,2\alpha+\beta)$};
\draw[->, gray] (3*pi-4+1-.1,4*pi-6-2-2) -- (4+pi,2+2*pi-.2);
\node[left] at (3*pi-4-2,4*pi-6+1) {$(3\alpha-2\beta,4\alpha-3\beta)$};
\draw[->, gray] (3*pi-4-2,4*pi-6+1) -- (3*pi-4-.2,4*pi-6+.1);
\node[above] at (16-7*pi,4-pi+2) {$(8\beta-7\alpha,2\beta-\alpha)$};
\draw[->, gray] (16-7*pi,4-pi+2) -- (16-7*pi,4-pi+.2);
\node[above] at (-3*pi,0+5) {$(-3\alpha,0)$};
\draw[->, gray] (-3*pi,0+5) -- (-3*pi,0+.3);
\end{tikzpicture}
\caption{The completion of $\Pi$ induced by $\nbar{\Sigma}$.}
\label{figure:CompletedDart}
\end{figure}
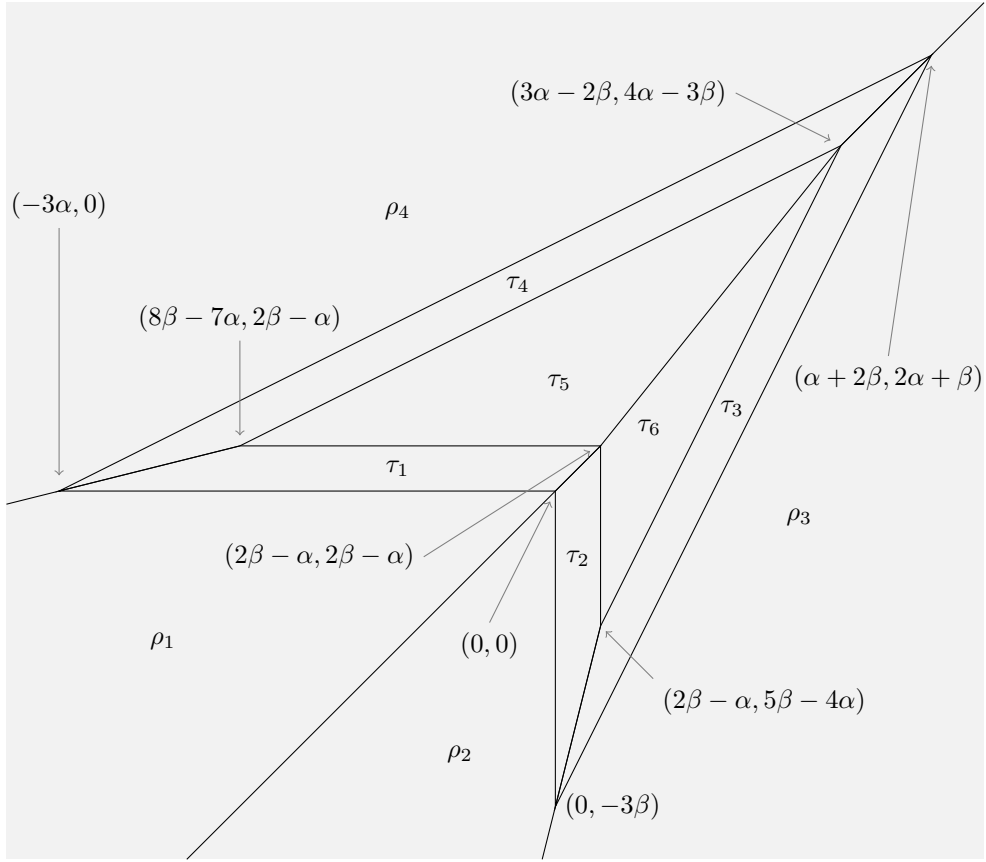
\end{example}

\subsection*{Acknowledgments} 
I am particularly thankful to Sam Payne for his guidance throughout my work on this project. I am also grateful to Daniel Corey, Dhruv Ranganathan, Michael Temkin, and Jeremy Usatine for helpful discussions. I am thankful for the hospitality of the Fields Institute for Research in Mathematical Sciences and to the organizers of the Major Thematic Program on Combinatorial Algebraic Geometry during which the work on this project began. 
I also thank the Mathematical Sciences Research Institute and the organizers of the Birational Geometry and Moduli Spaces program in the Spring 2019 semester during which part of the work on this project was carried out. The work done during that program was supported by NSF Grant DMS-1440140.

%
%

\section{Preliminaries}\label{sec:preliminaries}

In this section we fix notations and establish some basic facts about polyhedra and polyhedral complexes. Instead of considering polyhedra in a vector space over the real numbers $\R$ we will work in the greater generality of polyhedra in a vector space over an arbitrary ordered field $R$. For most of this paper, and in particular for the proofs of Theorems \ref{thm:NormalTCompletionsExist}, \ref{thm:AdmissibleCompletionsExist}, and \ref{thm:CombinatorialNoetherianReduction}, we will only need the cases where $R$ is either $\R$ or the field $\Q$ of rational numbers. 
We will only use the more general setup in Section \ref{sec:CompletionsOverOrderedFields}, which is independent of the rest of the paper.
We note that the usual basic theory of polyhedra works in this full generality with the exception of results about simple or simplicial polyhedra which use the density of $\Q$ in $\R$ to reduce to the case where the polyhedra are rational. 
In particular, all of Chapter 1 and Sections 2.1 through 2.4 of \cite{ZieglerPolytopes} still applies word-for-word when $\R$ is replaced by an arbitrary ordered field $R$.

Let $N$ be a lattice, let $M:=\Hom(N,\Z)$ be its dual lattice, and let $R/S$ be an extension of ordered fields. We let $\pairing$ denote the natural pairing $M\times N\to\Z$, and also its extension $M_{R}\times N_{R}\to R$ where $M_{R}:=M\otimes_{\Z}R$ and $N_{R}:=N\otimes_{\Z}R$. When we consider $N_{S}$ as a subset of $N_{R}$ we will call $N_{S}$ the set of $S$-rational points. A polyhedron in $N_{R}$ is called \emph{$S$-definable} if it can be written as an intersection of finitely many half-spaces of the form $\{w\in N_{R}\mid \angbra{u,w}\geq a\}$ with $u\in M_{S}$ and $a\in S$. A polyhedron which is $\Q$-definable may also be called \emph{rational}. It is clear that given a finite set of inequalities of the form $\angbra{u,w}\geq a$ with $u\in M_S$ and $a\in S$, the polyhedron they define in $N_{S}$ is the same as the set of $S$-rational points of the corresponding $S$-definable polyhedron in $N_{R}$. 
Part (\ref{lemmaPart:PosKLinearSolution}) of the following lemma shows a similar result for the cone generated by finitely many elements of $N_{S}$.

\begin{lemma}\label{lemma:KPointsExist}
Let $R/S$ be an extension of ordered fields.
\begin{enumerate}
\item\label{lemmaPart:KPointPolyhedron} A $S$-definable polyhedron $P$ in $N_{R}$ is nonempty if and only if $P$ contains a $S$-rational point.
\item\label{lemmaPart:PosKLinearSolution} Say $w_0,w_1,\ldots,w_m\in N_{S}$. Then there exist $b_1,\ldots, b_m\in R_{\geq0}$ such that $\sum_{i=1}^m b_iw_i=w_0$ if and only if there exist $b_1',\ldots,b_m'\in S_{\geq0}$ such that $\sum_{i=1}^m b_i'w_i=w_0$. 
\end{enumerate}
\end{lemma}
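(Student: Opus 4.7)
The plan is to deduce part (\ref{lemmaPart:PosKLinearSolution}) from part (\ref{lemmaPart:KPointPolyhedron}), so the main work is part (\ref{lemmaPart:KPointPolyhedron}). For that part, I would argue by induction on the rank of $N$ using Fourier-Motzkin elimination, where the rank-zero base case is vacuous.

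For the inductive step, fix a basis of $N$, let $N'$ be the sublattice spanned by all but the last basis vector, and identify $N_R \cong N'_R \times R$ with $R$-coordinate $t$. The defining inequalities of $P$ take the form $\angbra{u_i', w'} + c_i t \geq a_i$ with $u_i' \in M'_S$ and $c_i, a_i \in S$. Eliminating $t$ via Fourier-Motzkin gives a system cut out by those inequalities with $c_i = 0$ together with, for each pair $(i,j)$ with $c_i > 0$ and $c_j < 0$, the inequality obtained by adding appropriate positive $S$-multiples to cancel $t$. All of these have coefficients and constants in $S$, so they define an $S$-definable polyhedron $P' \subset N'_R$, and standard Fourier-Motzkin theory identifies $P'$ with the image of $P$ under the projection; in particular $P' \neq \emptyset \iff P \neq \emptyset$. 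By the inductive hypothesis $P'$ contains an $S$-rational point $w_0'$. Substituting $w_0'$ into the original inequalities reduces them to constraints of the form $c_i t \geq a_i - \angbra{u_i', w_0'}$ on $t$ with all data in $S$, which defines a nonempty subset of $R$ that is either all of $R$ or a (possibly unbounded) interval with endpoints in $S$; such a subset always contains an $S$-rational point, and lifting yields an $S$-rational point of $P$.

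For part (\ref{lemmaPart:PosKLinearSolution}), the set
\[
Q := \bigl\{ (b_1,\ldots,b_m) \in R^m : b_i \geq 0 \text{ for each } i,\ \textstyle\sum_{i=1}^m b_i w_i = w_0 \bigr\}
\]
is an $S$-definable polyhedron in $R^m$, since the coordinates of $w_0, w_1, \ldots, w_m$ in a basis of $N$ all lie in $S$, making the affine constraints $S$-definable. Applying part (\ref{lemmaPart:KPointPolyhedron}) to $Q$ gives the claim immediately. The main obstacle I anticipate is the careful bookkeeping in the Fourier-Motzkin step: verifying that the elimination really produces the projection $P'$ and that every resulting inequality remains $S$-definable, while correctly accounting for the degenerate situations (such as when every $c_i$ vanishes or when signs of $c_i$ do not pair up). Once this routine but fiddly verification is done, the induction and the reduction of part (\ref{lemmaPart:PosKLinearSolution}) to part (\ref{lemmaPart:KPointPolyhedron}) are both essentially immediate.
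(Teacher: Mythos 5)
Your proof is correct, and for part (\ref{lemmaPart:KPointPolyhedron}) it takes a different route from the paper. The paper argues by contraposition: if $P$ has no $S$-rational point, then the ordered-field version of the Farkas-type infeasibility criterion (\cite[Proposition 1.7]{ZieglerPolytopes} over $S$) produces $c_1,\ldots,c_n\in S_{\geq0}$ with $\sum c_iu_i=0$ and $\sum c_ia_i<0$, and this same certificate, read in $R$, shows $P$ is empty. You instead run Fourier--Motzkin elimination by induction on the rank, observing that elimination preserves $S$-definability and that a nonempty fiber over an $S$-rational point is an interval with endpoints in $S$, hence contains an $S$-point. The two are close in substance --- the infeasibility criterion the paper cites is itself typically proved by Fourier--Motzkin --- but your version is self-contained and constructive (it exhibits the $S$-point directly rather than transferring an infeasibility certificate), at the cost of redoing the elimination bookkeeping that the paper outsources to the reference. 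Your treatment of part (\ref{lemmaPart:PosKLinearSolution}) is identical to the paper's: form the $S$-definable polyhedron of coefficient vectors in $R^m$ and apply part (\ref{lemmaPart:KPointPolyhedron}). One cosmetic point: the rank-zero base case is not vacuous but trivially true, since the only candidate polyhedron is $\{0\}$ or $\emptyset$ and $0$ is $S$-rational.
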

\begin{proof}
(1) The ``if'' direction is trivial, so suppose that $P$ contains no $S$-rational point. Writing $P$ as $P=\{w\in N_{R}\mid(\forall i=1,\ldots,n)\angbra{u_i,w}\leq a_i\}$ with $u_i\in M_S$ and $a_i\in S$, the generalization of \cite[Proposition 1.7]{ZieglerPolytopes} to polyhedra over $S$ gives us that there are $c_1,\ldots,c_n\in S_{\geq0}$ such that $\sum_{i=1}^nc_iu_i=0$ and $\sum_{i=1}^nc_ia_i<0$. Then interpreting the $c_i$s as being in $R_{\geq0}$, the trivial direction of the same proposition shows that $P$ has no $R$-rational points either.

(2) Consider the polyhedron $P:=\{(b_1,\ldots,b_m)\in R^m\mid b_i\geq 0,\sum_{i=1}^mb_iw_i=w_0\}$ in $R^m$. Note that $P$ is $S$-definable as it is the intersection of the cone $(R_{\geq0})^n$ with the $S$-definable affine subspace $\{(b_1,\ldots,b_m)\in R^m\mid\sum_{i=1}^mb_iw_i=w_0\}$. Applying part (1) to $P$ gives the result.
\end{proof}

The proof of Theorem \ref{thm:CombinatorialNoetherianReduction} will use two main technical lemmas. The first of these lemmas is a slight strengthening of a well-known version of the Farkas Lemma which will be deduced from part (\ref{lemmaPart:PosKLinearSolution}) of Lemma \ref{lemma:KPointsExist}. Before proving this lemma we recall a classical form of the Farkas Lemma.

\begin{prop}\label{prop:UsualFarkas}
Let $u_0,u_1,\ldots,u_m\in M_{R}$ and $a_0,a_1,\ldots,a_m\in R$. Suppose that $P:=\{x\in N_{R}\mid(\forall i=1,\ldots,m)\angbra{u_i,x}\geq a_i\}$ is nonempty. Then $\angbra{u_0,x}\geq a_0$ is valid for all $x\in P$ if and only if there exist $c_1,\ldots,c_m\in R_{\geq0}$ such that $\sum_{i=1}^mc_iu_i=u_0$ and $\sum_{i=1}^mc_ia_i\geq a_0$.
\end{prop}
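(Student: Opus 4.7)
The ``if'' direction is immediate: if such $c_i$ exist and $x\in P$, then
$$\angbra{u_0,x}=\sum_i c_i\angbra{u_i,x}\geq\sum_i c_i a_i\geq a_0.$$
For the ``only if'' direction, my plan is to reformulate the conclusion as membership in a polyhedral cone and then apply biduality. Specifically, let $C\subseteq M_R\times R$ be the convex cone generated by $(u_1,a_1),\ldots,(u_m,a_m)$ together with $(0,-1)$; then the existence of $c_i\geq 0$ with $\sum c_iu_i=u_0$ and $\sum c_ia_i\geq a_0$ is equivalent to $(u_0,a_0)\in C$, where the slack $\sum c_ia_i-a_0$ is absorbed by the extra generator $(0,-1)$.

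Using the pairing $\angbra{(u,a),(x,b)}:=\angbra{u,x}+ab$ on $(M_R\times R)\times(N_R\times R)$, the dual cone is
$$C^\vee=\{(x,-s)\in N_R\times R\mid s\geq 0,\ \angbra{u_i,x}\geq a_is\text{ for }i=1,\ldots,m\}.$$
Biduality of polyhedral cones---which follows from Fourier--Motzkin elimination and hence holds over any ordered field, since Chapter~1 of \cite{ZieglerPolytopes} carries over verbatim as noted in the preliminaries---then reduces the task to showing $\angbra{u_0,x}\geq a_0 s$ whenever $s\geq 0$ and $\angbra{u_i,x}\geq a_is$ for all $i$. I would verify this in two cases. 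If $s>0$ then $x/s\in P$, so the hypothesis applied to $x/s$ and rescaling give $\angbra{u_0,x}\geq a_0s$. If $s=0$ then $x$ lies in the recession cone of $P$, so for any $x_0\in P$ (which exists by hypothesis) and any $t\in R_{\geq 0}$ we have $x_0+tx\in P$, yielding $\angbra{u_0,x_0}+t\angbra{u_0,x}\geq a_0$; were $\angbra{u_0,x}$ negative, taking $t$ larger than $(\angbra{u_0,x_0}-a_0)/(-\angbra{u_0,x})$ would give a contradiction, so $\angbra{u_0,x}\geq 0$ as required.

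The main point requiring care is invoking biduality for polyhedral cones in the generality of arbitrary ordered fields rather than $\R$. Since Fourier--Motzkin elimination is a purely algebraic operation and preserves $R$-definability, this is not a genuine obstacle, but it should be explicitly acknowledged. An alternative approach would be to argue by induction on $m$, using the infeasibility form of the Farkas lemma already invoked in the proof of Lemma~\ref{lemma:KPointsExist}(1), but the cone-theoretic reformulation seems cleanest and makes the role of the hypothesis that $P$ is nonempty transparent (it is precisely what lets us handle the case $s=0$ by producing a base point $x_0$).
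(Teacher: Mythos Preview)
Your proof is correct. The paper's own proof is merely a one-line citation: it observes that the statement is equivalent to the extension of \cite[Proposition~1.9]{ZieglerPolytopes} to an arbitrary ordered field, relying on the blanket remark that Chapter~1 of Ziegler goes through verbatim over any ordered field.

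Your argument is therefore more explicit than the paper's. You reformulate the conclusion as membership of $(u_0,a_0)$ in the finitely generated cone $C\subset M_R\times R$, compute $C^\vee$, and reduce to biduality $C=C^{\vee\vee}$, which you correctly note follows from Fourier--Motzkin (i.e., the Weyl--Minkowski theorem) and hence holds over any ordered field. The two-case verification that $(u_0,a_0)$ pairs nonnegatively with every element of $C^\vee$ is clean, and you correctly identify that nonemptiness of $P$ is used precisely in the $s=0$ case to furnish a base point. Both approaches rest on the same foundation---the Fourier--Motzkin machinery of Ziegler's Chapter~1---so there is no real difference in logical strength; the paper simply outsources the argument while you spell it out. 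Your remark about the care needed for biduality over general ordered fields is well placed and is exactly the point the paper's blanket citation is meant to cover.
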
\begin{proof}
This is easily seen to be equivalent to the extension of \cite[Proposition 1.9]{ZieglerPolytopes} to an arbitrary ordered field. Note that in both statements the content is in the ``only if'' direction.
\end{proof}

\begin{prop}\label{prop:FarkasPlus}
Let $S$ be any subfield of $R$. Let $u_0,u_1,\ldots,u_m\in M_{S}$ and $a_0,a_1,\ldots,a_m\in R$. Suppose that $P:=\{x\in N_{R}\mid(\forall i=1,\ldots,m)\angbra{u_i,x}\geq a_i\}$ is nonempty. Then $\angbra{u_0,x}\geq a_0$ is valid for all $x\in P$ if and only if there are $c_1,\ldots,c_m\in S_{\geq0}$ such that $\sum_{i=1}^m c_iu_i=u_0$ and $\sum_{i=1}^m c_ia_i\geq a_0$. 
\end{prop}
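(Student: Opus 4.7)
The plan is to reduce to the usual Farkas Lemma (Proposition \ref{prop:UsualFarkas}) combined with the structural fact that an $S$-definable pointed polyhedron in $R^m$ has all of its vertices in $S^m$. The ``if'' direction is immediate, since if $\sum c_i u_i = u_0$ and $\sum c_i a_i \geq a_0$ with $c_i \geq 0$, then for every $x \in P$ we have $\angbra{u_0, x} = \sum c_i \angbra{u_i, x} \geq \sum c_i a_i \geq a_0$.

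For the ``only if'' direction, I would first apply Proposition \ref{prop:UsualFarkas} to produce $c_1', \ldots, c_m' \in R_{\geq 0}$ with $\sum c_i' u_i = u_0$ and $\sum c_i' a_i \geq a_0$. This shows that the polyhedron
\[
Q := \{c \in R^m \mid c_i \geq 0 \text{ for all } i,\ \textstyle\sum_i c_i u_i = u_0\}
\]
is nonempty. Since $u_0, u_1, \ldots, u_m \in M_S$, the polyhedron $Q$ is $S$-definable, and $Q$ is pointed because $Q \subseteq R^m_{\geq 0}$ has trivial lineality space.

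Next I would observe that the linear functional $g(c) := \sum c_i a_i$ on $R^m$, although its coefficients lie only in $R$, is bounded above on $Q$: fixing any $x \in P$, for every $c \in Q$ we have $g(c) = \sum c_i a_i \leq \sum c_i \angbra{u_i, x} = \angbra{u_0, x}$. Using the decomposition $Q = \conv(\text{vertices of }Q) + \rec(Q)$ available for pointed polyhedra, boundedness of $g$ above on $Q$ forces $g \leq 0$ on $\rec(Q)$, so $\sup_Q g$ is attained at some vertex $c^*$. Because each vertex of an $S$-definable pointed polyhedron in $R^m$ arises as the unique solution in $R^m$ of a rank-$m$ system of $S$-linear equations coming from its active constraints, its coordinates lie in $S$. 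Hence $c^* \in S^m_{\geq 0}$ satisfies $\sum c_i^* u_i = u_0$ and $g(c^*) = \sup_Q g \geq g(c') \geq a_0$, giving the required representation.

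The main conceptual worry is that $g$, having coefficients only in $R$, might be maximized only at an $R$-rational point of $Q$ rather than at an $S$-rational one; but on a pointed polyhedron a bounded linear functional always attains its maximum at a vertex, so the $S$-rationality of the vertices transfers to some maximizer. The hypothesis that $P$ is nonempty enters exactly once, to guarantee that $g$ is bounded above on $Q$; without it $g$ could be unbounded on a nontrivial direction of $\rec(Q)$ and the argument would collapse.
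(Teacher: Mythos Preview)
Your proof is correct and takes a genuinely different (dual) route from the paper's. The paper works on the primal side: it locates the face $F$ of $P$ on which $u_0$ attains its minimum $\delta\geq a_0$, observes that any Farkas certificate $(c_i')$ coming from Proposition~\ref{prop:UsualFarkas} must be supported on the set $I$ of constraints tight along $F$, and then invokes part~(\ref{lemmaPart:PosKLinearSolution}) of Lemma~\ref{lemma:KPointsExist} to replace the $c_i'\in R_{\geq0}$ by $c_i\in S_{\geq0}$ with the same support and the same linear relation $\sum_{i\in I}c_iu_i=u_0$; the equality $\sum_{i\in I}c_ia_i=\delta$ then drops out by evaluating $u_0$ at any point of $F$. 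You instead pass to the dual polyhedron $Q\subset R^m$ of all nonnegative certificates, maximize the (possibly $R$-coefficient) functional $g(c)=\sum c_ia_i$ over it, and use that a linear functional bounded above on a pointed polyhedron attains its supremum at a vertex, together with the $S$-rationality of vertices of $S$-definable polyhedra. Your argument is a bit more streamlined and avoids analyzing the face structure of $P$, at the cost of invoking the vertex/recession-cone decomposition of $Q$; the paper's argument is slightly more explicit about which $c_i$ may be nonzero and stays closer to the tool (Lemma~\ref{lemma:KPointsExist}) already established in the preliminaries.
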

\begin{proof}
The ``if'' direction is clear.

For the ``only if'' direction, note that $u_0$ is bounded below on the polyhedron $P$, so there is some (nonempty) face $F$ of $P$ on which $u_0$ is minimized. Let $\delta:=u_0(F)$, so $\delta\geq a_0$ and $\angbra{u_0,x}\geq \delta$ is valid for all $x\in P$. By Proposition \ref{prop:UsualFarkas} there are $c_1',\ldots,c_m'\in R_{\geq0}$ such that $\sum_{i=1}^mc_i'u_i=u_0$ and $\sum_{i=1}^m c_i'a_i\geq\delta$. Moreover, because $\angbra{u_0,F}=\delta$ we have that $\sum_{i=1}^mc_i'a_i=\delta$ and letting $I:=\{i\in\{1,\ldots,m\}\mid\angbra{u_i,F}=a_i\}$ we get $\{i\in\{1,\ldots,m\}\mid c_i'\neq0\}\subset I$. So by part (\ref{lemmaPart:PosKLinearSolution}) of Lemma \ref{lemma:KPointsExist} there are $c_i\in S_{\geq0}$ for $i\in I$ such that $\sum_{i\in I}c_iu_i=u_0$. Furthermore, for any $x\in F$ we have $\delta=\angbra{u_0,x}=\sum_{i\in I}c_i\angbra{u_i,x}=\sum_{i\in I}c_ia_i$. So letting $c_i=0$ for $i\notin I$, we are done.
\end{proof}

Before stating the second technical lemma needed in the proof of Theorem \ref{thm:CombinatorialNoetherianReduction} we review some notation. 
For $u\in M_{R}$ let $u^\vee:=\{w\in N_{R}\mid \angbra{u,w}\geq0\}$ and $u^\perp:=\{w\in N_{R}\mid \angbra{u,w}=0\}$. 
Extending this to sets, for $L\subset M_R$ we have $L^\vee:=\dcap_{u\in L}u^\vee$ and $L^\perp:=\dcap_{u\in L}u^\perp$. These definitions immediately extend to any case where we have a pairing $\pairing$ between two sets taking values in some ordered abelian group. A \emph{rational cone} in $N_{R}$ is a cone which can be written as $L^\vee$ for some finite $L\subset M\subset M_{R}$.

\begin{lemma}\label{lemma:ThinRationalConeAroundPoint}
Let $u\in M$ and say $w\in N_{\R}$ is in $u^\vee\sdrop u^\perp$. Then there is a rational cone $\sigma\subset N_{\R}$ such that $w\in \sigma$, $\sigma\subset u^\vee$, and $\sigma\cap u^\perp=\{0\}$.
\end{lemma}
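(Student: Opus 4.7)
The plan is to construct $\sigma$ as the cone spanned by the vertices of a sufficiently small rational polytope lying on a rational affine hyperplane transverse to the ray $\R_{\geq 0}\cdot w$. The key observation is that, once such a rational polytope is found, $u$ is \emph{constant and positive} on it, which automatically forces $\sigma\cap u^\perp=\{0\}$.

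First, I would set $c:=\angbra{u,w}$, which is strictly positive by hypothesis, and consider the affine hyperplane $H:=\{v\in N_\R\mid\angbra{u,v}=1\}$. To see that $H$ is nonempty and that $H\cap N_\Q$ is dense in $H$, write $u=du_0$ with $u_0\in M$ primitive and $d\in\Z_{>0}$, and pick $v_0\in N$ with $\angbra{u_0,v_0}=1$. Then $v_0/d\in H\cap N_\Q$, and $H$ is the translate $v_0/d+(\ker u)_\R$ of a rational linear subspace of $N_\R$; since $\Q$-points are dense in any rational linear subspace, the rational points of $H$ are dense in $H$.

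Next, note that $w':=c^{-1}w$ lies in $H$. Using density, I would choose finitely many rational points $p_1,\ldots,p_k\in H\cap N_\Q$ whose convex hull contains $w'$—for instance, the vertices of a small rational simplex inside $H$ enclosing $w'$, which exists because $H\cap N_\Q$ is dense in $H$ and being in the interior of a simplex is an open condition.

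Finally, I set $\sigma:=\R_{\geq0}p_1+\cdots+\R_{\geq0}p_k$, which is a rational polyhedral cone by construction. For any $x=\sum t_ip_i\in\sigma$ with $t_i\geq0$, since $p_i\in H$ one computes $\angbra{u,x}=\sum t_i\geq0$, with equality if and only if all $t_i$ vanish, i.e., $x=0$; this yields simultaneously $\sigma\subset u^\vee$ and $\sigma\cap u^\perp=\{0\}$. Moreover $w=cw'$ lies in $\sigma$ because $w'\in\conv(p_1,\ldots,p_k)$. The only mild obstacle is verifying density of rational points in the affine hyperplane $H$, which reduces to the elementary observation above after writing $u$ as a multiple of a primitive character; the rest is linear algebra and requires no appeal to the earlier Farkas-type results.
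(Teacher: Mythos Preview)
Your proof is correct and follows essentially the same approach as the paper: rescale $w$ to lie on the rational affine hyperplane $H=\{z:\angbra{u,z}=1\}$, enclose the rescaled point in a rational polytope $P\subset H$, and take $\sigma$ to be the cone over $P$. The only difference is cosmetic: the paper extends $u$ to a $\Q$-basis $u_1=u,u_2,\ldots,u_n$ of $M_\Q$ and takes $P$ to be an explicit rational box $\{z:\angbra{u,z}=1,\ \angbra{u_i,z}\in I_i\}$, whereas you obtain $P$ as a rational simplex via a density argument. Your computation $\angbra{u,\sum t_ip_i}=\sum t_i$ is a clean way to verify all three conclusions at once.
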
\begin{proof}
Because $w\in u^\vee\sdrop u^\perp$, we must have $u\neq 0$, so we can extend $u$ to a basis $\{u_1=u,u_2,\ldots,u_n\}$ of $M_{\Q}$. 
Let $w'=\frac{1}{\angbra{u,w}}w$, so $w'\in\{z\in N_{\R}\mid \angbra{u,z}=1\}$. By picking a closed rational interval $I_i$ containing $\angbra{u_i,w'}$ for each $i\in\{2,\ldots,n\}$ we get a rational $(n-1)$-box $P:=\{z\in N_{\R}\mid \angbra{u,z}=1,\angbra{u_i,z}\in I_i(\forall i\in\{2,\ldots,n\})\}$ containing $w'$. Because $P$ is a rational polytope in $\{z\in N_{\R}\mid \angbra{u,z}=1\}$, we have that $\sigma:=\{tz\mid t\in\R_{\geq0}, z\in P\}$ is a rational cone contained in $u^\vee$ with $\sigma\cap u^\perp=\{0\}$. Finally, $\sigma$ is a cone and contains $w'$, so it also contains the positive multiple $w$ of $w'$.
\end{proof}

Lemma \ref{lemma:ThinRationalConeAroundPoint} is false if $\R$ is replaced by an arbitrary ordered field.

\begin{example}
Let $R$ be a non-archimedean ordered field, i.e., an ordered field such that there is some $\omega\in R$ with $\omega>n$ for all $n\in\N$. Let $M=\Z^2$ and fix $u=(1,0)\in M$ and $w=(\eps,1)\in N_{R}$ where $\eps=1/\omega$. We have $0<\eps<\frac{1}{n}$ for all positive integers $n$ and so $0<\eps<q$ for all positive $q\in\Q$. In particular, $w\in u^\vee\sdrop u^\perp$. Consider a rational cone $\sigma$ in $N_R$ such that $w\in \sigma\subset u^\vee$. Because $\sigma$ is rational and contained in $u^\vee$, $\{t\in R\mid (t,1)\in \sigma\}$ is either of the form $\{t\in R\mid a\leq t\leq b\}$ for some nonnegative $a,b\in\Q$ or of the form $\{t\in R\mid a\leq t\}$ for some nonnegative $a\in\Q$. In either case, because $(\eps,1)\in \sigma$ and $\eps<q$ for all positive $q\in\Q$ we must have $a=0$. So we have $(0,1)\in \sigma\cap u^\perp$ and hence $\sigma\cap u^\perp\neq\{0\}$.
\end{example}

If $P$ is a polyhedron in $N_R$ and $F$ is a face of $P$ then we will write $F\leq P$. Recall that a \emph{polyhedral complex} $\Pi$ in $N_R$ is a finite collection of polyhedra in $N_R$ such that 
\begin{enumerate}
\item if $P\in\Pi$ and $F\leq P$ then $F\in\Pi$ and
\item if $P,Q\in\Pi$ then $P\cap Q$ is either empty or a face of both $P$ and $Q$.
\end{enumerate}
The polyhedra $P\in \Pi$ are called the faces of $\Pi$. The relation $\leq$ makes $\Pi$ into a partially ordered set. Let $\Pi_{\max}$ denote the set of maximal faces of $\Pi$. From (1) we know that $\Pi$ is determined by $\Pi_{\max}$ via $\Pi=\dcup_{P\in\Pi_{\max}}\Face(P)$, where, for any polyhedron $P$, $\Face(P)$ is the set of faces of $P$. The following lemma classifies which sets of polyhedra can generate a polyhedral complex in this way. Although the statement is well-known to experts, we were not able to easily find a reference to it in the literature, and so include a brief proof.

\begin{lemma}\label{lemma:ComplexFromBigPolyhedra}
Let $\Phi$ be a finite set of polyhedra in $N_{R}$. Suppose that for any $P,Q\in\Phi$, $P\cap Q$ is either empty or a face of both $P$ and $Q$. Then $\dcup_{P\in\Phi}\Face(P)$ is a polyhedral complex.
\end{lemma}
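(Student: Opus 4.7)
The plan is to verify the two axioms of a polyhedral complex for $\Pi:=\dcup_{P\in\Phi}\Face(P)$. Axiom (1), closure under taking faces, will follow immediately from the transitivity of the face relation: given $F\leq G\in\Pi$, pick any $P\in\Phi$ with $G\leq P$; then $F\leq P$ as well, so $F\in\Face(P)\subset\Pi$.

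The substantive work is axiom (2). Given $F,G\in\Pi$, I would fix $P,Q\in\Phi$ with $F\leq P$ and $G\leq Q$, and argue that $F\cap G$ is empty or a common face. The hypothesis on $\Phi$ gives this at the level of $P$ and $Q$: either $P\cap Q=\emptyset$, in which case $F\cap G\subset P\cap Q$ is also empty, or $R:=P\cap Q$ is a face of both $P$ and $Q$. In the latter case the plan is to show $F\cap G$ is a face of $R$, and then transfer this statement up to $F$ and $G$.

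To show $F\cap G\leq R$, I would write $F\cap Q=F\cap R$ (using $F\subset P$) and note that $F$ and $R$ are both faces of $P$, so $F\cap R\leq P$; since this intersection lies inside the face $R$ of $P$, the standard fact that a face of a polyhedron contained in one of its faces is a face of that face gives $F\cap R\leq R$. Symmetrically, $G\cap R\leq R$. Their intersection is $(F\cap R)\cap(G\cap R)=F\cap G$ (because $F\cap G\subset P\cap Q=R$), and an intersection of two faces of $R$ is a face of $R$. Then, since $R\leq P$, transitivity yields $F\cap G\leq P$; since $F\cap G\subset F$ and both are faces of $P$, the face-in-face principle once more gives $F\cap G\leq F$. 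The symmetric argument using $Q$ gives $F\cap G\leq G$.

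There is no real obstacle beyond careful bookkeeping of which ambient polyhedron each intersection is a face of; the three standard facts invoked (intersections of faces are faces, a face contained in a face is a face of that face, and faces of faces are faces) hold for polyhedra over any ordered field, as noted at the start of Section \ref{sec:preliminaries}.
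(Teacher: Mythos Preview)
Your proposal is correct and follows essentially the same approach as the paper's proof. The only cosmetic difference is that the paper works explicitly with supporting hyperplanes (writing $\sigma'=\sigma\cap u_\sigma^\perp$, etc.) to exhibit $\sigma'\cap\tau'$ directly as an intersection of two faces of $\sigma\cap\tau$, whereas you package the same computation into the abstract face-lattice facts you list; unwinding your $F\cap R$ and $G\cap R$ recovers exactly the paper's $\sigma\cap\tau\cap u_\sigma^\perp$ and $\sigma\cap\tau\cap u_\tau^\perp$.
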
\begin{proof}
We prove this lemma in the case when $\Phi$ consists of cones, which is the case in which we will use the result. The proof in the general case is analogous. 
Let $\Sigma=\dcup_{\sigma\in\Phi}\Face(\sigma)$. Obviously every face of any cone in $\Sigma$ is again in $\Sigma$. For $\sigma',\tau'\in\Sigma$ there are $\sigma,\tau\in\Phi$ such that $\sigma'\leq\sigma$ and $\tau'\leq\tau$. In particular there are $u_\sigma,u_\tau\in M_R$ such that $\sigma\subset u_\sigma^\vee$, $\sigma'=\sigma\cap u_\sigma^\perp$, $\tau\subset u_\tau^\vee$, and $\tau'=\tau\cap u_\tau^\perp$. Now 
\begin{align*}
\sigma'\cap\tau'&=(\sigma\cap u_\sigma^\perp)\cap(\tau\cap u_\tau^\perp)\\
&=(\sigma\cap\tau\cap u_\sigma^\perp)\cap(\sigma\cap\tau\cap u_\tau^\perp)
\end{align*}
with $\sigma\cap\tau$ contained in both $u_\sigma^\vee$ and $u_\tau^\vee$, so we get that $\sigma'\cap\tau'$ is a face of $\sigma\cap\tau$ because an intersection of faces of a cone is again a face of the same cone. Because $\sigma\cap\tau$ is a face of both $\sigma$ and $\tau$ we get that $\sigma'\cap\tau'$ is a face of both $\sigma$ and $\tau$. So $\sigma'\cap\tau'$ is a face of $\sigma$ which is contained in the face $\sigma'$ of $\sigma$, so $\sigma'\cap\tau'$ is a face of $\sigma'$. Similarly, we find that $\sigma'\cap\tau'$ is a face of $\tau'$.
\end{proof}

The \emph{support} of a polyhedral complex $\Pi$ is $|\Pi|:=\dcup_{P\in\Pi}P$. If $\Pi$ is a polyhedral complex in $N_{R}$ then $\Pi$ is called \emph{complete} if $|\Pi|=N_{R}$.

Recall that a \emph{generalized fan} in $N_{R}$ is a nonempty polyhedral complex consisting of cones. A cone is called \emph{strongly convex} or \emph{pointed} if it contains no line. A \emph{fan} is a nonempty polyhedral complex consisting of strongly convex cones or, equivalently, a generalized fan containing the cone $\{0\}$. A fan is \emph{rational} if it consists of rational cones.

A fan in $N_R\times R_{\geq0}$ is a fan in $N_{R}\times R$ which is supported in the half-space $N_{R}\times R_{\geq0}$. A fan $\Sigma$ in $N_R\times R_{\geq0}$ is called \emph{complete} if $|\Sigma|=N_R\times R_{\geq0}$. 
Let $\Gamma$ be an additive subgroup of $R$. 
A cone in $N_{R}\times R_{\geq0}$ is called \emph{$\Gamma$-admissible} if it is strongly convex and can be written as $L^\vee$ for a finite set $L\subset M\times\Gamma$ where we use the extended pairing $\pairing\fncolon(M\times\Gamma)\times(N_R\times R_{\geq0})\to R$ given by $\angbra{(u,\gamma),(w,t)}=\angbra{u,w}+\gamma t$. 
If we replace $M\times\Gamma$ with $M_{\Q}\times\Q\Gamma$ we get the same notion of $\Gamma$-admissible cones because we can clear denominators from any inequality of the form $\angbra{\frac{u}{r},w}+t\frac{\gamma}{s}\geq0$ with $u\in M, \gamma\in\Gamma$ and $r,s\in\Z_{>0}$ to get the equivalent inequality $\angbra{su,w}+tr\gamma\geq0$. 
A polyhedron in $N_{R}$ is called \emph{$\Gamma$-rational} if it can be written as an intersection of finitely many half-spaces of the form $\{w\in N_{R}\mid\angbra{u,w}\geq\gamma\}$ for some $u\in M$ and $\gamma\in\Gamma$. A strongly convex cone $\sigma$ in $N_{R}\times R_{\geq0}$ which meets $N_{R}\times\{1\}$ is $\Gamma$-admissible if and only if the polyhedron $\{w\in N_{R}\mid (w,1)\in\sigma\}$ is $\Gamma$-rational. 
If $\sigma$ does not meet $N_R\times\{1\}$ then $\sigma$ is contained in $N_{R}\times\{0\}$ in which case $\sigma$ is $\Gamma$-admissible if and only if $\sigma$ is rational. 
A fan in $N_{R}\times R_{\geq0}$ is called \emph{$\Gamma$-admissible} if it consists of $\Gamma$-admissible cones.
Part (\ref{lemmaPart:SeparationOfCones}) of the following lemma shows that any pair of cones in a $\Gamma$-admissible fan can be separated by a hyperplane given as the vanishing locus of some $u\in M\times\Gamma$.

\begin{lemma}[Separation Lemma]\label{lemma:AdmissibleSeparationLemma}
\ 
\begin{enumerate}
\item\label{lemmaPart:SeparationOfPolyhedra} Let $P$ and $Q$ be $\Gamma$-rational polyhedra in $N_{R}$ such that $P\cap Q$ is either empty or a face of both $P$ and $Q$. Then there are $u\in M$ and $\gamma\in\Gamma$ such that $\angbra{u,w}\geq\gamma$ for $w\in P$, $\angbra{u,w}\leq\gamma$ for $w\in Q$, and $P\cap \{w\in N_{R}\mid\angbra{u,w}=\gamma\}=P\cap Q=Q\cap\{w\in N_{R}\mid \angbra{u,w}=\gamma\}$.
\item\label{lemmaPart:SeparationOfCones} Let $\sigma$ and $\tau$ be $\Gamma$-admissible cones in $N_{R}\times R_{\geq0}$ such that $\sigma\cap\tau$ is a face of both $\sigma$ and $\tau$. Then there is $y\in M\times\Gamma$ such that $\sigma\subset y^\vee$, $\tau\subset (-y)^\vee$, and $\sigma\cap y^\perp=\sigma\cap\tau=\tau\cap y^\perp$.
\end{enumerate}
\end{lemma}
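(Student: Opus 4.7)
The plan is to establish part (\ref{lemmaPart:SeparationOfPolyhedra}) via a tangent-cone argument and then reduce part (\ref{lemmaPart:SeparationOfCones}) to it by slicing at $t=1$.

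For part (\ref{lemmaPart:SeparationOfPolyhedra}) with $F:=P\cap Q$ nonempty, pick $x_0$ in the relative interior of $F$. The key observation is that tangent cones of $\Gamma$-rational polyhedra are actually $\Q$-rational (cut out by the same normal vectors $u_i\in M$ that define $P$ and $Q$, but with right-hand sides shifted to $0$). The intersection $T_{x_0}(P)\cap T_{x_0}(Q)$ equals the common lineality $L$, the $\Q$-rational direction subspace of the affine hull of $F$. Applying the classical rational separation lemma to the pointed rational cones $T_{x_0}(P)/L$ and $T_{x_0}(Q)/L$ in the quotient $N_R/L$ produces some $u\in L^\perp\cap M$ separating the two tangent cones with equality locus exactly $L$ on each; the convexity inclusions $P-x_0\subset T_{x_0}(P)$ and $Q-x_0\subset T_{x_0}(Q)$ then promote this to the desired separation of $P$ and $Q$, with equality loci equal to $F$ on each since $F$ is a face of both.

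The main obstacle will be that the constant $\gamma:=\angbra{u,x_0}$ lies only in $\Q\Gamma$ a priori. To force $\gamma\in\Gamma$, note that the affine hull of $F$ is itself $\Gamma$-rational with a description $\{w\mid\angbra{u_{i_s},w}=\gamma_{i_s}\}$ where $\{u_{i_s}\}$ is a $\Q$-basis of $L^\perp\cap M_\Q$ drawn from the defining inequalities of $P$ tight on $F$ and each $\gamma_{i_s}\in\Gamma$. Expanding $u=\sum c_s u_{i_s}$ with $c_s\in\Q$ gives $\gamma=\sum c_s\gamma_{i_s}\in\Q\Gamma$, and replacing $u$ by $Ku$ for $K$ the common denominator of the $c_s$ lands both $u$ and $\gamma$ in $M$ and $\Gamma$ while preserving all inequality directions and equality loci. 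The case $F=\emptyset$ is handled by strict separation: a rational $u\in M$ comes from the $\Q$-rational cone $\rec(P)^\vee\cap(-\rec(Q))^\vee$, and density of $\Gamma$ in $R$ (non-cyclic case) or a further scaling of $u$ (cyclic case, to widen the gap past the generator) locates a $\gamma\in\Gamma$ strictly between $\max_Q\angbra{u,\cdot}$ and $\min_P\angbra{u,\cdot}$.

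For part (\ref{lemmaPart:SeparationOfCones}) the plan is a case analysis on which of $\sigma$, $\tau$, and $\rho$ meet $N_R\times\{1\}$. If neither $\sigma$ nor $\tau$ meets $N_R\times\{1\}$, both are rational cones and classical separation gives $y=(u,0)\in M\times\Gamma$. If both $\sigma,\tau$ and $\rho$ meet $N_R\times\{1\}$, applying part (\ref{lemmaPart:SeparationOfPolyhedra}) to the slice polyhedra $P:=\{w\mid(w,1)\in\sigma\}$ and $Q:=\{w\mid(w,1)\in\tau\}$ yields $(u,\gamma)$, and the lift $y:=(u,-\gamma)$ works: cone-scaling $(w,t)\mapsto(w/t,1)$ handles $t>0$ and the identity $\rec(P)\cap u^\perp=\rec(F)=\rec(P)\cap\rec(Q)$ handles $t=0$. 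The hardest subcase is when $\sigma,\tau$ meet $N_R\times\{1\}$ but $\rho\subset N_R\times\{0\}$: the slices $P,Q$ are disjoint while $\rec(P),\rec(Q)$ share a common face $C$ with $\rho=C\times\{0\}$, and I would combine a strict separator $(u_0,\gamma_0)\in M\times\Gamma$ of $P,Q$ with a classical rational separator $u_1\in M$ of $\rec(P),\rec(Q)$ along $C$, taking $y:=(u_1+Ku_0,-K\gamma_0)$ for sufficiently large positive integer $K$. The automatic vanishing of $u_0$ on $C$ (because $u_0\geq 0$ on $\rec(P)$ and $\leq 0$ on $\rec(Q)$, so $=0$ on $C$) ensures the combined functional still cuts out $C$ on each recession cone. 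The remaining mixed case (exactly one of $\sigma,\tau$ contained in $N_R\times\{0\}$) is handled by an analogous but simpler combined construction.
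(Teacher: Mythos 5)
Your route is genuinely different from the paper's. For part (\ref{lemmaPart:SeparationOfPolyhedra}) the paper simply invokes the proof of Gubler's separation lemma \cite[Lemma 7.8]{GublerGuideTrop}, which already outputs $u\in M$ together with a $\Q\Gamma$-rational point $w_0$ on the separating hyperplane, and then clears denominators in $\gamma=\angbra{u,w_0}$; your tangent-cone/quotient argument is a self-contained replacement, and your mechanism for landing $\gamma$ in $\Gamma$ (expressing $u$ over the tight $\Gamma$-rational constraints and scaling) is correct. For part (\ref{lemmaPart:SeparationOfCones}) the paper uses three cases, treating ``both cones meet $N_R\times\{1\}$'' uniformly by slicing and homogenizing; your finer split according to whether $\sigma\cap\tau$ meets $N_R\times\{1\}$, with the combined functional $y=(u_1+Ku_0,-K\gamma_0)$ when the slices are disjoint but the recession cones share a positive-dimensional face, is sound and in fact makes explicit a point that the slice-and-homogenize recipe handles only because the cited lemma's separator is already compatible with the recession cones. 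Your remaining cases match the paper's.

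There is one step that fails as justified: in the empty-intersection case of part (\ref{lemmaPart:SeparationOfPolyhedra}) you locate $\gamma$ in the gap $(\sup_Q\angbra{u,\cdot},\inf_P\angbra{u,\cdot})$ by appealing to ``density of $\Gamma$ in $R$'' when $\Gamma$ is not cyclic. Over an arbitrary ordered field $R$ this is false: take $R$ non-archimedean with an infinitesimal $t$ and $\Gamma=\Z+\Z t$, which is non-cyclic but meets no interval of infinitesimal length such as $(t,2t)$ --- and such intervals do arise as gaps between $\Gamma$-rational polyhedra (e.g.\ $Q=\{w\le t\}$, $P=\{w\ge 2t\}$). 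Since the lemma is stated for arbitrary $(R,\Gamma)$ and Section \ref{sec:CompletionsOverOrderedFields} of the paper is precisely about such fields, this needs the same repair you already use elsewhere: both optimal values are attained and lie in $\Q\Gamma$ (by Proposition \ref{prop:FarkasPlus} with $S=\Q$, the optimum equals $\sum c_i\gamma_i$ with $c_i\in\Q_{\ge0}$ and $\gamma_i\in\Gamma$), so the midpoint lies in $\Q\Gamma$ and one clears denominators by replacing $(u,\gamma)$ with $(mu,m\gamma)$. With that substitution your argument goes through.
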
\begin{proof}
(\ref{lemmaPart:SeparationOfPolyhedra}) As shown in the proof of \cite[Lemma 7.8]{GublerGuideTrop} there are $u\in M$ and $w_0\in N_{\Q\Gamma}$ such that $P\subset u^\vee+w_0$, $Q\subset(-u)^\vee+w_0$, and $P\cap(u^\perp+w_0)=P\cap Q=Q\cap(u^\perp+w_0)$. Letting $\gamma=\angbra{u,w_0}\in\Q\Gamma$ we have
\begin{align*}
w\in u^\vee+w_0&\text{ if and only if }\angbra{u,w}\geq\gamma,\\
w\in u^\perp+w_0&\text{ if and only if }\angbra{u,w}=\gamma,\\
\intertext{and}
w\in (-u)^\vee+w_0&\text{ if and only if }\angbra{u,w}\leq\gamma.
\end{align*}
The result follows upon clearing denominators to get $\gamma\in\Gamma$.

(\ref{lemmaPart:SeparationOfCones}) We consider three cases as to the positions of the cones. If both $\sigma$ and $\tau$ meet $N_{R}\times\{1\}$ then the claim follows by applying part (1) to the $\Gamma$-rational polyhedra $P$ and $Q$ in $N_R$ such that $P\times\{1\}=\sigma\cap(N_R\times\{1\})$ and $Q\times\{1\}=\tau\cap(N_R\times\{1\})$ and homogenizing. 
If $\sigma$ and $\tau$ are both contained in $N_R\times\{0\}$ then by considering $\sigma$ and $\tau$ as rational cones in $N_R$ we can get such a $y\in M\times\{0\}$ by \cite[Lemma 1.2.13]{CLS}. 
Now suppose that one of $\sigma$ or $\tau$ is contained in $N_R\times\{0\}$ and the other is not. We may assume without loss of generality that $\sigma\subset N_R\times\{0\}$ while $\tau\not\subset N_R\times\{0\}$. Because $\sigma\subset N_R\times\{0\}$ we must have that $\Gamma$ is nontrivial. Let $\sigma_0$ and $\tau_0$ be the intersections of $\sigma$ and $\tau$, respectively, with $N_{R}\times\{0\}$, viewed as rational cones in $N_R$. By \cite[Lemma 1.2.13]{CLS}, there is $u\in M$ with $\sigma_0\subset u^\vee$, $\tau_0\subset(-u)^\vee$, and $\sigma_0\cap u^\perp=\sigma_0\cap\tau_0=\tau_0\cap u^\perp$. Letting $P$ be the $\Gamma$-rational polyhedron in $N_R$ such that $P\times\{1\}=\tau\cap(N_R\times\{1\})$ we have that the recession cone of $P$ is $\rec P=\tau_0$. Since $\tau_0\subset (-u)^\vee$ this gives us that $u$ is bounded above on $P$. The maximum $\gamma$ of $u$ on $P$ is achieved at some vertex $v$ of $P$, and because $v\in N_{\Q\Gamma}$ this gives that $\gamma\in\Q\Gamma$. Let $m$ be a positive integer such that $m\gamma\in\Gamma$ and let $\delta$ be any element of $\Gamma$ which is greater than $m\gamma$. 
Letting $y=(mu,-\delta)$ we have $y^\vee\cap(N_R\times\{0\})=(mu,0)^\vee\cap(N_R\times\{0\})=(u,0)^\vee\cap(N_R\times\{0\})\supset\sigma$ and $\sigma\cap y^\perp=\sigma\cap(N_R\times\{0\})\cap y^\perp=\sigma\cap(u,0)^\perp=\sigma\cap\tau$. 
Because $\angbra{mu,x}<\delta$ for $x\in P$ we have $\tau\subset(-y)^\vee$ and $\tau\cap(-y)^\perp\subset N_R\times\{0\}$ and so also $\tau\cap y^\perp=\tau\cap y^\perp\cap (N_R\times\{0\})=\tau\cap (u,0)^\perp\cap (N_R\times\{0\})=\tau\cap\sigma$.
\end{proof}

If $\Gamma$ is not the trivial group then every face of a $\Gamma$-admissible cone is $\Gamma$-admissible. On the other hand, every $\{0\}$-admissible cone is of the form $\sigma\times\R_{\geq0}$ for some rational cone $\sigma\subset N_R$ and so has the face $\sigma\times\{0\}$ which is not $\{0\}$-admissible. In particular, there are no $\{0\}$-admissible fans.

A \emph{linear map} $\ph\fncolon N_{R}\times R_{\geq0}\to W$ with $W$ an $R$-vector space is a function which is the restriction of a linear map $N_R\times R\to W$ to the half-space $N_{R}\times R_{\geq0}$.

\begin{defi}
Let $W$ be an $R$-vector space and let $\ph\fncolon N_{R}\times R_{\geq0}\to W$ be an injective linear map. For any fan $\Sigma$ in $W$, the \emph{pullback of $\Sigma$ along $\ph$} is the fan
$$\ph^*(\Sigma):=\{\ph^{-1}(\sigma),(\ph|_{N_{R}\times\{0\}})^{-1}(\sigma)\mid\sigma\in\Sigma\}.$$
\end{defi}

To see that $\ph^*(\Sigma)$ is a fan note that, because $\ph$ gives a linear isomorphism $N_{R}\times R_{\geq0}\to\im(\ph)$, it suffices to show that $\Delta:=\{\sigma\cap\im(\ph),\sigma\cap\ph(N_{R}\times\{0\})\mid\sigma\in\Sigma\}$ is a fan. But $\Delta$ is the common refinement of $\Sigma$ and the generalized fan $\{\im(\ph),\ph(N_{R}\times\{0\})\}$, and so is a generalized fan containing the zero cone, i.e., a fan.

In the cases where this notion will be of use to us $W$ will be of the form $N_{R}\times R^k$ and $\Sigma$ will be a rational fan. 
In order to consider rational cones in $N_{R}\times R^k$ we use the pairing $\pairing\fncolon(M_\Q\times\Q^k)\times(N_R\times R^k)\to R$ given as $\angbra{(u,f),(w,\xi)}=\angbra{u,w}+f\cdot \xi$ where $f\cdot \xi$ denotes the usual dot product on $R^k$. We also use the dot product itself as a pairing $\pairing\fncolon\Q^k\times R^k\to R$. 

The features of the linear map in the conclusion of Theorem \ref{thm:CombinatorialNoetherianReduction} are designed to match the hypotheses of the following lemma.

\begin{lemma}\label{lemma:PullbackOfRationalIsAdmissible}
Assume that $\Gamma\neq\{0\}$ and let $\iota\fncolon R_{\geq0}\to R^k$ be a linear map which sends $1$ to a nonzero point of $\Gamma^k$. For any rational fan $\Sigma$ in $N_R\times R^k$, the pullback $(\id_{N_{R}}\times\iota)^*(\Sigma)$ of $\Sigma$ along $\id_{N_{R}}\times\iota$ is a $\Gamma$-admissible fan in $N_{R}\times R_{\geq0}$.
\end{lemma}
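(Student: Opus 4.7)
The plan is to verify both defining properties of $\Gamma$-admissibility---strong convexity together with a finite description by half-spaces of the form $\{(w,t):\langle u,w\rangle+\gamma t\geq 0\}$ with $u\in M$ and $\gamma\in\Gamma$---for each cone of $\varphi^*(\Sigma)$, where $\varphi:=\id_{N_R}\times\iota$.  The discussion immediately following the definition of pullback already establishes that $\varphi^*(\Sigma)$ is a fan, so only the $\Gamma$-admissibility of its individual cones remains to be checked.

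Every cone of $\varphi^*(\Sigma)$ has one of the two forms $\varphi^{-1}(\sigma)$ or $(\varphi|_{N_R\times\{0\}})^{-1}(\sigma)$ for some $\sigma\in\Sigma$.  Since $\Sigma$ is rational, I can write $\sigma=L^\vee$ for some finite $L\subset M_\Q\times\Q^k$.  Set $\gamma_0:=\iota(1)\in\Gamma^k\sdrop\{0\}$, so that the linear extension $\widetilde{\varphi}\fncolon N_R\times R\to N_R\times R^k$ of $\varphi$ is $(w,t)\mapsto(w,t\gamma_0)$.  Each defining inequality $\langle u,w\rangle+f\cdot(t\gamma_0)\geq 0$ of $\widetilde{\varphi}^{-1}(\sigma)$ then reads $\langle u,w\rangle+(f\cdot\gamma_0)t\geq 0$ with $f\cdot\gamma_0\in\Q\Gamma$; clearing denominators turns it into an equivalent inequality with coefficients in $M\times\Gamma$.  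To encode the remaining constraint $t\geq 0$ cutting $N_R\times R_{\geq 0}$ out of $N_R\times R$, I pick any $\gamma_1\in\Gamma_{>0}$ (which exists because some coordinate of $\gamma_0$ is a nonzero element of $\Gamma$) and append $\gamma_1 t\geq 0$; this presents $\varphi^{-1}(\sigma)$ as $L'^\vee$ for a finite $L'\subset M\times\Gamma$.  For $(\varphi|_{N_R\times\{0\}})^{-1}(\sigma)$ I append both $\gamma_1 t\geq 0$ and $-\gamma_1 t\geq 0$ to force $t=0$ and obtain a description of the same form.

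Strong convexity of each such cone is inherited from $\sigma$: because $\gamma_0\neq 0$, the map $\widetilde{\varphi}$ is injective and hence carries lines to lines, so the preimage of the strongly convex cone $\sigma$ in $N_R\times R$ is strongly convex, and so are its further intersections with $N_R\times R_{\geq 0}$ and with $N_R\times\{0\}$.  The only subtle point in the argument is the use of the hypothesis $\gamma_0\in\Gamma^k$ (forcing $\Gamma\neq\{0\}$) to rewrite the inequality $t\geq 0$ defining $N_R\times R_{\geq 0}$ in the form $\gamma t\geq 0$ with $\gamma\in\Gamma$; everything else is a direct polyhedral computation, and I do not anticipate a serious obstacle.
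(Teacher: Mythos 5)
Your proposal is correct and follows essentially the same route as the paper: pull back each defining functional $(u,f)$ of a rational cone along $\id_{N_R}\times\iota$ to get $(u,f\cdot\iota(1))$ with $f\cdot\iota(1)\in\Q\Gamma$ (then clear denominators), and cite the earlier discussion for the fan property. Your extra steps—explicitly treating the cones $(\varphi|_{N_R\times\{0\}})^{-1}(\sigma)$, verifying strong convexity, and encoding $t\geq0$ via some $\gamma_1\in\Gamma_{>0}$—are harmless elaborations of points the paper handles implicitly via its conventions and the remark that faces of $\Gamma$-admissible cones are $\Gamma$-admissible.
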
\begin{proof}
We already know that $(\id_{N_{R}}\times\iota)^*(\Sigma)$ is a fan, so we just have to check that if $\sigma$ is a rational cone in $N_{R}\times R^k$, $(\id_{N_{R}}\times\iota)^{-1}(\sigma)$ is a $\Gamma$-admissible cone. To see this, note that if we write $\sigma=\{(u_1,f_1),\ldots,(u_m,f_m)\}^\vee$ with $u_1,\ldots,u_m\in M$ and $f_1,\ldots,f_m\in\Z^k$ then $(\id_{N_{R}}\times\iota)^{-1}(\sigma)=\{(u_1,\iota^*(f_1)),\ldots,(u_m,\iota^*(f_m))\}^\vee$. The $\Gamma$-admissibility of $(\id_{N_{R}}\times\iota)^{-1}(\sigma)$ follows because for $f\in\Z^k$, $\iota^*(f)=\angbra{\iota^*(f),1}=\angbra{f,\iota(1)}\in\Gamma$. 
\end{proof}
%
%

\section{Existence of $\Gamma$-admissible completions}\label{sec:ExistenceOfAdmissibleCompletions}

In this section we prove Theorems \ref{thm:AdmissibleCompletionsExist} and \ref{thm:CombinatorialNoetherianReduction}. Throughout this section $\Gamma$ will be a nontrivial additive subgroup of $\R$. We start by showing that Theorem \ref{thm:AdmissibleCompletionsExist} follows from Theorem \ref{thm:CombinatorialNoetherianReduction}.

\begin{prop}
Theorem \ref{thm:CombinatorialNoetherianReduction} implies Theorem \ref{thm:AdmissibleCompletionsExist}.
\end{prop}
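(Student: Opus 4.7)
The plan is to combine Theorem \ref{thm:CombinatorialNoetherianReduction} with Oda's result that every rational fan admits a rational completion. Given a $\Gamma$-admissible fan $\Sigma$ in $N_{\R}\times\R_{\geq0}$, I would first apply Theorem \ref{thm:CombinatorialNoetherianReduction} to produce a positive integer $k$, a linear map $\iota\fncolon\R_{\geq0}\to\R^k$ sending $1$ to a nonzero point of $\Gamma^k$, and a rational fan $\widetilde{\Sigma}$ in $N_{\R}\times\R^k$ such that $\Sigma=(\id_{N_{\R}}\times\iota)^*(\widetilde{\Sigma})$. Then I would invoke Oda's theorem to extend $\widetilde{\Sigma}$ to a complete rational fan $\widetilde{\Sigma}'$ in $N_{\R}\times\R^k$, and set $\nbar{\Sigma}:=(\id_{N_{\R}}\times\iota)^*(\widetilde{\Sigma}')$.

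It then remains to verify three properties of $\nbar{\Sigma}$. First, $\nbar{\Sigma}$ is $\Gamma$-admissible: this is immediate from Lemma \ref{lemma:PullbackOfRationalIsAdmissible}, since $\iota(1)$ is a nonzero point of $\Gamma^k$. Second, $\Sigma$ is a subfan of $\nbar{\Sigma}$: the pullback operation $\widetilde{\Delta}\mapsto(\id_{N_{\R}}\times\iota)^*(\widetilde{\Delta})$ is monotone with respect to inclusion of fans, because for every $\sigma\in\widetilde{\Sigma}\subset\widetilde{\Sigma}'$ both $(\id_{N_{\R}}\times\iota)^{-1}(\sigma)$ and $((\id_{N_{\R}}\times\iota)|_{N_{\R}\times\{0\}})^{-1}(\sigma)$ appear in both pullbacks. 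Third, $\nbar{\Sigma}$ is complete: for any $(w,t)\in N_{\R}\times\R_{\geq0}$, completeness of $\widetilde{\Sigma}'$ produces some $\sigma\in\widetilde{\Sigma}'$ containing $(w,\iota(t))=(\id_{N_{\R}}\times\iota)(w,t)$, so $(w,t)\in(\id_{N_{\R}}\times\iota)^{-1}(\sigma)\in\nbar{\Sigma}$.

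I do not expect any real obstacle here, since all of the difficulty has been absorbed into Theorem \ref{thm:CombinatorialNoetherianReduction} and into the input from Oda. The present implication is a short formal passage from the rational setting (where completions are known to exist) to the $\Gamma$-admissible setting via the noetherian reduction map, with Lemma \ref{lemma:PullbackOfRationalIsAdmissible} ensuring that $\Gamma$-admissibility is preserved under pullback.
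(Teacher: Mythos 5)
Your proposal is correct and follows exactly the paper's own argument: pull back Oda's rational completion of $\widetilde{\Sigma}$ along $\id_{N_{\R}}\times\iota$, and verify $\Gamma$-admissibility via Lemma \ref{lemma:PullbackOfRationalIsAdmissible}, containment via monotonicity of the pullback, and completeness via surjectivity of the preimage of the support. No substantive differences.
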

\begin{proof}
Theorem \ref{thm:CombinatorialNoetherianReduction} tells us that there are a positive integer $k$, a rational fan $\widetilde{\Sigma}$ in $N_{\R}\times\R^k$, and a linear map $\iota\fncolon\R_{\geq0}\to\R^k$ which sends $1$ to a nonzero point of $\Gamma^k$ such that $\Sigma=(\id_{N_\R}\times\iota)^*(\widetilde{\Sigma})$. 
By \cite[page 18]{Oda} there is a complete rational fan $\widetilde{\Sigma}'$ in $N_{\R}\times\R^k$ containing $\widetilde{\Sigma}$. 
Let $\nbar{\Sigma}:=(\id_{N_\R}\times\iota)^*(\widetilde{\Sigma}')$. Since $\iota(1)\in\Gamma^k\sdrop\{0\}$ and $\nbar{\Sigma}$ is the pullback of a rational fan along $\id_{N_\R}\times\iota$, Lemma \ref{lemma:PullbackOfRationalIsAdmissible} gives us that $\nbar{\Sigma}$ is a $\Gamma$-admissible fan in $N_{\R}\times\R_{\geq0}$. Because $\widetilde{\Sigma}'$ contains $\widetilde{\Sigma}$ and $(\id_{N_\R}\times\iota)^*(\widetilde{\Sigma})=\Sigma$, $\nbar{\Sigma}=(\id_{N_\R}\times\iota)^*(\widetilde{\Sigma}')$ contains $\Sigma$. Finally, $|\nbar{\Sigma}|=(\id_{N_\R}\times\iota)^{-1}(|\widetilde{\Sigma}'|)=(\id_{N_\R}\times\iota)^{-1}(N_{\R}\times\R^k)=N_{\R}\times\R_{\geq0}$, so $\nbar{\Sigma}$ is complete.
\end{proof}

Thus our focus in the remainder of this section will be on proving Theorem \ref{thm:CombinatorialNoetherianReduction}. 
Our first step will be to reduce the proof to the following special case.

\begin{prop}\label{prop:ExtendFan}
Let $\Gamma$ be nontrivial subgroup of $\R$ such that $\Q\Gamma$ is finite dimensional as a $\Q$-vector space. There are a positive integer $k$ and a linear map $\iota\fncolon\R_{\geq0}\to\R^k$ sending $1$ to a nonzero point of $\Gamma^k$ which satisfy the following condition. For any $\Gamma$-admissible fan $\Sigma$ in $N_{\R}\times\R_{\geq0}$ such that no maximal cone of $\Sigma$ is contained in $N_{\R}\times\{0\}$ there is a rational fan $\widetilde{\Sigma}$ in $N_{\R}\times\R^k$ with $\Sigma=(\id_{N_{\R}}\times\iota)^*(\widetilde{\Sigma})$.
\end{prop}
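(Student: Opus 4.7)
The plan is to fix $k$ and $\iota$ once and for all from $\Gamma$, and then for each admissible $\Sigma$ build $\widetilde\Sigma$ by lifting the defining inequalities of each maximal cone $\sigma\in\Sigma$ to rational inequalities in $N_\R\times\R^k$ and intersecting with a fixed auxiliary rational pointed cone to ensure the lifts are strongly convex and glue correctly. The Separation Lemma, Farkas in its $\Q$-coefficient form, and $\Q$-linearity of the lift do the work.

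For the setup, put $k:=\dim_\Q\Q\Gamma$ and choose a $\Q$-basis $\gamma_1,\ldots,\gamma_k\in\Gamma$ of $\Q\Gamma$ (clearing denominators). Define $\iota(t):=t(\gamma_1,\ldots,\gamma_k)$, so $\iota(1)\in\Gamma^k\sdrop\{0\}$. The inverse basis map extends to a $\Q$-linear ``lift'' $\widetilde{(\cdot)}\fncolon M_\Q\times\Q\Gamma\to M_\Q\times\Q^k$, $(u,\gamma)\mapsto(u,\ell(\gamma))$, and the crucial compatibility $\angbra{\widetilde y,(w,\iota(t))}=\angbra{y,(w,t)}$ holds for every $y\in M\times\Gamma$. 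Finally, Lemma \ref{lemma:ThinRationalConeAroundPoint}, applied to any $u\in\Z^k$ with $u\cdot\iota(1)>0$, yields a full-dimensional rational pointed cone $C\subset\R^k$ containing $\iota(1)$ in its interior; let $\mathcal{C}\subset\Q^k$ be a finite defining set for $C$.

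For the construction, enumerate the maximal cones of $\Sigma$ as $\sigma_1,\ldots,\sigma_n$. For each $i\neq j$, Lemma \ref{lemma:AdmissibleSeparationLemma}(\ref{lemmaPart:SeparationOfCones}) yields a separator $y_{ij}\in M\times\Gamma$ with $\sigma_i\subset y_{ij}^\vee$, $\sigma_j\subset(-y_{ij})^\vee$, and $\sigma_i\cap y_{ij}^\perp=\sigma_i\cap\sigma_j=\sigma_j\cap y_{ij}^\perp$; arrange $y_{ji}=-y_{ij}$. Pick a finite defining set $\mathcal{L}_i\subset M\times\Gamma$ for each $\sigma_i$, enlarged to contain all $y_{ij}$ with $j\neq i$, and set
\[
\widetilde\sigma_i:=\Bigl(\widetilde{\mathcal{L}}_i\cup\{(0,c)\mid c\in\mathcal{C}\}\Bigr)^\vee\subset N_\R\times\R^k,\qquad \widetilde\Sigma:=\bigcup_{i=1}^n\Face(\widetilde\sigma_i).
\]

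Two checks are short. Each $\widetilde\sigma_i$ is pointed: any line it contains lies in $N_\R\times(C\cap-C)=N_\R\times\{0\}$ by pointedness of $C$, and the $\widetilde{\mathcal{L}}_i$-inequalities then force it into the trivial lineality of $\sigma_i$. The pullback identity $(\id_{N_\R}\times\iota)^{-1}(\widetilde\sigma_i)=\sigma_i$ is immediate from the compatibility identity together with $\iota(\R_{\geq 0})\subset C$. The main obstacle is showing that $\widetilde\sigma_i\cap\widetilde\sigma_j$ is a common face of $\widetilde\sigma_i$ and $\widetilde\sigma_j$ for $i\neq j$. One direction is free: from $\widetilde\sigma_i\subset\widetilde y_{ij}^\vee$ and $\widetilde\sigma_j\subset(-\widetilde y_{ij})^\vee$, the intersection lies in the face $\widetilde\sigma_i\cap\widetilde y_{ij}^\perp$ of $\widetilde\sigma_i$. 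For the reverse, given $v$ in this face and any $u\in\mathcal{L}_j$, the separation identity $\sigma_i\cap y_{ij}^\perp=\sigma_i\cap\sigma_j\subset u^\vee$ lets Proposition \ref{prop:FarkasPlus} in its $S=\Q$ form (this is where the strengthening over $\Q$ is essential, since $\R$-coefficients would not survive the lift) produce $c_\ell,a,b\in\Q_{\geq 0}$ with $u=\sum c_\ell y_\ell+(a-b)y_{ij}$ and $y_\ell\in\mathcal{L}_i$. The $\Q$-linear lift transports this identity unchanged to $\widetilde u=\sum c_\ell\widetilde y_\ell+(a-b)\widetilde y_{ij}$, and evaluating at $v$---where $\widetilde y_{ij}(v)=0$ and $\widetilde y_\ell(v)\geq 0$---gives $\widetilde u(v)\geq 0$. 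Quantifying over $u\in\mathcal{L}_j$ and using $v\in N_\R\times C$ places $v\in\widetilde\sigma_j$, so $\widetilde\sigma_i\cap\widetilde y_{ij}^\perp=\widetilde\sigma_i\cap\widetilde\sigma_j$; a symmetric argument completes the common-face property. Lemma \ref{lemma:ComplexFromBigPolyhedra} then promotes $\widetilde\Sigma$ to a fan, and the pullback identity $(\id_{N_\R}\times\iota)^*(\widetilde\Sigma)=\Sigma$ follows by combining the maximal-cone computation with the easy observation that $(\id\times\iota)^{-1}(\widetilde c^\perp)=N_\R\times\{0\}$ for each $c\in\mathcal{C}$, so the auxiliary $C$-inequalities contribute only faces of $\Sigma$ already present along $t=0$.
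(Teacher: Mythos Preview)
There is a genuine gap in your Farkas step. Proposition~\ref{prop:FarkasPlus} does \emph{not} yield the equality $u=\sum_\ell c_\ell y_\ell+(a-b)y_{ij}$ that you claim. Writing $y_\ell=(u_\ell,\alpha_\ell)$ and $u=(u_0,\alpha_0)$ and applying the proposition to the height-$1$ polyhedron, you obtain rational $c_\ell\geq0$ with $\sum_\ell c_\ell u_\ell=u_0$ in $M_\Q$, but only the \emph{inequality} $\sum_\ell c_\ell\alpha_\ell\leq\alpha_0$ on the $\Gamma$-components. The slack $\alpha_0-\sum_\ell c_\ell\alpha_\ell$ is a nonnegative element of $\Q\Gamma$ whose lift $f\in\Q^k$ satisfies $f\cdot\iota(1)\geq0$; transporting the identity under $\widetilde{(\cdot)}$ leaves an extra term $(0,f)$, and what your argument actually needs to conclude $\widetilde u(v)\geq0$ is $C\subset f^\vee$. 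Since $f$ depends on the particular inclusion being treated, and hence on $\Sigma$, the auxiliary cone cannot be fixed in advance of $\Sigma$ as you do.

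Here is a concrete failure. Take $N=\Z$ and $\Gamma=\Z+\Z\sqrt2$, so $k=2$ and $\iota(1)=(1,\sqrt2)$. Whatever full-dimensional rational pointed cone $C$ you fix with $\iota(1)$ in its interior, choose positive integers $n,m$ with $n/m<\sqrt2$ close enough to $\sqrt2$ that the open half-space $\{(s_1,s_2)\mid ms_2<ns_1\}$ meets the interior of $C$, and set $B=1-n+m\sqrt2>1$. Let $\sigma_1,\sigma_2$ be the cones over $[0,1]$ and $[1,B]$ at height~$1$, with separator $y_{12}=(-1,1)$ and the minimal defining sets (which already contain $\pm y_{12}$). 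Then $\widetilde\sigma_1\cap\widetilde y_{12}^\perp=\{(s_1,s_1,s_2)\mid(s_1,s_2)\in C,\ s_1\geq0\}$, while membership in $\widetilde\sigma_2$ requires the functional $\widetilde{(-1,B)}=(-1,1-n,m)$ to be nonnegative, i.e.\ $ms_2\geq ns_1$. By construction this fails on part of $C$ with $s_1>0$, so $\widetilde\sigma_1\cap\widetilde\sigma_2$ is a proper, full-dimensional subset of the face $\widetilde\sigma_1\cap\widetilde y_{12}^\perp$ and hence not a face of $\widetilde\sigma_1$; your $\widetilde\Sigma$ is not a fan.

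The paper's proof follows exactly your outline but lets the auxiliary cone depend on $\Sigma$: Lemma~\ref{lemma:ExtendIncl} records, for each needed inclusion $L^\vee\subset y_0^\vee$, the half-space $f^\vee$ coming from the Farkas slack, and the cone $B$ is then taken inside the finite intersection of all these half-spaces. Your argument also tacitly assumes that $\sigma_i\cap\sigma_j$ meets $N_\R\times\R_{>0}$, since otherwise the height-$1$ polyhedron is empty and Proposition~\ref{prop:FarkasPlus} does not apply; the paper handles that case separately, using a strict separator together with a further application of Lemma~\ref{lemma:ThinRationalConeAroundPoint} to force $\widetilde\sigma_i\cap\widetilde\sigma_j$ down into $N_\R\times\{0_{\R^k}\}$.
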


In order to deduce Theorem \ref{thm:CombinatorialNoetherianReduction} from Proposition \ref{prop:ExtendFan} we need the following lemma.

\begin{lemma}\label{lemma:ExtendPastHeightZero}
Let $\Sigma$ be a $\Gamma$-admissible fan in $N_\R\times\R_{\geq0}$ and let $\sigma\in\Sigma$ be a maximal cone contained in $N_{\R}\times\{0\}$. Then $\sigma$ is a face of of a $\Gamma$-admissible cone $\sigma'$ not contained in $N_{\R}\times\{0\}$ such that $\Face(\sigma')\cup\Sigma$ is a fan.
\end{lemma}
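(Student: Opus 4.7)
The plan is to construct $\sigma'$ as the Minkowski sum $\sigma+\R_{\geq 0}(w_0,1)$ for a suitable $w_0\in N_{\Q\Gamma}$. Writing $\sigma=\tau_0\times\{0\}$ for a rational cone $\tau_0\subset N_\R$, the slice $\sigma'\cap(N_\R\times\{1\})=\tau_0+w_0$ is $\Q\Gamma$-rational (hence $\Gamma$-rational), and $\sigma'$ is strongly convex because any line in $\sigma'$ must lie in $\sigma'\cap(N_\R\times\{0\})=\sigma$, which has none. Thus $\sigma'$ is $\Gamma$-admissible and is not contained in $N_\R\times\{0\}$, and the functional $(0,\gamma)\in M\times\Gamma$ (for any $\gamma\in\Gamma_{>0}$) cuts out $\sigma$ as a face of $\sigma'$. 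The real work is to choose $w_0$ so that $\Face(\sigma')\cup\Sigma$ is a fan.

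For each $\tau\in\Sigma\sdrop\{\sigma\}$, Lemma \ref{lemma:AdmissibleSeparationLemma}(2) supplies $y_\tau=(u_\tau,\gamma_\tau)\in M\times\Gamma$ with $\sigma\subset y_\tau^\vee$, $\tau\subset(-y_\tau)^\vee$, and $\sigma\cap y_\tau^\perp=\sigma\cap\tau=\tau\cap y_\tau^\perp$. Maximality of $\sigma$ rules out $\sigma\subset\tau$, so $\sigma\cap y_\tau^\perp$ is a proper face of $\sigma$; combined with $u_\tau\in\tau_0^\vee$ this forces $u_\tau\notin\tau_0^\perp$, and hence $\angbra{u_\tau,p}>0$ for every $p\in\relint(\tau_0)$. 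Using density of $\Q\Gamma$ in $\R$, I would choose $p'\in\relint(\tau_0)\cap N_{\Q\Gamma}$ (which is nonempty whenever $\tau_0\neq\{0\}$; the case $\tau_0=\{0\}$ forces $\Sigma=\{\sigma\}$ and so has nothing to verify), and then take a sufficiently small positive rational $\eps$ so that $w_0:=p'/\eps\in N_{\Q\Gamma}$ satisfies the finite system $\angbra{u_\tau,w_0}+\gamma_\tau>0$ for every $\tau$.

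This strict inequality $\angbra{y_\tau,(w_0,1)}>0$ ensures that $y_\tau$ is nonnegative on $\sigma'$ and can vanish on a point $(v+\lambda w_0,\lambda)\in\sigma'$ only when $\lambda=0$, giving $\sigma'\cap y_\tau^\perp=\sigma\cap y_\tau^\perp=\sigma\cap\tau$. Because $y_\tau\geq 0$ on $\sigma'$ while $y_\tau\leq 0$ on $\tau$, the intersection $\sigma'\cap\tau$ is contained in $y_\tau^\perp$, so $\sigma'\cap\tau=\sigma\cap\tau$; this is a face of $\tau$ by the fan property of $\Sigma$ and a face of $\sigma'$ via $y_\tau$. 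Applying Lemma \ref{lemma:ComplexFromBigPolyhedra} to $\{\sigma'\}$ together with the maximal cones of $\Sigma$ then concludes that $\Face(\sigma')\cup\Sigma$ is a fan. The principal difficulty is to meet all the strict inequalities $\angbra{u_\tau,w_0}+\gamma_\tau>0$ simultaneously while keeping $w_0\in N_{\Q\Gamma}$; what makes this tractable is the observation that maximality of $\sigma$ automatically forces each separating $u_\tau$ to be strictly positive on $\relint(\tau_0)$, so scaling a single point $p'\in\relint(\tau_0)\cap N_{\Q\Gamma}$ by a small rational overwhelms every $\gamma_\tau$ at once.
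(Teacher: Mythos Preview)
Your argument is correct and takes a genuinely different route from the paper's. The paper builds $\sigma'$ from the outside in: it sets $\sigma':=(\tau_0\times\R_{\geq0})\cap\bigcap_{\tau\in\Sigma_{\max}\sdrop\{\sigma\}} y_\tau^\vee$, starting with the full cylinder over $\sigma$ and carving it down by the separating half-spaces, and then verifies that the result still protrudes above height zero by exhibiting a point $(w_0,\eps)$ with $w_0\in\relint(\tau_0)$ and $\eps>0$ small. You instead build $\sigma'$ from the inside out, as $\sigma+\R_{\geq0}(w_0,1)$ for a single ray direction chosen so that every separating functional $y_\tau$ is strictly positive on it. Both proofs invoke the same separation lemma and end with the same face-compatibility check via Lemma~\ref{lemma:ComplexFromBigPolyhedra}. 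Your construction yields a $\sigma'$ of the simplest possible shape (height-one slice $\tau_0+w_0$), so strong convexity, $\Gamma$-admissibility, and $\sigma\leq\sigma'$ are immediate; the paper's $\sigma'$ may have more facets but sidesteps the need to locate a specific $w_0\in N_{\Q\Gamma}$. One small simplification available to you: since $\tau_0$ is rational, you can pick $p\in\relint(\tau_0)\cap N_{\Q}$ directly and set $p'=\gamma p$ for any positive $\gamma\in\Gamma$, so the density appeal is unnecessary.
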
\begin{proof}
Let $\tilde{\sigma}$ be the rational cone in $N_{\R}$ with $\sigma=\tilde{\sigma}\times\{0\}$. If $L\subset M$ is a finite set such that $\tilde{\sigma}=L^\vee$ then $\tilde{\sigma}\times\R_{\geq0}=(L\times\{0\})^\vee$ so $\tilde{\sigma}\times\R_{\geq0}$ is a $\Gamma$-admissible cone. Fix $w_0\in\RelInt(\tilde{\sigma})$.

Part (\ref{lemmaPart:SeparationOfCones}) of Lemma \ref{lemma:AdmissibleSeparationLemma} gives us that for each $\tau\in\Sigma_{\max}\sdrop\{\sigma\}$ we can find $y_\tau\in M\times\Gamma$ such that $\sigma\subset y_{\tau}^\vee$, $\tau\subset(-y_\tau)^\vee$, and $\sigma\cap\tau=\sigma\cap y_\tau^\perp=\tau\cap y_\tau^\perp$. In particular we have $(w_0,0)\in\relint\sigma\subset\sigma\sdrop(\sigma\cap\tau)$ so $\angbra{y_\tau,(w_0,0)}>0$. So there is some $\eps_\tau>0$ in $\R$ such that $\{w_0\}\times[0,\eps_\tau]\subset y_\tau^\vee$.

Now let $\sigma':=(\tilde{\sigma}\times\R_{\geq0})\cap\{y_\tau\mid\tau\in\Sigma_{\max}\sdrop\{\sigma\}\}^\vee$. 
Because $\sigma$ is a face of $\tilde{\sigma}\times\R_{\geq0}$ and $\sigma\subset y_\tau^\vee$ for all $\tau\in\Sigma_{\max}\sdrop\{\sigma\}$, we see that $\sigma$ is a face of $\sigma'$. Thus in order to conclude that $\Face(\sigma')\cup\Sigma$ is a fan it suffices to show that $\sigma'\cap|\Sigma|\subset\sigma$. This inclusion follows from writing $|\Sigma|=\sigma\cup\dcup_{\tau\in\Sigma_{\max}\sdrop\{\sigma\}}\tau$ because we know that $\sigma'\cap\tau\subset y_\tau^\vee\cap\tau\subset\sigma$ for all $\tau\in\Sigma_{\max}\sdrop\{\sigma\}$. Finally, $\sigma'$ is not contained in $N_{\R}\times\{0\}$ because it contains the point $(w_0,\eps)$ where $\eps=\min\{\eps_\tau\mid\tau\in\Sigma_{\max}\sdrop\{\sigma\}\}$.
\end{proof}

\begin{lemma}
Theorem \ref{thm:CombinatorialNoetherianReduction} follows from Proposition \ref{prop:ExtendFan}.
\end{lemma}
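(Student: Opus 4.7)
The plan is to reduce to the case where $\Q\Gamma$ is finite dimensional as a $\Q$-vector space, then use Lemma \ref{lemma:ExtendPastHeightZero} to extend $\Sigma$ to a $\Gamma$-admissible fan satisfying the hypothesis of Proposition \ref{prop:ExtendFan}, apply that proposition, and finally extract the desired $\widetilde{\Sigma}$ as a subfan of a rational refinement of the fan so obtained. For the reduction: since $\Sigma$ has only finitely many cones, each defined by finitely many inequalities involving elements of $\Gamma$, we may take $\Gamma_0 \subseteq \Gamma$ to be the nontrivial finitely generated subgroup generated by these elements together with any fixed nonzero element of $\Gamma$. Then $\Sigma$ is $\Gamma_0$-admissible and $\Q\Gamma_0$ is finite dimensional, and since $\Gamma_0^k \subseteq \Gamma^k$ it suffices to prove the first statement of Theorem \ref{thm:CombinatorialNoetherianReduction} with $\Gamma_0$ in place of $\Gamma$. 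The second statement is handled by applying Proposition \ref{prop:ExtendFan} directly to $\Gamma$, since in that case the $k$ and $\iota$ furnished there are uniform in the input fan.

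Now assume that $\Q\Gamma$ is finite dimensional. If $\Sigma$ has maximal cones contained in $N_\R \times \{0\}$, iteratively apply Lemma \ref{lemma:ExtendPastHeightZero} to each to obtain a $\Gamma$-admissible fan $\Sigma^* \supseteq \Sigma$ with no maximal cone in $N_\R \times \{0\}$. Proposition \ref{prop:ExtendFan} then furnishes a positive integer $k$, a linear map $\iota \fncolon \R_{\geq 0} \to \R^k$ with $\iota(1) \in \Gamma^k \sdrop \{0\}$, and a rational fan $\widetilde{\Sigma}^*$ in $N_\R \times \R^k$ such that $(\id_{N_\R} \times \iota)^*(\widetilde{\Sigma}^*) = \Sigma^*$.

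Next I replace $\widetilde{\Sigma}^*$ with its common refinement $\widetilde{\Sigma}^{**}$ with the rational subdivisions of $N_\R \times \R^k$ induced by the $k$ coordinate hyperplanes $N_\R \times \{p_j = 0\}$ (for $j = 1, \dots, k$) of the $\R^k$ factor. Since each such hyperplane pulls back under $\id_{N_\R} \times \iota$ either to all of $N_\R \times \R_{\geq 0}$ or to $N_\R \times \{0\}$, depending on whether the $j$-th coordinate of $\iota(1)$ is zero or nonzero, and since both of these are already unions of faces of cones of $\Sigma^*$, this refinement preserves the pullback: $(\id_{N_\R} \times \iota)^*(\widetilde{\Sigma}^{**}) = \Sigma^*$. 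The advantage of the refinement is that every cone $\widetilde{\sigma} \in \widetilde{\Sigma}^{**}$ now lies in a single closed coordinate orthant of the $\R^k$ factor, so that $\widetilde{\sigma} \cap (N_\R \times \{0\})$ is an iterated face of $\widetilde{\sigma}$ cut out by the supporting equations $p_j = 0$, and is therefore a cone of $\widetilde{\Sigma}^{**}$.

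Finally, I set $\widetilde{\Sigma} := \{ \widetilde{\sigma} \in \widetilde{\Sigma}^{**} \mid (\id_{N_\R} \times \iota)^{-1}(\widetilde{\sigma}) \in \Sigma \}$. This is a rational subfan of $\widetilde{\Sigma}^{**}$, closed under faces because preimages under the linear injection $\id_{N_\R} \times \iota$ take faces to faces. The inclusion $(\id_{N_\R} \times \iota)^*(\widetilde{\Sigma}) \subseteq \Sigma$ follows directly, since $\widetilde{\sigma} \cap (N_\R \times \{0\})$ coincides with the face $(\id_{N_\R} \times \iota)^{-1}(\widetilde{\sigma}) \cap (N_\R \times \{0\})$ of $(\id_{N_\R} \times \iota)^{-1}(\widetilde{\sigma}) \in \Sigma$. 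The main obstacle is the reverse inclusion: given $\sigma \in \Sigma \subseteq \Sigma^*$, some $\widetilde{\sigma}'' \in \widetilde{\Sigma}^{**}$ satisfies either $(\id_{N_\R} \times \iota)^{-1}(\widetilde{\sigma}'') = \sigma$, in which case $\widetilde{\sigma}'' \in \widetilde{\Sigma}$, or $\widetilde{\sigma}'' \cap (N_\R \times \{0\}) = \sigma$. In the latter case, the refinement ensures that $\widetilde{\sigma}_0 := \widetilde{\sigma}'' \cap (N_\R \times \{0\})$ is itself a cone of $\widetilde{\Sigma}^{**}$, and since $\widetilde{\sigma}_0 \subseteq N_\R \times \{0\}$ and $\iota(1) \neq 0$ we have $(\id_{N_\R} \times \iota)^{-1}(\widetilde{\sigma}_0) = \widetilde{\sigma}_0 = \sigma$, placing $\widetilde{\sigma}_0$ in $\widetilde{\Sigma}$.
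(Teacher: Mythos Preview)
Your proof is correct and follows the same route as the paper: pass to a finitely generated subgroup of $\Gamma$, extend past height zero via Lemma~\ref{lemma:ExtendPastHeightZero}, apply Proposition~\ref{prop:ExtendFan}, and then take $\widetilde{\Sigma}$ to be the set of cones whose preimage lands in $\Sigma$. The one substantive difference is your extra refinement by the coordinate hyperplanes of $\R^k$: this forces every $\widetilde{\sigma}\cap(N_{\R}\times\{0\})$ to be a face of $\widetilde{\sigma}$, which you then use to verify the inclusion $\Sigma\subseteq(\id_{N_{\R}}\times\iota)^*(\widetilde{\Sigma})$ for cones of $\Sigma$ lying in $N_{\R}\times\{0\}$. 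The paper simply asserts the equality $(\id_{N_{\R}}\times\iota)^*(\widetilde{\Sigma})=\Sigma$ without this refinement; that is legitimate because the particular rational fan produced in the \emph{proof} of Proposition~\ref{prop:ExtendFan} already enjoys the needed face property (all of its cones sit in $\pi^{-1}(B)$ for a strongly convex cone $B$, so $N_{\R}\times\{0\}$ cuts out a face of each), but this fact is not recorded in the proposition's statement. Your refinement therefore makes the deduction from the \emph{statement} of Proposition~\ref{prop:ExtendFan} fully self-contained, at the small cost of an extra step.
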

\begin{proof}
Because $\Sigma$ is a finite collection of cones, each of which can be given as the locus in $N_{\R}\times\R_{\geq0}$ where a finite set of elements of $M\times\Gamma$ is nonnegative, $\Sigma$ is in fact $\Gamma'$-admissible for some finitely generated subgroup $\Gamma'$ of $\Gamma$. Fix $k$ and $\iota$ as in Proposition \ref{prop:ExtendFan}.

By recursively applying Lemma \ref{lemma:ExtendPastHeightZero} to any maximal cones of $\Sigma$ contained in $N_{\R}\times\{0\}$ we obtain a $\Gamma'$-admissible extension $\Sigma'$ of $\Sigma$ not having any maximal cones contained in $N_{\R}\times\{0\}$.

Proposition \ref{prop:ExtendFan} tells us that there is a rational fan $\widetilde{\Sigma}'$ in $N_{\R}\times\R^k$ such that $(\id_{N_{\R}}\times\iota)^*(\widetilde{\Sigma}')=\Sigma'$. 
Let $\widetilde{\Sigma}:=\{\sigma\in\widetilde{\Sigma}'\mid(\id_{N_{\R}}\times\iota)^{-1}(\sigma)\in\Sigma\}$. 
Suppose $\sigma\in\widetilde{\Sigma}$ and $\tau$ is a face of $\sigma$, so there is some $(u,f)\in M_{\Q}\times\Q^k$ such that $\sigma\subset (u,f)^\vee$ and $\tau=\sigma\cap (u,f)^\perp$. 
Thus the cone $(\id_{N_{\R}}\times\iota)^{-1}(\sigma)$ is contained in $(u,\iota^*(f))^\vee$ and so 
\begin{align*}
(\id_{N_{\R}}\times\iota)^{-1}(\tau)&=(\id_{N_{\R}}\times\iota)^{-1}(\sigma\cap (u,f)^\perp)\\
&=(\id_{N_{\R}}\times\iota)^{-1}(\sigma)\cap(u,\iota^*(f))^\perp
\end{align*}
 is a face of $(\id_{N_\R}\times\iota)^{-1}(\sigma)$. Since $(\id_{N_\R}\times\iota)^{-1}(\sigma)$ is in the fan $\Sigma$, this gives us that $(\id_{N_{\R}}\times\iota)^{-1}(\tau)\in\Sigma$ as well, and so $\tau\in\widetilde{\Sigma}$. Hence $\widetilde{\Sigma}$ is a subfan of $\widetilde{\Sigma}'$ such that $(\id_{N_\R}\times\iota)^*(\widetilde{\Sigma})=\Sigma$.

For the final claim, note that the only way that we used that $\Gamma'$ was finitely generated was to apply Proposition \ref{prop:ExtendFan}. Thus if $\Q\Gamma$ has finite dimension as a $\Q$-vector space then we can set $\Gamma'=\Gamma$ and then proceed as above.
\end{proof}

As we have reduced the proofs of Theorems \ref{thm:AdmissibleCompletionsExist} and \ref{thm:CombinatorialNoetherianReduction} to showing Proposition \ref{prop:ExtendFan}, for the remainder of this section we fix $\Gamma$ such that $\Q\Gamma$ is finite dimensional as a $\Q$-vector space. 
Fix $\gamma_1,\ldots,\gamma_k\in\Gamma$ which form a $\Q$-basis for $\Q\Gamma$. Let $\iota\fncolon\R_{\geq0}\to\R^k$ be the map $t\mapsto t\vgamma$ where $\vgamma=(\gamma_1,\ldots,\gamma_k)\in\R^k$.

We now outline the proof of Proposition \ref{prop:ExtendFan}. Suppose we are given a $\Gamma$-admissible fan $\Sigma$ in $N_{\R}\times\R_{\geq0}$ whose maximal cones all meet $N_{\R}\times\{1\}$ and we wish to show that there is a rational fan $\widetilde{\Sigma}$ such that $\Sigma=(\id_{N_\R}\times\iota)^*(\widetilde{\Sigma})$. By Lemma \ref{lemma:ComplexFromBigPolyhedra} we only need to construct the maximal cones of $\widetilde{\Sigma}$ and so can restrict our attention to the maximal cones of $\Sigma$. 
We will see that it is straightforward to find, for each maximal cone $\sigma$ of $\Sigma$, a rational cone $\widetilde{\sigma}$ in $N_{\R}\times\R^k$ such that $\sigma=(\id_{N_{\R}}\times\iota)^{-1}(\widetilde{\sigma})$. It will take significantly more work to show that there is a way to construct these $\widetilde{\sigma}$s so that for $\sigma_1,\sigma_2\in\Sigma_{\max}$, $\widetilde{\sigma}_1\cap\widetilde{\sigma}_2$ is a face of both $\widetilde{\sigma}_1$ and $\widetilde{\sigma}_2$.

Consider a $\Gamma$-admissible cone $\sigma$ in $N_{\R}\times\R_{\geq0}$; we want to find a rational cone $\widetilde{\sigma}$ in $N_{\R}\times\R^k$ with $\sigma=(\id_{N_{\R}}\times\iota)^{-1}(\widetilde{\sigma})$. Suppose we are also given a presentation of $\sigma$ as $\sigma=L^\vee$ for a finite set $L\subset M_{\Q}\times\Gamma$. One way to obtain a $\widetilde{\sigma}$ as above is to find, for each $y\in L$, some $\widetilde{y}$ in $M_{\Q}\times\Q^k$ such that $(\id_{N_{\R}}\times\iota)^{-1}(\widetilde{y}^\vee)=y^\vee$, and then to let $\widetilde{\sigma}=\{\widetilde{y}\mid y\in L\}^\vee$. This approach motivates us to make the following definitions.

For any $y\in M_{\Q}\times\Gamma$ write $y=\left(u,\sum_{\ell=1}^k y_\ell \gamma_\ell\right)$ with $y_{\ell}\in\Q$ and define $\widetilde{y}\in M_{\Q}\times \Q^k$ to be $\widetilde{y}:=(u,(y_1,\ldots,y_k))$. Further, for $L\subset M_{\Q}\times\Gamma$ we let $\widetilde{L}:=\{\widetilde{y}\mid y\in L\}$. The proof of Lemma \ref{lemma:TildeExtendsCones} shows that, for any such $L$, $(\id_{N_{\R}}\times\iota)^{-1}(\widetilde{L}^\vee)=L^\vee$. On the other hand, even if $L_1$ and $L_2$ are such that $L_1^\vee$ and $L_2^\vee$ intersect in a common face, the same need not hold for $\widetilde{L}_1$ and $\widetilde{L}_2$. Because of this, we will use a modification of the above construction.

Let $\pi\fncolon N_{\R}\times\R^k\to\R^k$ be the projection onto the second factor. For any $L\subset M_{\Q}\times\Gamma$ and any rational cone $B\subset\R^k$ we consider the cone $\posloc_{B}(\widetilde{L}):=\pi^{-1}(B)\cap\widetilde{L}^\vee$. 
We will work towards showing that for appropriate $L_1,L_2\subset M_{\Q}\times\Gamma$ we can choose $B$ so that $\posloc_B(\widetilde{L}_1)$ and $\posloc_B(\widetilde{L}_2)$ intersect in a common face. First, we verify that for suitable $B$, $(\id_{N_{\R}}\times\iota)^{-1}(\posloc_B(\widetilde{L}))=L^\vee$.

\begin{lemma}\label{lemma:TildeExtendsCones}
Let $B\subset\R^k$ be a cone containing $\vgamma$. For any $L\subset M_{\Q}\times\Gamma$ we have $(\id_{N_\R}\times\iota)^{-1}(\posloc_B(\widetilde{L}))=L^\vee$. In particular, $\id_{N_\R}\times\iota$ maps $L^\vee\cap(N_{\R}\times\{0_{\R}\})$ onto $\posloc_B(\widetilde{L})\cap(N_{\R}\times\{0_{\R^k}\})$.
\end{lemma}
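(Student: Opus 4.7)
The proof is a direct unwinding of the definitions, and naturally splits into two reductions.

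First, I would reduce the verification of $(\id_{N_\R}\times\iota)^{-1}(\posloc_B(\widetilde{L}))=L^\vee$ to two independent containments by writing $\posloc_B(\widetilde{L}) = \pi^{-1}(B)\cap\widetilde{L}^\vee$ and using that preimages commute with intersection, so it suffices to show
\[
(\id_{N_\R}\times\iota)^{-1}(\pi^{-1}(B)) = N_\R\times\R_{\geq0} \quad\text{and}\quad (\id_{N_\R}\times\iota)^{-1}(\widetilde{L}^\vee) = L^\vee.
\]

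For the first equality I would compute $\pi\circ(\id_{N_\R}\times\iota)(w,t) = t\vgamma$; since $B$ is a cone and $\vgamma\in B$, we have $t\vgamma\in B$ for every $t\in\R_{\geq0}$, so the preimage is the whole half-space. For the second equality, the key observation is the compatibility of the two pairings under $\iota$: for any $y=\bigl(u,\sum_{\ell=1}^k y_\ell\gamma_\ell\bigr)\in M_\Q\times\Gamma$ and any $(w,t)\in N_\R\times\R_{\geq0}$,
\[
\angbra{\widetilde{y},\,(w,t\vgamma)} = \angbra{u,w} + (y_1,\ldots,y_k)\cdot t\vgamma = \angbra{u,w} + t\sum_{\ell=1}^k y_\ell\gamma_\ell = \angbra{y,(w,t)}.
\]
Ranging over $y\in L$ gives $(w,t\vgamma)\in\widetilde{L}^\vee$ iff $(w,t)\in L^\vee$, which is exactly the claim.

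For the ``in particular'' statement, I would just observe that $(\id_{N_\R}\times\iota)(w,0_\R)=(w,0_{\R^k})$, so the restriction of $\id_{N_\R}\times\iota$ to $N_\R\times\{0_\R\}$ is a bijection onto $N_\R\times\{0_{\R^k}\}$; combined with the first part (and the fact that $0_{\R^k}\in B$, so $\pi^{-1}(B)$ contains $N_\R\times\{0_{\R^k}\}$), this yields the asserted surjection on the slices. There is no real obstacle here — the only thing to be careful about is keeping the two different zeros (in $\R$ and in $\R^k$) straight and verifying that $B$ being a cone ensures $\pi^{-1}(B)$ absorbs the ray $t\mapsto t\vgamma$ as well as the origin of $\R^k$.
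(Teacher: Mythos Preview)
Your proof is correct and follows essentially the same approach as the paper: the key step in both is the pairing identity $\angbra{\widetilde{y},(w,\iota(t))}=\angbra{y,(w,t)}$, and the treatment of the $\pi^{-1}(B)$ factor (you verify $(\id_{N_\R}\times\iota)^{-1}(\pi^{-1}(B))=N_\R\times\R_{\geq0}$ directly, the paper just notes the image lies in $\pi^{-1}(B)$) amounts to the same thing. The ``in particular'' statement is handled identically.
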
\begin{proof}
For $y\in M_{\Q}\times\Gamma$ and $(w,t)\in N_{\R}\times\R_{\geq0}$ we have $\angbra{\widetilde{y},(w,\iota(t))}=\angbra{y,(w,t)}$. Thus $(\id_{N_{\R}}\times\iota)^{-1}(\widetilde{y}^\vee)=y^\vee$. Since the image of $\id_{N_{\R}}\times\iota$ is contained in $\pi^{-1}(B)$ this gives 
\begin{align*}
(\id_{N_\R}\times\iota)^{-1}(\posloc_B(\widetilde{L}))
&=(\id_{N_\R}\times\iota)^{-1}\left(\pi^{-1}(B)\cap\dcap_{y\in L}\widetilde{y}^\vee\right)\\
&=\dcap_{y\in L}(\id_{N_\R}\times\iota)^{-1}(\widetilde{y}^\vee)=\dcap_{y\in L}y^\vee\\
&=L^\vee.
\end{align*}

The final statement follows because $\id_{N_\R}\times\iota$ maps $N_{\R}\times\{0_{\R}\}$ onto $N_{\R}\times\{0_{\R^k}\}$.
\end{proof}

Within the current framework, we will use Proposition \ref{prop:FarkasPlus} in the form of the following lemma.

\begin{lemma}\label{lemma:ExtendIncl}
Say $L\subset M_{\Q}\times\Gamma$ is a finite set such that $L^\vee$ is not contained in $N_{\R}\times\{0_{\R}\}$. Suppose also that $y_0\in M_{\Q}\times\Gamma$ is such that $L^\vee\subset y_0^\vee$. Then there is $f\in\Q^k$ with $\vgamma\in f^\vee$ such that if $B\subset f^\vee$ then $\posloc_B(\widetilde{L})\subset \widetilde{y_0}^\vee$.
\end{lemma}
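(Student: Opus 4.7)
I would first apply classical Farkas (Proposition~\ref{prop:UsualFarkas}) in $N_\R\times\R$, where $L^\vee$ is the polyhedron defined by the inequalities $\angbra{y_i,(w,t)}\geq 0$ for $y_i\in L$ together with $t\geq 0$. Since $L^\vee$ is nonempty (it contains the origin) and $\angbra{y_0,\cdot}\geq 0$ is valid on it by hypothesis, this yields real $\lambda_1,\ldots,\lambda_m,\mu\in\R_{\geq 0}$ with $y_0=\sum_i\lambda_i y_i+\mu(0,1)$. Writing $y_i=(u_i,c_i)$ with $u_i\in M_\Q$ and $c_i\in\Gamma$, this gives $u_0=\sum\lambda_i u_i$ and $c_0-\sum\lambda_i c_i=\mu\geq 0$.

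The crucial next step is to descend these coefficients to $\Q$, producing $\lambda_1^*,\ldots,\lambda_m^*\in\Q_{\geq 0}$ with $\sum\lambda_i^* u_i=u_0$ and $\sum\lambda_i^* c_i\leq c_0$. Consider the $\Q$-definable polyhedron $P:=\{\lambda\in\R^m_{\geq 0}:\sum\lambda_i u_i=u_0\}$, which is pointed since nonnegativity rules out lines, together with the real-linear function $\phi(\lambda):=\sum\lambda_i c_i$; the previous step shows $\inf_P\phi\leq c_0$. If $\phi$ is bounded below on $P$, its infimum is attained at a vertex of $P$, and since $P$ is a nonempty rational pointed polyhedron, its vertices are rational, giving the desired $\lambda^*$. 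Otherwise $\phi$ takes negative values on the rational recession cone $\rec(P)=\{r\in\R^m_{\geq 0}:\sum r_i u_i=0\}$; by density of rational points in a rational cone we may pick $r\in\rec(P)\cap\Q^m$ with $\phi(r)<0$, so $\lambda^*:=\lambda_0+tr$ works for any rational vertex $\lambda_0$ of $P$ (which exists by Lemma~\ref{lemma:KPointsExist}) and sufficiently large $t\in\Q_{>0}$.

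With such rational $\lambda_i^*$ in hand, I would set $f:=v_0-\sum\lambda_i^* v_i\in\Q^k$, where $v_i\in\Q^k$ is the coordinate vector of $c_i$ in the basis $\gamma_1,\ldots,\gamma_k$ so that $\widetilde{y_i}=(u_i,v_i)$. Then $f\cdot\vgamma=c_0-\sum\lambda_i^* c_i\geq 0$, so $\vgamma\in f^\vee$. By construction $\widetilde{y_0}=\sum\lambda_i^*\widetilde{y_i}+(0,f)$ in $M_\Q\times\Q^k$, so for any $(w,\xi)\in\widetilde{L}^\vee\cap\pi^{-1}(f^\vee)$ one has $\angbra{\widetilde{y_0},(w,\xi)}=\sum\lambda_i^*\angbra{\widetilde{y_i},(w,\xi)}+f\cdot\xi\geq 0$, proving $\widetilde{L}^\vee\cap\pi^{-1}(f^\vee)\subset\widetilde{y_0}^\vee$. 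For any $B\subset f^\vee$ this yields $\posloc_B(\widetilde{L})=\pi^{-1}(B)\cap\widetilde{L}^\vee\subset\widetilde{y_0}^\vee$, as required.

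The hard part is the descent in the middle step: the inequality $\sum\lambda_i c_i\leq c_0$ has irrational coefficients $c_i\in\Gamma$, is not $\Q$-definable, and so neither Lemma~\ref{lemma:KPointsExist}(1) nor Proposition~\ref{prop:FarkasPlus} applies to it directly. The resolution is to work instead on the $\Q$-definable polyhedron $P$ alone, and to exploit the standard polyhedral fact that a real-linear function on a pointed rational polyhedron, if bounded below, attains its minimum at a rational vertex.
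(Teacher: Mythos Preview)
Your proof is correct, and from the point where you have rational coefficients $\lambda_i^*$ onward it coincides with the paper's argument verbatim: both arrive at the identity $\widetilde{y_0}=\sum_i\lambda_i^*\,\widetilde{y_i}+(0,f)$ with $f\cdot\vgamma\geq 0$ and finish in the same way.

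The difference is in how the rational $\lambda_i^*$ are obtained. The paper applies Proposition~\ref{prop:FarkasPlus} directly to the height-$1$ slice $\{w\in N_\R:\angbra{u_i,w}\geq -c_i\}$ of $L^\vee$; this is exactly where the hypothesis $L^\vee\not\subset N_\R\times\{0\}$ is used, since it guarantees that slice is nonempty. You instead apply classical Farkas to the cone $L^\vee$ (which always contains the origin) to get real multipliers, and then descend to $\Q$ by optimizing the real-linear functional $\phi$ over the rational polytope of Farkas multipliers in $\R^m$. This is essentially a dual re-proof of the relevant case of Proposition~\ref{prop:FarkasPlus}, carried out in the coefficient space rather than in $N_\R$. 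A pleasant side effect is that your argument never uses the hypothesis $L^\vee\not\subset N_\R\times\{0\}$, so it in fact proves a slightly stronger statement; the paper's route, on the other hand, is a one-line citation once Proposition~\ref{prop:FarkasPlus} is on the shelf. One small quibble: your appeal to Lemma~\ref{lemma:KPointsExist} for a rational \emph{vertex} of $P$ is not quite what that lemma says, but the claim is of course true since any vertex of a rational pointed polyhedron is rational.
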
\begin{proof}
Write $L=\{y_1,\ldots,y_m\}$ and for $i=0,1,\ldots,m$ write $y_i=(u_i,\alpha_i)$ with $u_i\in M_{\Q}$ and $\alpha_i\in\Gamma$. Setting $P:=\{w\in N_{\R}\mid\angbra{u_i,w}\geq -\alpha_i\ (\forall i=1,\ldots,m)\}$ we have $L^\vee\cap(N_{\R}\times\{1\})=P\times\{1\}$. So $P$ is nonempty and $\angbra{u_0,w}\geq -\alpha_0$ is valid for all $w\in P$. Thus Proposition \ref{prop:FarkasPlus} gives us that there are $c_1,\ldots,c_m\in\Q_{\geq0}$ such that $\sum_{i=1}^m c_iu_i=u_0$ and $\delta:=\sum_{i=1}^m c_i\alpha_i\leq \alpha_0$. In particular, $\sum_{i=1}^m c_iy_i=(u_0,\delta)$. Write $\alpha_0-\delta=\sum_{\ell=1}^k f_{\ell}\gamma_{\ell}$ and let $f=(f_1,\ldots,f_k)\in\Q^k$, so $\widetilde{y}_0=\sum_{i=1}^mc_i\widetilde{y}_i+(0,f)$. Now if $B\subset f^\vee$ then $\posloc_B(\widetilde{L})\subset(0,f)^\vee$, so if $x\in \posloc_B(\widetilde{L})$ then $\angbra{\widetilde{y}_0,x}=\sum_{i=1}^m c_i\angbra{\widetilde{y}_i,x}+\angbra{(0,f),x}\geq0$.
\end{proof}

\begin{proof}[Proof of Proposition \ref{prop:ExtendFan}]
Let $\Sigma$ be a $\Gamma$-admissible fan in $N_{\R}\times\R_{\geq0}$ such that no maximal cone of $\Sigma$ is contained in $N_{\R}\times\{0\}$. For each $\sigma\in\Sigma_{\max}$ fix a finite $L_{\sigma}\subset M_{\Q}\times\Gamma$ such that $\sigma=L_{\sigma}^\vee$. We will show that there is a rational polyhedral cone $B\subset\R^k$ containing $\vgamma$ such that $\widetilde{\Sigma}:=\dcup_{\sigma\in\Sigma_{\max}}\Face(\posloc_B(\widetilde{L_{\sigma}}))$ is a rational fan with $\Sigma=(id_{N_{\R}}\times\iota)^*(\widetilde{\Sigma})$.

First we show that for $\sigma,\tau\in\Sigma_{\max}$ there is a rational cone $B_{\sigma,\tau}$ containing $\nbar{\gamma}$ such that if $B\subset B_{\sigma,\tau}$ then $\posloc_B(\widetilde{L_\sigma})$ and $\posloc_B(\widetilde{L_\tau})$ intersect in a common face. 
Because $\sigma$ and $\tau$ intersect in a common face, part (\ref{lemmaPart:SeparationOfCones}) of Lemma \ref{lemma:AdmissibleSeparationLemma} gives us that there is $y_0\in M_{\Q}\times\Gamma$ such that $\sigma\subset y_0^\vee$, $\tau\subset(-y_0)^\vee$, and $\sigma\cap(-y_0)^\vee=\sigma\cap\tau=\tau\cap y_0^\vee$. 
That is, we have the finitely many inclusions
\begin{align}
L_{\sigma}^\vee&\subset y_0^\vee,\label{eqn:Incl1}\\
L_{\tau}^\vee&\subset (-y_0)^\vee,\label{eqn:Incl2}\\
(L_{\sigma}\cup\{-y_0\})^\vee&\subset y^\vee\quad\text{ for }y\in L_{\sigma}\cup L_{\tau},\label{eqn:Incl3}\\
\intertext{and}
(L_{\tau}\cup\{y_0\})^\vee&\subset y^\vee\quad\text{ for }y\in L_{\sigma}\cup L_{\tau}\label{eqn:Incl4}.
\end{align}
In order to use these inclusions to conclude that $\posloc_B(\widetilde{L_\sigma})$ and $\posloc_B(\widetilde{L_\tau})$ intersect in a common face, we consider two cases as to whether $\sigma\cap\tau$ is contained in $N_{\R}\times\{0\}$ or not.

\emph{Case 1}: $\sigma\cap\tau$ is not contained in $N_{\R}\times\{0\}$. In this case $L_{\sigma}^\vee$, $L_{\tau}^\vee$, $(L_{\sigma}\cup\{-y_0\})^\vee=\sigma\cap\tau$, and $(L_{\tau}\cup\{y_0\})^\vee=\sigma\cap\tau$ are not contained in $N_{\R}\times\{0\}$, so we can apply Lemma \ref{lemma:ExtendIncl} to the inclusions in (\ref{eqn:Incl1}) - (\ref{eqn:Incl4}). We conclude that there is a finite set $F_{\sigma,\tau}\subset\Q^k$ with $\vgamma\in B_{\sigma,\tau}:=F_{\sigma,\tau}^\vee$ such that if $B\subset B_{\sigma,\tau}$ then 
\begin{align*}
\posloc_B(\widetilde{L_{\sigma}})&\subset \widetilde{y_0}^\vee, \\
\posloc_B(\widetilde{L_{\tau}})&\subset(-\widetilde{y_0})^\vee, \\
\posloc_B(\widetilde{L_{\sigma}})\cap(-\widetilde{y_0})^\vee&\subset\posloc_B(\widetilde{L_{\sigma}})\cap\posloc_B(\widetilde{L_{\tau}}), \\
\intertext{and}
\posloc_B(\widetilde{L_{\tau}})\cap \widetilde{y_0}^\vee&\subset\posloc_B(\widetilde{L_{\sigma}})\cap\posloc_B(\widetilde{L_{\tau}}).
\end{align*}
So $B_{\sigma,\tau}$ is as desired.

\emph{Case 2}: $\sigma\cap\tau$ is contained in $N_{\R}\times\{0\}$. Write $y_0=(u_0,\alpha_0)$ with $u_0\in M_{\Q}$ and $\alpha_0\in\Gamma$. Let $P_{\sigma}:=\{w\in N_{\R}\mid (w,1)\in\sigma\}$ and $P_{\tau}:=\{w\in N_{\R}\mid (w,1)\in\tau\}$, which are $\Gamma$-rational polyhedra. Further, $\angbra{u_0,w}>-\alpha_0$ is valid for $w\in P_{\sigma}$ and $\angbra{u_0,w}<-\alpha_0$ is valid for $w\in P_{\tau}$. Thus there is a positive $\eps\in\Q\Gamma$ such that $\angbra{u_0,w}\geq-\alpha_0+\eps$ is valid for $w\in P_{\sigma}$ and $\angbra{u_0,w}\leq-\alpha_0-\eps$ is valid for $w\in P_{\tau}$. In terms of the cones $\sigma$ and $\tau$ this gives us that $\sigma\subset (y_0-(0,\eps))^\vee$ and $\tau\subset(-y_0-(0,\eps))^\vee$. Since $\sigma$ and $\tau$ are not contained in $N_{\R}\times\{0\}$, Lemma \ref{lemma:ExtendIncl} tells us that there are $g_1,g_2\in\Q^k$ such that if $B\subset\{g_1,g_2\}^\vee$ then 
\begin{align}
\posloc_B(\widetilde{L_{\sigma}})&\subset \left(\widetilde{y_0}-\widetilde{(0,\eps)}\right)^\vee\label{eqn:C2Incl1}\\
\intertext{and}
\posloc_B(\widetilde{L_{\tau}})&\subset\left(-\widetilde{y_0}-\widetilde{(0,\eps)}\right)^\vee\label{eqn:C2Incl2}.
\end{align}

Write $\eps=\sum_{\ell=1}^k h_\ell\gamma_\ell$ with $h_\ell\in\Q$ and let $h=(h_1,\ldots,h_k)$ so $\widetilde{(0,\eps)}=(0,h)$. 
So for $B\subset\{g_1,g_2,h\}^\vee$ the inclusions (\ref{eqn:C2Incl1}) and (\ref{eqn:C2Incl2}) give us that also $\posloc_B(\widetilde{L_{\sigma}})\subset \widetilde{y_0}^\vee$ and $\posloc_B(\widetilde{L_{\tau}})\subset(-\widetilde{y_0})^\vee$. Furthermore, for $b\in B\sdrop h^\perp$ we have that $\posloc_B(\widetilde{L_{\sigma}})\cap\pi^{-1}(b)\subset \widetilde{y_0}^\vee\sdrop\widetilde{y_0}^\perp$ and $\posloc_B(\widetilde{L_{\tau}})\cap\pi^{-1}(b)\subset(-\widetilde{y_0})^\vee\sdrop\widetilde{y_0}^\perp$, so $\posloc_B(\widetilde{L_{\sigma}})\cap\posloc_B(\widetilde{L_{\tau}})\cap\pi^{-1}(b)=\emptyset$. Thus $\posloc_B(\widetilde{L_{\sigma}})\cap\posloc_B(\widetilde{L_{\tau}})\subset \pi^{-1}(B\cap h^\perp)$.

Because $\eps$ is positive, $\nbar{\gamma}$ is in $h^\vee\sdrop h^\perp$, so by Lemma \ref{lemma:ThinRationalConeAroundPoint} there is a rational cone $B'\subset h^\vee$ containing $\nbar{\gamma}$ with $B'\cap h^\perp=\{0_{\R^k}\}$. 
So for $B\subset B_{\sigma,\tau}:=\{g_1,g_2,h\}^\vee\cap B'$ we get that 
\begin{equation}
\posloc_B(\widetilde{L_{\sigma}})\cap\posloc_B(\widetilde{L_{\tau}})\subset \pi^{-1}(B\cap h^\perp)=N_{\R}\times\{0_{\R^k}\}.\label{eqn:C2TildeIntersectAtHeightZero}
\end{equation}
Because $B'$, and so also $B$, is a strongly convex cone we have that 
\begin{align}
\posloc_B(\widetilde{L_{\sigma}})\cap (N_{\R}\times\{0_{\R^k}\})&\leq \posloc_B(\widetilde{L_{\sigma}})\label{eqn:C2HeightZeroFace1}\\
\intertext{and}
\posloc_B(\widetilde{L_{\tau}})\cap (N_{\R}\times\{0_{\R^k}\})&\leq\posloc_B(\widetilde{L_{\tau}}).\label{eqn:C2HeightZeroFace2}
\end{align}
By Lemma \ref{lemma:TildeExtendsCones} restrictions of $\id_{N_{\R}}\times\iota$ give bijections 
\begin{align*}
\sigma\cap(N_{\R}\times\{0_{\R}\})&\to\posloc_B(\widetilde{L_{\sigma}})\cap (N_{\R}\times\{0_{\R^k}\}),\\
\tau\cap(N_{\R}\times\{0_{\R}\})&\to\posloc_B(\widetilde{L_{\tau}})\cap (N_{\R}\times\{0_{\R^k}\}),\\
\intertext{and}
\sigma\cap\tau\cap(N_{\R}\times\{0_{\R}\})&\to\posloc_B(\widetilde{L_{\sigma}})\cap\posloc_B(\widetilde{L_{\tau}})\cap (N_{\R}\times\{0_{\R^k}\}).
\end{align*}
Together with the fact that $\sigma\cap(N_{\R}\times\{0_{\R}\})$ and $\tau\cap(N_{\R}\times\{0_{\R}\})$ are both faces of the fan $\Sigma$ this gives us that $\posloc_B(\widetilde{L_{\sigma}})\cap\posloc_B(\widetilde{L_{\tau}})\cap (N_{\R}\times\{0_{\R^k}\})$ is a common face of $\posloc_B(\widetilde{L_{\sigma}})\cap (N_{\R}\times\{0_{\R^k}\})$ and $\posloc_B(\widetilde{L_{\tau}})\cap (N_{\R}\times\{0_{\R^k}\})$. 
Combining this with (\ref{eqn:C2TildeIntersectAtHeightZero}), (\ref{eqn:C2HeightZeroFace1}), and (\ref{eqn:C2HeightZeroFace2}) we see that $\posloc_B(\widetilde{L_{\sigma}})\cap\posloc_B(\widetilde{L_{\tau}})=\posloc_B(\widetilde{L_{\sigma}})\cap\posloc_B(\widetilde{L_{\tau}})\cap (N_{\R}\times\{0_{\R^k}\})$ is a face of both $\posloc_B(\widetilde{L_{\sigma}})$ and $\posloc_B(\widetilde{L_{\tau}})$. 
This shows that $B_{\sigma,\tau}$ is as desired, finishing the proof of the original claim in case 2.

Now $B_{\Sigma}:=\dcap_{\sigma,\tau\in\Sigma_{\max}}B_{\sigma,\tau}$ is a rational polyhedral cone containing $\nbar{\gamma}$. Let $B$ be any strongly convex rational polyhedral cone containing $\nbar{\gamma}$ and contained in $B_{\Sigma}$. Then for any $\sigma,\tau\in\Sigma_{\max}$ we have that $\posloc_B(\widetilde{L_{\sigma}})$ and $\posloc_B(\widetilde{L_{\tau}})$ intersect in a common face, so by Lemma \ref{lemma:ComplexFromBigPolyhedra} $\widetilde{\Sigma}:=\dcup_{\sigma\in\Sigma_{\max}}\Face(\posloc_B(\widetilde{L_{\sigma}}))$ is a generalized fan. Note that each $\pi(\posloc_B(\widetilde{L_{\sigma}}))\subset B$, so because $B$ is strongly convex the lineality space of $\posloc_B(\widetilde{L_{\sigma}})$ is contained in $\pi^{-1}(0)=N_{\R}\times\{0_{\R^k}\}$. So the lineality space of $\posloc_B(\widetilde{L_{\sigma}})$ is also the lineality space of $\posloc_B(\widetilde{L_{\sigma}})\cap(N_{\R}\times\{0_{\R^k}\})=(\id_{N_{\R}}\times\iota)(\sigma\cap(N_{\R}\times\{0_{\R}\}))$. Because $\sigma$ is strongly convex this must be $0$, so $\widetilde{\Sigma}$ is a fan.

We know that $\Delta:=(\id_{N_{\R}}\times\iota)^*(\widetilde{\Sigma})$ is a fan in $N_{\R}\times\R_{\geq0}$. The maximal cones of $\Delta$ are among the cones $(\id_{N_{\R}}\times\iota)^{-1}(\tau)$ for $\tau$ a maximal cone of $\widetilde{\Sigma}$. Because every maximal cone of $\widetilde{\Sigma}$ is of the form $\posloc_B(\widetilde{L_\sigma})$ for some maximal cone $\sigma$ in $\Sigma$ and $\sigma=(\id_{N_{\R}}\times\iota)^{-1}(\posloc_B(\widetilde{L_\sigma}))$ for such a $\sigma$, we have that all maximal cones of $\Delta$ are maximal cones of $\Sigma$. In particular, $\Delta$ is a subfan of $\Sigma$. Also, each maximal cone $\sigma$ in $\Sigma$ can be written as $\sigma=(\id_{N_{\R}}\times\iota)^{-1}(\posloc_B(\widetilde{L_{\sigma}}))$ and so is also a cone of $\Delta$. Thus $\Delta$ is a subfan of $\Sigma$ which contains all of the maximal cones of $\Sigma$, so $\Sigma=\Delta$.
\end{proof}

%
%

\section{Completions of fans over arbitrary ordered fields}\label{sec:CompletionsOverOrderedFields}

We now consider the existence of completions of fans when $\R$ is replaced by an arbitrary ordered field $R$. We show that rational fans in $N_{R}$ admit rational completions whereas for some ordered fields $R$ there are $R$-admissible fans in $N_{R}\times R_{\geq0}$ with no $R$-admissible completion. This section is independent of the rest of the paper and, in particular, is not needed for the proof of Theorem \ref{thm:NormalTCompletionsExist} or for any of the other results in Section \ref{sec:CompletionsOfTToricVars}.

Our proof of the existence of rational completions of rational fans in $N_{R}$ uses elementary model theory to deduce the result from the corresponding result for rational fans in $N_{\R}$. We refer to \cite{MarkerModelTheoryBook} for unexplained language and notation from model theory.

Let $\calL=\{0,+,-,<\}$ and let $\ODAG$ be the $\calL$-theory of nontrivial ordered divisible abelian groups.

\begin{prop}\label{prop:RationalFansInHigherRank}
Let $R$ be an ordered field. Then every rational fan in $N_{R}$ has a rational completion.
\end{prop}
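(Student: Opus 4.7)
The plan is to reduce to Oda's theorem over $\R$ using the completeness of $\ODAG$. Every ordered field $R$ is, under addition alone, a nontrivial ordered divisible abelian group (divisibility coming from invertibility of nonzero integers in $R$), so $R$ is a model of $\ODAG$. Since $\ODAG$ is a complete theory, $R$ and $\R$ satisfy exactly the same $\calL$-sentences.

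First, I would present the given fan $\Sigma$ in $N_R$ combinatorially: for each maximal cone $\sigma$, choose a finite set $L_\sigma\subset M\cong\Z^n$ with $\sigma=L_\sigma^\vee$, and collect these into a combinatorial datum $\mathcal{D}$. The same datum $\mathcal{D}$ determines a putative fan $\Sigma_\R$ in $N_\R$. Using Lemma \ref{lemma:KPointsExist}, I would verify that the fan axioms for $\Sigma_\R$ transfer from those of $\Sigma$: strong convexity, intersection conditions, and the face relation for rational cones all reduce to existence (or nonexistence) of rational points in auxiliary rational polyhedra, and this is field-independent. Thus $\Sigma_\R$ is a rational fan in $N_\R$.

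Next, I would apply Oda's theorem to obtain a rational completion $\overline{\Sigma_\R}\supset\Sigma_\R$ in $N_\R$, corresponding to some enlarged combinatorial datum $\overline{\mathcal{D}}$. Interpreting $\overline{\mathcal{D}}$ over $N_R$ produces a collection $\overline{\Sigma}$ of rational cones in $N_R$ containing $\Sigma$; by the same field-independence argument, $\overline{\Sigma}$ is a rational fan in $N_R$.

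To finish, I must show $|\overline{\Sigma}|=N_R$. This is where the model-theoretic input enters directly: the completeness condition is expressed by the single $\calL$-sentence
\[
\varphi \ := \ \forall w_1\cdots\forall w_n\ \bigvee_{\sigma\in\overline{\Sigma}_{\max}}\ \bigwedge_{u\in L_\sigma}\ \textstyle\sum_{i=1}^n u_i w_i\geq 0,
\]
in which each inner sum is an $\calL$-term, the integer coefficients being realized as repeated addition. Since $\R\models\varphi$ by Oda's theorem and $R\equiv\R$ as $\calL$-structures, we conclude $R\models\varphi$, i.e.\ $|\overline{\Sigma}|=N_R$. The hard part is the preceding field-independence of the fan axioms: while the completeness of $\overline{\Sigma}$ is cleanly handled by a single appeal to $\ODAG$-completeness for one explicit sentence, checking that $\mathcal{D}$ and $\overline{\mathcal{D}}$ actually give fans in both $N_\R$ and $N_R$ requires unpacking each axiom (especially the face condition) as a condition on rational points---routine, but requiring care.
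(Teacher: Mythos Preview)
Your proposal is correct and follows essentially the same approach as the paper: both arguments reduce to Oda's theorem over $\R$ by observing that the relevant conditions (being a rational fan, being complete) are expressible by $\calL$-sentences with integer parameters and then invoking the completeness of $\ODAG$ to transfer between $R$ and $\R$. The only cosmetic difference is that you handle the transfer of the fan axioms via Lemma~\ref{lemma:KPointsExist}, whereas the paper simply notes that these conditions are first-order and applies $\ODAG$-completeness uniformly; both justifications amount to the same thing.
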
\begin{proof}
Fix an identification $N\cong\Z^n$ so that we can consider rational fans in $R^n$. Note that every rational cone in $R^n$ is $\emptyset$-definable in $\calL$. Moreover, if $\sigma_1,\ldots,\sigma_m$ are rational cones in $R^n$ then the property that $\sigma_i$ is the face of $\sigma_j$ cut out by $(\ell_1,\ldots,\ell_n)\in \Z^n$ is a first-order property, as is the property that the union $\bigcup_{i=1}^n \sigma_i$ is all of $R^n$. Since $R$ and $\R$ are both models of $\ODAG$, the existence of a rational completion of a fixed rational fan $\Sigma$ in $R^n$ follows from the existence of rational completions of rational fans in $\R^n$, \cite[page 18]{Oda}, and the fact that $\ODAG$ is a complete theory, \cite[Corollary 3.1.17]{MarkerModelTheoryBook}.
\end{proof}

In contrast to the case of rational fans, for some ordered fields $R$ there are $R$-admissible fans in $N_{R}\times R_{\geq0}$ which do not have any $R$-admissible completion. 
Before we begin the construction of such $R$-admissible fans we review some basic notions regarding the order and arithmetic of a general ordered field.

Let $R$ be an ordered field, so in particular, $\Q$ is a subfield of $R$. For non-negative elements $a,b\in R$ we say that $b$ is \emph{infinitesimal compared to $a$} and write $b\ll a$ if for all positive $q\in\Q$, $qb<a$. 
If this is not the case, i.e., there is some positive $q\in\Q$ such that $a\leq qb$ then we write $a\lesssim b$. 
Note that if $b\ll a$ and $c\lesssim b$ then $c\ll a$. 
The following lemma is well-known to those who work with arbitrary ordered fields; we include a quick proof in order to make our proof of Theorem \ref{thm:HigherRankBadFan} accessible to people who generally work only with polyhedra over $\R$.

\begin{lemma}\label{lemma:OrderedArithmetic}
Suppose $a,\delta\in R$ are positive, $\delta\ll a$, and $\theta\in R$ is such that $|\theta|\ll a$. Then $\delta\ll a+\theta$.
\end{lemma}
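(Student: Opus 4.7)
The plan is to unpack the definition of $\ll$ directly: I need to show that for every positive $q\in\Q$, one has $q\delta < a+\theta$. The key observation is that both hypotheses $\delta\ll a$ and $|\theta|\ll a$ allow me to bound things by an arbitrarily small rational fraction of $a$, so I should aim to squeeze $q\delta$ strictly below $a/2$ and $-\theta$ strictly below $a/2$, then add.

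Concretely, first I would verify that $a+\theta$ is positive, so that the notation $\delta\ll a+\theta$ is meaningful: since $|\theta|\ll a$, applying the definition with the positive rational $2$ gives $2|\theta|<a$, so $|\theta|<a/2$, and in particular $a+\theta > a - a/2 = a/2 > 0$.

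Next, fix an arbitrary positive $q\in\Q$. Applying $\delta\ll a$ to the positive rational $2q$ yields $2q\delta < a$, i.e.\ $q\delta < a/2$. Applying $|\theta|\ll a$ (as above) gives $\theta \geq -|\theta| > -a/2$. Adding these, $a+\theta > a/2 > q\delta$. Since $q$ was arbitrary, $\delta\ll a+\theta$.

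There is no real obstacle here — the proof is a one-line calculation once one thinks to split $a$ into the two halves $a/2$ and $a/2$ so that each hypothesis handles one summand. The only subtlety worth flagging is the well-definedness issue (that $a+\theta$ is positive), which the same estimate already takes care of.
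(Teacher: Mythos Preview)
Your proof is correct and uses the same halving trick as the paper. The only difference is cosmetic: the paper splits into the cases $\theta\geq0$ (trivial) and $\theta<0$ (where it applies $2q\delta<a$ and $2|\theta|<a$ exactly as you do), whereas you apply the halving argument uniformly and also make explicit that $a+\theta>0$ so that the relation $\delta\ll a+\theta$ is well-posed.
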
\begin{proof}
If $\theta$ is non-negative then for any positive $q\in\Q$ we have $q\delta<a\leq a+\theta$, so $\delta\ll a+\theta$. 
If $\theta$ is negative then for any positive $q\in\Q$ we have $2q\delta<a$ and $-2\theta=2|\theta|<a$, so $q\delta-\theta<\frac{a}{2}+\frac{a}{2}=a$, and so $\delta\ll a+\theta$.
\end{proof}

The following lemma will allow us to restrict what $R$-admissible completions of a certain fan could look like, and so show that no such completion exists.

\begin{lemma}\label{lemma:ThisLineIsHorizontal}
Let $a,b\in R$ be positive and let $\eps,\gamma,\theta\in R$ be such that $0\leq\eps<\gamma$ and $\gamma,|\theta|\ll a,b$. Let $P$ be an $R$-rational polyhedron in $R^2$ which has the edge $\conv((-a,\eps),(b,\eps))$ as a face and suppose the point $(\theta,\gamma)$ is on the boundary of $P$. Then $P$ has a unique edge containing $(\theta,\gamma)$. This edge is bounded and its affine span is a line of slope $0$.
\end{lemma}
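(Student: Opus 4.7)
The plan is to analyze any edge of $P$ through $(\theta,\gamma)$ by examining its supporting line, and then use the infinitesimal hypotheses to force the slope to be zero. First I would observe that $P$ must be $2$-dimensional: if $P$ had dimension at most $1$, then $P$ would be contained in the affine hull of $e_0$, which is $\{y=\eps\}$, contradicting $(\theta,\gamma)\in P$ with $\gamma>\eps$. The only line containing $e_0$ is $\{y=\eps\}$, so this is the supporting line that realizes $e_0$ as a face of $P$. Since $(\theta,\gamma)\in P$ lies above it, $P\subset\{y\ge\eps\}$.

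Next, fix any edge $e$ of $P$ containing $(\theta,\gamma)$; the goal is to show $e\subset\{y=\gamma\}$. Because $P$ is $R$-rational, $e$ is cut out by a supporting line with integer normal: $e=P\cap\{mx+ny=c\}$ and $P\subset\{mx+ny\ge c\}$ for some $(m,n)\in\Z^2\sdrop\{0\}$ and $c\in R$. The condition $(\theta,\gamma)\in e$ gives $c=m\theta+n\gamma$, and plugging the endpoints $(-a,\eps),(b,\eps)\in e_0\subset P$ into $mx+ny\ge c$ yields
\begin{equation*}
-m(a+\theta)\ge n(\gamma-\eps)\quad\text{and}\quad m(b-\theta)\ge n(\gamma-\eps).
\end{equation*}
Since $|\theta|\ll a,b$, we have $a+\theta,b-\theta>0$, and $\gamma-\eps>0$. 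A direct sign analysis rules out every case with $n\ge0$, and also forces $m=0$ when $n=0$, which is disallowed. In the subcase $n<0,\ m>0$ the first inequality rearranges to $m(a+\theta)\le|n|\gamma$; but $\gamma\ll a$ together with $|\theta|\ll a$ and Lemma~\ref{lemma:OrderedArithmetic} gives $\gamma\ll a+\theta$, so in particular $|n|\gamma<m(a+\theta)$, a contradiction. The symmetric subcase $n<0,\ m<0$ is dispatched in the same way using the second inequality and $\gamma\ll b$. Thus the only possibility is $m=0,\ n<0$, giving $e\subset\{y=\gamma\}$ and simultaneously $P\subset\{y\le\gamma\}$.

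Finally, I would deduce uniqueness and boundedness. If $(\theta,\gamma)$ were a vertex of $P$, the two edges incident to it would both lie on $\{y=\gamma\}$, hence share the same affine span—impossible for distinct edges at a vertex. So $(\theta,\gamma)$ lies in the relative interior of a unique edge $e$, whose affine span is the horizontal line $\{y=\gamma\}$. For boundedness, $P$ is sandwiched in the strip $\{\eps\le y\le\gamma\}$, so $\rec(P)\subset\{y=0\}$. Since $e_0$ is a bounded face of $P$ cut out by the supporting line $\{y=\eps\}$, the face-recession identity gives $\rec(e_0)=\rec(P)\cap\{y=0\}=\rec(P)$, forcing $\rec(P)=\{0\}$. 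Hence $P$ is bounded and therefore so is the edge $e$. The main obstacle is the sign-and-scale case analysis in the middle paragraph, where the interplay between the integrality of $(m,n)$ and the infinitesimal conditions on $\gamma$ and $\theta$ must be extracted via Lemma~\ref{lemma:OrderedArithmetic}.
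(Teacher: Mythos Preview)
Your proof is correct and follows essentially the same approach as the paper's: analyze the supporting line of an edge through $(\theta,\gamma)$, plug in the endpoints of $e_0$, and invoke Lemma~\ref{lemma:OrderedArithmetic} to force the line to be horizontal. The only cosmetic differences are that you parametrize the supporting line by an integer normal $(m,n)$ rather than by slope (which lets you absorb the ``not vertical'' case into the sign analysis), and for boundedness you prove the stronger fact that $P$ itself is bounded via the face--recession identity, whereas the paper argues directly that a horizontal ray in $\rec(P)$ would carry $(0,\eps)$ outside $P$.
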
\begin{proof}
Given an edge of $P$ containing $(\theta,\gamma)$ let $L$ be its affine span and let $m$ be the slope of $L$. Note that $P$ contains the points $(-a,\eps)$ and $(b,\eps)$ which are on two different sides of the line $\{(x,y)\in R^2\mid y=\theta\}$, so $m$ is a well defined rational number. So now $L=\{(x,y)\in R^2\mid y=m(x-\theta)+\gamma\}$, and $P$ is contained in one of the closed half-spaces of $R^2$ defined by $L$. Because $(\theta,\eps)$ is in $P$ and $m(\theta-\theta)+\gamma>\eps$, we see that $P\subset\{(x,y)\in R^2\mid y\leq m(x-\theta)+\gamma\}$. Now from the fact that $(-a,\eps)$ is in $P$ we get that $\eps\leq m(-a-\theta)+\gamma$ so $m(a+\theta)\leq\gamma-\eps\leq\gamma$. Then because Lemma \ref{lemma:OrderedArithmetic} gives us that $\gamma\ll a+\theta$ we must have $m\leq 0$. Similarly, from having $(b,\eps)$ in $P$ we get $\eps\leq m(b-\theta)+\gamma$ so $-m(b-\theta)\leq\gamma-\eps\leq\gamma$. Then from Lemma \ref{lemma:OrderedArithmetic} we know $\gamma\ll b-\theta$ and so $-m\leq0$, i.e., $m\geq0$, and so $m=0$.

Since all edges of $P$ containing $(\theta,\gamma)$ have the same slope, there can only be one such edge. If this edge were unbounded, then $P$ would contain a ray of the form $(\theta,\gamma)+\{(tr,0)\mid t\in R_{\geq0}\}$ for some nonzero $r\in R$. Hence $\{(tr,0)\mid t\in R_{\geq0}\}$ would be contained in the recession cone $\rec P$ of $P$, which is impossible because $(0,\eps)$ is in $P$ but $(0,\eps)+\{(tr,0)\mid t\in R_{\geq0}\}$ would contain either $(-2a,\eps)$ or $(2b,\eps)$, depending on whether $r$ is positive or negative, but neither of these points is in $P$.
\end{proof}

We are now ready to give our example of an $R$-admissible fan in $R^2\times R_{\geq0}$ with no $R$-admissible completion. Recall that an ordered field is archimedean if for any positive $a,b\in R$ we have $a\lesssim b$. An ordered field is archimedean if and only if it admits an embedding into $\R$.

\begin{thm}\label{thm:HigherRankBadFan}
Let $R$ be a non-archimedean ordered field and pick positive $a,\delta\in R$ with $\delta\ll a$. Let $\Sigma$ be the $R$-admissible fan in $R^2\times R_{\geq0}$ whose maximal cones are $\{(x,y,t)\in R^2\times R_{\geq0}\mid x=0,y=\delta t\}$ and $\{(x,y,t)\in R^2\times R_{\geq0}\mid y=0,-at\leq x\leq at\}$. 
Then $\Sigma$ has no $R$-admissible completion.
\end{thm}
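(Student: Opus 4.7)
The plan is to assume for contradiction that $\nbar{\Sigma}$ is an $R$-admissible completion of $\Sigma$, and to analyze its slice at $t=1$. This slice is a complete polyhedral complex $\Pi$ in $R^2$ whose faces are $R$-rational. The cones $\sigma_2,\sigma_1\in\Sigma\subset\nbar{\Sigma}$ contribute, respectively, a $1$-face $E:=\conv((-a,0),(a,0))$ and a $0$-face $v:=(0,\delta)$ of $\Pi$. Since $\delta\ll a$, the vertex $v$ sits just above the interior of the long edge $E$, and the idea is to walk up the vertical segment $\{0\}\times[0,\delta]$ inside $\Pi$, applying Lemma \ref{lemma:ThisLineIsHorizontal} at each step to force geometric constraints that are ultimately incompatible with $v$ being a vertex.

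Since $E$ is a $1$-face of $\Pi$ interior to $R^2$, it is a face of exactly two $2$-faces of $\Pi$; let $P_0$ be the one in the upper half-plane. I will inductively construct a chain $P_0,P_1,\ldots$ of $2$-faces of $\Pi$ stacked along the $y$-axis, with heights $0=y_0<y_1<\cdots$ and bottom edges $F_i:=\conv((-a_i^-,y_i),(a_i^+,y_i))$ of $P_i$ (setting $F_0:=E$, so $a_0^\pm=a$), satisfying: $(0,y_i)$ lies in the relative interior of $F_i$; the open vertical segment $\{0\}\times(y_i,y_{i+1})$ lies in the interior of $P_i$; and the width bound $a_i^\pm\gtrsim a$ holds. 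Given the chain through $P_i$, the upward vertical ray from $(0,y_i)$ exits $P_i$ at a first point $(0,y_{i+1})$ with $y_{i+1}\leq\delta$, lying in the relative interior of some face $G$ of $P_i$. Since $y_{i+1}\leq\delta\ll a\lesssim a_i^\pm$, Lemma \ref{lemma:ThisLineIsHorizontal} applies to $P_i$ at $(\theta,\gamma)=(0,y_{i+1})$ with $\eps=y_i$, and forces every edge of $P_i$ through this point to have slope $0$. If $G$ is a $0$-face of $P_i$, then $(0,y_{i+1})$ is a vertex of $P_i$ where two distinct edges meet, both now forced horizontal, giving the sought contradiction. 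Otherwise $G$ is a $1$-face of $P_i$, which must then be a horizontal bounded edge $F_{i+1}$ with $(0,y_{i+1})$ in its interior. Applying Lemma \ref{lemma:ThisLineIsHorizontal} once more to $P_i$ at each endpoint of $F_{i+1}$---which is a vertex of $P_i$, hence not in the interior of a single edge---forces $a_{i+1}^\pm\gtrsim a$. Letting $P_{i+1}$ be the $2$-face of $\Pi$ on the opposite side of $F_{i+1}$ completes the inductive step.

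Because $\Pi$ is finite and the heights $y_i$ are strictly increasing and bounded by $\delta$, the chain must terminate in finitely many steps, and termination occurs precisely when the ``$G$ is a $0$-face'' case arises---at the latest when $y_{i+1}=\delta$, since $v$ itself is a $0$-face of $\Pi$. Either outcome yields the desired contradiction, so no $R$-admissible completion of $\Sigma$ exists. The main technical hurdle is propagating the width bound $a_i^\pm\gtrsim a$ through the induction, so that the hypothesis $\gamma,|\theta|\ll a_i^-,a_i^+$ of Lemma \ref{lemma:ThisLineIsHorizontal} remains valid for the bottom edge of the next polyhedron in the chain; this uses transitivity of $\lesssim$ together with the non-archimedean arithmetic underlying Lemma \ref{lemma:OrderedArithmetic}.
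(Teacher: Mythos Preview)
Your proposal is correct and follows essentially the same route as the paper: slice at $t=1$, walk up the vertical segment from $(0,0)$ to $(0,\delta)$ through successive $2$-faces of $\Pi$, and use Lemma~\ref{lemma:ThisLineIsHorizontal} both to force horizontal top edges and (via its contrapositive at the endpoints) to propagate a width bound, until the uniqueness clause of the lemma collides with a vertex having two edges. The only cosmetic differences are that the paper indexes the induction by the edges of the restriction $\Pi|_L$ rather than by the chain of $2$-faces, and that it carries the slightly weaker invariant $\delta\ll a_i,b_i$ in place of your $a_i^{\pm}\gtrsim a$; your invariant is in fact what the contrapositive of Lemma~\ref{lemma:ThisLineIsHorizontal} actually yields, and either version suffices.
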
\begin{proof}
Suppose that $\nbar{\Sigma}$ were an $R$-admissible completion of $\Sigma$, and let $\Pi$ be the projection to $R^2$ of the restriction of $\nbar{\Sigma}$ to $R^2\times\{1\}$, i.e., $\Pi$ is the set of nonempty polyhedra $P\subset R^2$ such that $P\times\{1\}=\sigma\cap(R^2\times\{1\})$ for some $\sigma\in\nbar{\Sigma}$. Then $\Pi$ is a complete $R$-rational polyhedral complex containing the vertex $(0,\delta)$ and the line segment between $(-a,0)$ and $(a,0)$. Let $L$ be the line segment between $(0,0)$ and $(0,\delta)$. The restriction of $\Pi$ to $L$ consists of edges $E_1,\ldots,E_m$ and vertices $(0,0)=v_0,v_1,\ldots,v_m=(0,\delta)$ such that for $i=1,\ldots,m$ $v_{i-1}$ and $v_i$ are faces of $E_i$. For each $i=0,\ldots,m$ write $v_i=(0,\gamma_i)$.

We now show by induction on $i\geq1$ that any $P\in\Pi$ such that $P\cap L=E_i$ has as a face the line segment between $(-a_i,\gamma_i)$ and $(b_i,\gamma_i)$ for some positive $a_i,b_i\in R$ with $\delta\ll a_i,b_i$. For the base case of $i=1$, consider such a $P$ and let $a_0=a$ and $b_0=a$ so we know that $\delta\ll a_0,b_0$. Since $F_0:=\conv((-a_0,\gamma_0),(b_0,\gamma_0))$ is in $\Pi$ and $P$ contains the point $v_0=(0,\gamma_0)$ which is in the relative interior of $F_0$, $F_0$ must be a face of $P$. We have that $v_1\in E_1=L\cap P$ so $v_1\in P$ and we claim that $v_1$ is in the boundary of $P$. To see this, we consider two cases as to whether $m=1$ or $m>1$. If $m=1$ then $v_1=(0,\delta)$ which we know is in $\Pi$, so because $P\in\Pi$ contains $v_1$, $v_1$ must be a vertex of $P$. For the case $m>1$, recall that any face of a finite intersection of polyhedra can be written as an intersection of faces of the original polyhedra. So because $v_1$ is a proper face of $E_1=L\cap P$ and $m>1$ gives us that $v_1$ is in the relative interior of $L$, $v_1$ must be contained in a proper face of $P$, i.e., $v_1$ is in the boundary of $P$. Also note that $\gamma_1\leq\delta\ll a_0,b_0$, so $\gamma_1\ll a_0,b_0$. So we can apply Lemma \ref{lemma:ThisLineIsHorizontal} to conclude that $P$ has a unique edge $F_1$ containing $(0,\gamma_1)$ which is bounded and whose affine span is a line of slope $0$. Hence we can write $F_1=\conv((-a_1,\gamma_1),(b_1,\gamma_1))$ for some non-negative $a_1,b_1\in R$. It remains only to show that $\delta\ll a_1,b_1$. Suppose that $a_1\lesssim \delta$. Then $a_1\lesssim\delta\ll a_0,b_0$ gives us that $|-a_1|=a_1\ll a_0,b_0$. Hence another application of Lemma \ref{lemma:ThisLineIsHorizontal} shows that $F_1$ is the unique edge of $P$ containing the vertex $(-a_1,\gamma_1)$. But $P$ is a 2-dimensional polyhedron as it is in $R^2$ and contains the three affinely independent points $(-a_0,\gamma_0)$, $(b_0,\gamma_0)$, and $(0,\gamma_1)$, and so every vertex of $P$ must be contained in two distinct edges of $P$. This contradiction shows that we must have $\delta\ll a_1$. The proof that $\delta\ll b_1$ is exactly analogous.

For the inductive step, note that by the inductive hypothesis $\Pi$ contains an edge $F_{i-1}:=\conv((-a_{i-1},\gamma_{i-1}),(b_{i-1},\gamma_{i-1}))$ for some positive $a_{i-1},b_{i-1}\in R$ with $\delta\ll a_{i-1},b_{i-1}$. The rest of the proof proceeds exactly as the proof of the base case upon replacing appropriate $1$s with $i$s and appropriate $0$s with $(i-1)$s.

In particular, $\Pi$ contains an edge $\conv((-a_m,\gamma_m),(b_m,\gamma_m))$ with $\gamma_m\ll a_m,b_m$. The point $(0,\gamma_m)=(0,\delta)$, which is in $\Pi$, is in the relative interior of this edge, contradicting the fact that $\Pi$ was a polyhedral complex. Hence there cannot be any $R$-admissible completion of $\Sigma$.
\end{proof}

%
%

\section{Completions of $\T$-toric varieties}\label{sec:CompletionsOfTToricVars}

In this section we consider completions of $\T$-toric varieties. We start by recalling, in Section \ref{subsec:BackgroundTToric}, the relevant background on $\T$-toric varieties from \cite{GublerGuideTrop} and \cite{GublerSoto}. In Section \ref{subsec:NormalCompletions} we use Theorem \ref{thm:AdmissibleCompletionsExist} to deduce Theorem \ref{thm:NormalTCompletionsExist}. In Section \ref{subsec:BadNormalization} we give examples of projective $\T$-toric varieties whose normalizations are not of finite type. Finally, in Section \ref{subsec:SemistableTToricVars} we use Example \ref{example:FanWOASimplicialCompletion} to show that there are semistable $\T$-toric varieties which do not have a semistable equivariant completion.

\subsection{Background on $\T$-toric varieties}\label{subsec:BackgroundTToric}
Throughout this section $K$ will be a field with a nontrivial valuation $v\fncolon K\to\R\cup\{\infty\}$, $K^\circ$ will be the valuation ring of $v$, and $\Gamma$ will be the value group of $v$. Let $\T$ be a split torus over $K^\circ$ with character lattice $M$ and cocharacter lattice $N$. 
A \emph{$\T$-toric scheme} is an integral scheme $\calX$ which is separated and flat over $K^\circ$ such that the generic fiber $\calX_\eta$ of $\calX$ contains $\T_{\eta}$ as a dense open subset and the action of $\T_\eta$ on itself by translation extends to an action of $\T$ on $\calX$. A \emph{$\T$-toric variety} is a $\T$-toric scheme which is of finite type over $K^\circ$.

We now recall the correspondence between $\Gamma$-admissible fans in $N_{\R}\times\R_{\geq0}$ and normal $\T$-toric varieties. Given a $\Gamma$-admissible cone $\sigma\subset N_{\R}\times\R_{\geq0}$ we consider the $K^\circ$-algebra
$$K[M]^\sigma:=\left\{\sum_{u\in M}\lambda_u\chi^u\in K[M]\;\middle|\;(\forall(w,t)\in\sigma)\angbra{u,w}+tv(\lambda_u)\geq0\right\}.$$
The affine normal $\T$-toric scheme corresponding to $\sigma$ is $\calU_\sigma:=\spec K[M]^\sigma$. 
If $\tau$ is a face of $\sigma$ then the inclusion $K[M]^\sigma\subset K[M]^\tau$ induces a $\T$-equivariant open immersion. 
If $\Sigma$ is a $\Gamma$-admissible fan in $N_{\R}\times\R_{\geq0}$ then gluing $\calU_\sigma$ for $\sigma\in\Sigma$ along these open immersions gives a normal $\T$-toric scheme $\calY(\Sigma)$. When we need to keep track of the valuation ring, we will denote this by $\calY(\Sigma,K^\circ)$. This is sometimes necessary because for some fans $\Sigma$ and valued extensions $L/K$, $\calY(\Sigma,L^\circ)$ does not coincide with the base change $\calY(\Sigma,K^\circ)_{L^\circ}$ of $\calY(\Sigma,K^\circ)$ to $L^\circ$; see \cite[Propositions 7.11 and 7.12]{GublerGuideTrop}.

If $\Gamma$ is discrete then Gordan's lemma gives us that $\sigma^\vee\cap(M\times\Gamma)$ is a finitely generated monoid, so $K[M]^\sigma$ is a finitely generated $K^\circ$-algebra and $\calU_\sigma$ is a $\T$-toric variety. Let $P_\sigma\subset N_{\R}$ be the projection to $N_{\R}$ of $\sigma\cap(N_\R\times\{1\})$, so $\sigma\cap(N_\R\times\{1\})=P_\sigma\times\{1\}$. 
If $\Gamma$ is not discrete then \cite[Proposition 6.9]{GublerGuideTrop} shows that $K[M]^\sigma$ is finitely generated as a $K^\circ$-algebra if and only if all of the vertices of $P_\sigma$ are in $N_{\Gamma}$. 
If $P_\sigma$ is empty then $\sigma$ is a rational cone in $N_{\R}\times\{0\}$ and $\calU_\sigma$ is the corresponding toric variety over $K$. If $P_\sigma$ is not empty, let $w_1,\ldots,w_m$ be the vertices of $P_\sigma$. For each $w_i$ fix a finite generating set $\{u_{ij}\}$ for $LC_{w_i}(P_\sigma)^\vee\cap M$ where $LC_{w_i}(P_\sigma)=\R_{\geq0}(P_\sigma-w_i)$ is the local cone of $P_\sigma$ at $w_i$. If the $w_i$ are all in $N_{\Gamma}$ we can pick $\lambda_{ij}\in K$ such that $v(\lambda_{ij})=-\angbra{u_{ij},w_i}$, and then $K[M]^\sigma$ is generated as a $K^\circ$-algebra by the terms $\lambda_{ij}\chi^{u_{ij}}$; see the last paragraph of the proof of \cite[Proposition 6.7]{GublerGuideTrop}.

Given a $\Gamma$-admissible fan $\Sigma$ in $N_{\R}\times\R_{\geq0}$, the \emph{restriction of $\Sigma$ to $N_{\R}\times\{1\}$} is the polyhedral complex $\Sigma|_{N_{\R}\times\{1\}}$ consisting polyhedra of the form $\sigma\cap N_{\R}\times\{1\}$ for $\sigma\in\Sigma$. With this language, we have the following bijection, first established in \cite[IV, section 3]{ToroidalEmbeddings} in the case where $\Gamma$ is discrete and in \cite[Theorem 3]{GublerSoto} in the case where $\Gamma$ is not discrete.

\begin{thm}
If $\Gamma$ is discrete then $\Sigma\mapsto\calY(\Sigma)$ gives a bijection from the set of $\Gamma$-admissible fans in $N_{\R}\times\R_{\geq0}$ to the set of isomorphism classes of normal $\T$-toric varieties. If $\Gamma$ is not discrete then $\Sigma\mapsto\calY(\Sigma)$ gives a bijection from the set of $\Gamma$-admissible fans $\Sigma$ in $N_{\R}\times\R_{\geq0}$ such that all vertices of $\Sigma|_{N_{\R}\times\{1\}}$ are in $N_{\Gamma}\times\{1\}$ to the set of isomorphism classes of normal $\T$-toric varieties.
\end{thm}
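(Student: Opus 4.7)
The plan is to reduce the bijection to an affine classification statement, then glue. More precisely, I would first prove: an affine normal $\T$-toric variety $\calU$ is isomorphic to $\calU_\sigma$ for a unique $\Gamma$-admissible cone $\sigma$ (satisfying the vertex-integrality condition in the non-discrete case), and this correspondence is natural with respect to $\T$-equivariant open immersions. Given this affine classification, injectivity and surjectivity of $\Sigma \mapsto \calY(\Sigma)$ follow by gluing.

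For the well-definedness direction, I would first check that given a $\Gamma$-admissible fan $\Sigma$ (with the vertex condition when $\Gamma$ is non-discrete), the scheme $\calY(\Sigma)$ is indeed a normal $\T$-toric variety. Normality is local and reduces to each $\calU_\sigma = \Spec K[M]^\sigma$; this ring is the intersection of the valuation subrings of $K(M)$ cut out by the inequalities defining $\sigma$, so it is integrally closed. Finite type follows from the finite generation discussion recalled immediately before the theorem (invoking Gordan's lemma when $\Gamma$ is discrete, and \cite[Proposition 6.9]{GublerGuideTrop} when not).

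For the affine classification, given $\calU = \Spec A$ normal with a $\T$-action extending translation on $\T_\eta$, the $\T$-action induces an $M$-grading $A = \bigoplus_{u \in M} A_u$ as a $K^\circ$-module with $A_u \subset K \cdot \chi^u$. Each nonzero $A_u$ is a $K^\circ$-submodule of $K\chi^u$; normality and the fact that $A \otimes_{K^\circ} K = K[M]$ (since $\calU_\eta = \T_\eta$) force $A_u = \{\lambda \chi^u \mid v(\lambda) \geq c_u\}$ for a unique $c_u \in \Q\Gamma \cup \{\infty\}$, with $c_u = \infty$ iff $A_u = 0$. Multiplicativity gives $c_{u+u'} \leq c_u + c_{u'}$, and using that $A$ is finitely generated as a $K^\circ$-algebra I would extract a finite subset $L \subset M \times \Gamma$ of pairs $(u, c_u)$ whose monoid-span recovers all $(u, c_u)$ with $c_u < \infty$. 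Setting $\sigma = L^\vee \subset N_\R \times \R_{\geq 0}$ then gives a $\Gamma$-admissible cone with $A = K[M]^\sigma$, and the vertex-integrality condition in the non-discrete case falls out of the finite generation criterion. Uniqueness of $\sigma$ follows because $\sigma$ is recovered as the set of $(w,t)$ for which $\angbra{u,w}+tc_u \geq 0$ whenever $A_u \neq 0$.

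For the global case, the main obstacle is an equivariant Sumihiro-type theorem: any normal $\T$-toric variety admits a cover by $\T$-invariant affine open subsets. This is delicate over $K^\circ$, especially when $\Gamma$ is non-discrete so $K^\circ$ is non-noetherian; one argues by analyzing $\T$-orbit closures and descending a cover of the closed fiber, as in \cite[IV, §3]{ToroidalEmbeddings} for the discrete case and \cite[Theorem 3]{GublerSoto} for the general case. Given such a cover $\{\calU_i\}$, the affine case assigns each $\calU_i$ a $\Gamma$-admissible cone $\sigma_i$, and $\T$-equivariance of the open immersions $\calU_i \cap \calU_j \hookrightarrow \calU_i$ translates (via the affine uniqueness) into $\sigma_i \cap \sigma_j$ being a common face of $\sigma_i$ and $\sigma_j$. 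Thus the $\sigma_i$ form a $\Gamma$-admissible fan $\Sigma$ with $\calX \cong \calY(\Sigma)$, establishing surjectivity, and injectivity follows from the uniqueness in the affine case together with the intrinsic recovery of the cover from the $\T$-orbit structure of $\calX$. The hardest steps are the equivariant affine cover and, within the affine case, the passage from the grading data $(c_u)$ to a finitely-presented $\Gamma$-admissible cone — both carried out in the cited references.
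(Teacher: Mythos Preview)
The paper does not give its own proof of this theorem: it is stated as background, attributed to \cite[IV, section 3]{ToroidalEmbeddings} in the discrete case and to \cite[Theorem 3]{GublerSoto} in the non-discrete case. So there is nothing in the paper to compare your argument against. Your outline is a reasonable sketch of how those references proceed, and you correctly isolate and cite the two genuinely hard ingredients --- the Sumihiro-type $\T$-invariant affine cover over $K^\circ$, and the passage from the $M$-grading data to a $\Gamma$-admissible cone.

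One small correction to your affine step: you assert $\calU_\eta=\T_\eta$ and hence $A\otimes_{K^\circ}K=K[M]$. In general the generic fiber of an affine normal $\T$-toric variety is only required to \emph{contain} $\T_\eta$ as a dense open subset, so $A\otimes_{K^\circ}K$ is $K[S]$ for a saturated submonoid $S\subset M$ (corresponding to the rational cone $\sigma\cap(N_\R\times\{0\})$), not $K[M]$ itself. The rest of your grading argument goes through with $A_u\neq0$ only for $u\in S$.
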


\begin{remark}
For any $\Gamma$-admissible fan $\Sigma$ all of the vertices of $\Sigma|_{N_{\R}\times\{1\}}$ are in $N_{\Q\Gamma}\times\{1\}$. In particular, if $\Gamma$ is divisible then $\Sigma\mapsto\calY(\Sigma)$ gives a bijection from the set of $\Gamma$-admissible fans in $N_{\R}\times\R_{\geq0}$ to the set of isomorphism classes of normal $\T$-toric varieties.
\end{remark}

\subsection{Existence of normal completions}\label{subsec:NormalCompletions}

Let $L/K$ be a finite field extension such that the valuation $v$ extends uniquely to a valuation $L\to\R\cup\{\infty\}$. 
Let $k$ and $\ell$ be the residue fields of $K$ and $L$, respectively. 
We refer to \cite[II, \S7]{Neukirch} for the definition of what it means for the extension $L/K$ to be \emph{totally ramified}. We will only need to know that if $[\ell:k]=1$ then $L/K$ is totally ramified, and this follows easily from the definition. 

Although the following proposition is elementary and likely known to experts, we were unable to easily find a reference to it in the literature, and so we include a proof.

\begin{prop}\label{prop:EnoughTotallyRamifiedExtensions}
Let $K$ be a field with a valuation $v\fncolon K\to\R\cup\{\infty\}$. Let $\Gamma=v(K^\times)$ be the value group of $K$, and suppose $\Gamma'$ is an additive subgroup of $\R$ which contains $\Gamma$ such that $(\Gamma':\Gamma)$ is finite. Then there is a finite separable totally ramified extension $L/K$ such that the value group of $L$ is $\Gamma'$.
\end{prop}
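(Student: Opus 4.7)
The plan is to induct on the index $(\Gamma':\Gamma)$, reducing to the construction of a single prime-degree separable extension at each step. If $\Gamma=0$ then $\Gamma'=0$ as well and we take $L=K$; otherwise, fix a prime $n$ dividing $(\Gamma':\Gamma)$ and a lift $\alpha\in\Gamma'$ of an element of order $n$ in $\Gamma'/\Gamma$, so that $n\alpha\in\Gamma$ while $\alpha\notin\Gamma$. It suffices to construct a finite separable totally ramified extension $L/K$ whose value group is $\Gamma+\Z\alpha$, for then $(\Gamma':\Gamma+\Z\alpha)=(\Gamma':\Gamma)/n$, and applying the inductive hypothesis to $L$ produces a further extension $L'/L$ of the same kind with value group $\Gamma'$; the composite $L'/K$ is again finite separable totally ramified because each of these three properties is preserved under composition.

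To build $L$, I will split on whether $n$ equals $\mathrm{char}(K)$. If $n\neq\mathrm{char}(K)$, pick $c\in K^\times$ with $v(c)=n\alpha$ and let $L=K(\beta)$ for $\beta$ a root of $x^n-c$; this polynomial is separable since $n$ is invertible in $K$. If $n=\mathrm{char}(K)=p$, then since $\Gamma\neq0$ I may, after replacing $\alpha$ by $\alpha+N\gamma_0$ for sufficiently large $N$ and some positive $\gamma_0\in\Gamma$, assume $\alpha>0$; I then pick $a\in K^\times$ with $v(a)=-p\alpha$ and let $\beta$ be a root of the Artin--Schreier polynomial $x^p-x-a$, which is separable because its derivative is $-1$.

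Let $w$ be any extension of $v$ to $L=K(\beta)$. In the Kummer case, $\beta^n=c$ immediately gives $w(\beta)=\alpha$. In the Artin--Schreier case, $\beta^p-\beta=a$ combined with $w(a)=-p\alpha<0$ forces $w(\beta)<0$, hence $pw(\beta)<w(\beta)$, and equating dominant valuations yields $w(\beta)=-\alpha$. In either case the value group $w(L^\times)$ contains $\Gamma+\Z\alpha$, which has index at least $n$ over $\Gamma$, so the ramification index $e_w$ is at least $n$; on the other hand $[L:K]\leq n$ by construction. The fundamental inequality $\sum_j e_{w_j}f_{w_j}\leq[L:K]$, summed over all extensions of $v$ to $L$, then pinches everything: $[L:K]=n$, $e_w=n$, $f_w=1$, and $w$ is the unique extension of $v$. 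Thus $L/K$ is a finite separable extension of degree $n$ with trivial residue field extension and value group $\Gamma+\Z\alpha$, hence totally ramified by the criterion recalled in the excerpt. I expect the main obstacle to be the equal positive characteristic case, where Kummer theory breaks down and must be replaced by Artin--Schreier as its separable analog; the rest of the argument is bookkeeping around the fundamental inequality.
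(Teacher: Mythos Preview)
Your argument is correct and takes a genuinely different route from the paper's. The paper does \emph{not} split on characteristic: it first writes down the naive Kummer-type polynomial $x^n-a$ (with $n=(\Gamma':\Gamma)$, not necessarily prime), observes that any root must have valuation $\alpha$, and uses irreducibility over the completion $\widehat{K}$ together with Neukirch's Proposition~II.8.2 to get uniqueness of the extension, then the degree inequality to pin down the value group and residue field. Separability is achieved at the end by a uniform perturbation: replacing $x^n-a$ by $x^n-bx-a$ with $b\neq 0$ and $v(b)\geq (n-1)\alpha$, which keeps the Newton-polygon argument intact (every root still has valuation $\alpha$) while guaranteeing separability via the nonzero linear term. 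Your approach instead reduces to prime degree and invokes the classical Kummer/Artin--Schreier dichotomy, deducing irreducibility, uniqueness of the extension, and triviality of the residue extension in one stroke from the fundamental inequality $\sum_j e_{w_j}f_{w_j}\leq[L:K]$. The paper's trick is slicker in that a single polynomial handles all characteristics without case analysis, at the cost of citing completion theory; yours is more self-contained, relying only on the standard fundamental inequality and the two most familiar cyclic-extension constructions.
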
\begin{proof}
We will prove the slightly stronger statement that under the hypotheses above there is a finite separable field extension $L/K$ such that $v$ extends uniquely to $L$, the value group of $L$ is $\Gamma'$, and the corresponding extension on residue fields is the trivial extension. We initially show how to find such an extension which is not necessarily separable, and then show how to modify the construction to make $L$ separable.

We first consider the case where $\Gamma'$ is of the form $\Gamma'=\Gamma+\Z\alpha$. 
Note that $n:=(\Gamma':\Gamma)$ is the least positive integer such that $n\alpha$ is in $\Gamma$, and pick some $a\in K$ with $v(a)=n\alpha$. Any root of $f(x):=x^n-a$ in any valued extension of $K$ has valuation $\alpha$ so we find that $f(x)$ is irreducible in $K[x]$. Let $L=K(c)$ where $c$ is a root of $f$.

Since the completion $\widehat{K}$ of $K$ also has value group $\Gamma$, we also find that $f(x)$ is irreducible in $\widehat{K}[x]$. By \cite[II, Proposition 8.2]{Neukirch}, this implies that $v$ extends uniquely to a valuation on $L$. Let $\Gamma_L$ be the value group of $L$ and $\ell$ the residue field of $L$. Because $c$ has valuation $\alpha$ we have $\Gamma_L\geq\Gamma+\Z\alpha=\Gamma'$. We also know from \cite[II, Proposition 6.8]{Neukirch} that $[L:K]\geq(\Gamma_L:\Gamma)[\ell:k]$ where $k$ is the residue field of $K$. Thus
$$n=[L:K]\geq(\Gamma_L:\Gamma)[\ell:k]=(\Gamma_L:\Gamma')(\Gamma':\Gamma)[\ell:k]
=n(\Gamma_L:\Gamma')[\ell:k]$$
and so $(\Gamma_L:\Gamma')=1=[\ell:k]$, i.e., $\Gamma_L=\Gamma'$ and $\ell=k$.

We now prove the result for an arbitrary $\Gamma'$ by induction on $(\Gamma':\Gamma)$. The base case of $(\Gamma':\Gamma)=1$ is trivial. For the inductive step, suppose $(\Gamma':\Gamma)>1$ and pick some $\alpha\in \Gamma'\sdrop\Gamma$. By the previous case, there is a finite field extension $L_1/K$ such that $v$ extends uniquely to a valuation $v_1$ on $L_1$, the value group of $L_1$ is $\Gamma+\Z\alpha$ and the corresponding extension of residue fields is trivial. Since $\alpha\notin\Gamma$ we have $(\Gamma':\Gamma)>(\Gamma':\Gamma+\Z\alpha)$, so by the inductive hypothesis there is a finite field extension $L/L_1$ such that $v_1$ extends uniquely to a valuation on $L$, the value group of $L$ is $\Gamma'$, and the corresponding extension of residue fields is trivial. Then $L/K$ is an extension of $K$ with the desired properties.

Finally, to see that we can find such an extension $L/K$ which is also separable, note that in the first case the only feature of the polynomial $f(x)=x^n-a$ which we needed is that any root of $f(x)$ in a valued extension of $K$ has to have value $\alpha$. 
By \cite[II, Proposition 6.3]{Neukirch}, the same applies if we use $f(x)=x^n-bx-a$ for any $b\in K$ with $v(b)\geq(n-1)\alpha$. Thus, picking a nonzero $b\in K$ with $v(b)\geq(n-1)\alpha$, the proof above goes through when we replace $f(x)=x^n-a$ with $f(x)=x^n-bx-a$. 
In particular, we see that $f(x)$ is irreducible, and so, because $f(x)$ contains a linear term, $f(x)$ is separable. It follows that the field extensions constructed above are all separable.
\end{proof}

\begin{remark}
If we allow infinite separable totally ramified extensions, then the same result is true if we relax the condition that $(\Gamma':\Gamma)$ is finite to require only that $\Gamma\leq\Gamma'\leq\Q\Gamma$. The only difference in the proof is that the induction needs to be replaced by a Zorn's lemma argument or a transfinite induction argument.
\end{remark}

\begin{proof}[Proof of Theorem \ref{thm:NormalTCompletionsExist}]
Let $\Sigma$ be the $\Gamma$-admissible fan in $N_{\R}\times\R_{\geq0}$ such that $\calX\cong\calY(\Sigma)$. By Theorem \ref{thm:AdmissibleCompletionsExist} there is a complete $\Gamma$-admissible fan $\nbar{\Sigma}$ containing $\Sigma$. If $\Gamma$ is discrete or divisible then $\nbar{\calX}:=\calY(\nbar{\Sigma})$ is a normal $\T$-toric variety and by \cite[Proposition 11.8]{GublerGuideTrop} $\nbar{\calX}$ is proper over $K^\circ$. By the construction of the toric scheme corresponding to a $\Gamma$-admissible fan, the inclusion $\Sigma\subset\nbar{\Sigma}$ induces a $\bbT$-equivariant open immersion $\calX\into\nbar{\calX}$. So we assume from now on that $\Gamma$ is neither discrete nor divisible.

Let $\Sigma_1$ and $\nbar{\Sigma}_1$ be the restrictions of $\Sigma$ and $\nbar{\Sigma}$, respectively, to $N_{\R}\times\{1\}$. All of the vertices of $\Sigma_1$ are in $N_\Gamma\times\{1\}$ and the $\Gamma$-admissibility of $\nbar{\Sigma}$ gives us that all of the vertices of $\nbar{\Sigma}_1$ are in $N_{\Q\Gamma}\times\{1\}$. Since $\nbar{\Sigma}_1$ has finitely many vertices there is a subgroup $\Gamma'$ of $\Q\Gamma$ containing $\Gamma$ such that $(\Gamma':\Gamma)$ is finite and all of the vertices of $\nbar{\Sigma}_1$ are in $N_{\Gamma'}\times\{1\}$. 
By Proposition \ref{prop:EnoughTotallyRamifiedExtensions} there is a finite separable totally ramified extension $L/K$ such that the value group of $L$ is $\Gamma'$.

Since the vertices of $\Sigma_1$ are in $N_{\Gamma}\times\{1\}$, \cite[Proposition 7.12]{GublerGuideTrop} gives us that the base change $\calX_{L^\circ}$ of $\calX$ to the valuation ring $L^\circ$ of $L$ is isomorphic to $\calY(\Sigma,L^\circ)$ as $\T_{L^\circ}$-toric varieties. Let $\nbar{\calX}:=\calY(\nbar{\Sigma},L^\circ)$ which, by \cite[Proposition 11.8]{GublerGuideTrop}, is proper over $L^\circ$. Finally, the construction of the $\bbT_{L^\circ}$-toric varieties corresponding to $\Gamma'$-admissible fans shows that the inclusion $\Sigma\subset\nbar{\Sigma}$ induces a $\bbT_{L^\circ}$-equivariant open immersion $\calX_{L^\circ}\into\nbar{\calX}$.
\end{proof}

\subsection{Projective $\T$-toric varieties whose normalizations are not of finite type}\label{subsec:BadNormalization}

We now give an explicit example which shows that there are projective $\T$-toric varieties whose normalizations are not finite type, so long as the value group $\Gamma$ is neither discrete nor divisible. In particular, this shows that taking the normalization of an arbitrary equivariant completion of a normal $\bbT$-toric variety will not always give a normal equivariant completion of the original variety because this normalization may not be of finite type.

Throughout this subsection we assume that $\Gamma$ is neither discrete nor divisible. Hence there is a positive integer $r$ such that $r\Gamma$ is not all of $\Gamma$, so we can pick some positive $\gamma\in\Gamma\sdrop r\Gamma$. Fix a positive integer $n$ and let $M=\Z^n$ so $\T=\G_{m,K^\circ}^{n}$. We identify $\T$ with a subtorus of $\G_{m,K^\circ}^{2n+1}/\G_{m,K^\circ}\subset\pp_{K^\circ}^{2n}$ via the morphism $(z_1,\ldots,z_n)\mapsto(1:z_1^r:z_1^{r+1}:z_2^r:z_2^{r+1}:\cdots:z_n^r:z_n^{r+1})$. In particular, this gives an action of $\T$ on $\pp_{K^\circ}^{2n}$. Fix some $\pi\in K^\circ$ with valuation $\gamma$, let $y:=(\pi:1:1:\cdots:1)\in\pp^{2n}(K)$, and let $\calY:=\nbar{T\cdot y}$, the closure in $\pp_{K^\circ}^{2n}$ of the orbit of $y$. Note that $\calY$ is a $\T$-toric variety. For the sake of concreteness we point out that a routine but tedious computation shows that $\calY$ is the subscheme of $\pp_{K^\circ}^{2n}$ cut out by the ideal $(\pi x_{2i-1}^{r+1}-x_{2i}^rx_0,x_{2i-1}^{r+1}x_{2j}^r-x_{2j-1}^{r+1}x_{2i}^r\mid i,j=1,\ldots,n)$.

\begin{thm}\label{thm:BadNormalization}
The normalization of $\calY$ is not of finite type over $K^\circ$.
\end{thm}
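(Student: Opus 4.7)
The plan is to work on the affine open $\calY_0 := \calY \cap \{x_0 \neq 0\}$ and to identify its normalization as the affine normal $\T$-toric scheme $\calU_\sigma$ attached to an explicit $\Gamma$-admissible cone $\sigma$ whose polyhedron $\sigma \cap (N_\R \times \{1\})$ has a vertex outside $N_\Gamma \times \{1\}$. By the classification recalled in Section \ref{subsec:BackgroundTToric}, such a $\calU_\sigma$ is not of finite type over $K^\circ$, so neither is the normalization of $\calY$.

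First, setting $y_j := x_j/x_0$, the coordinate ring $A$ of $\calY_0$ is cut out in $K^\circ[y_1, \ldots, y_{2n}]$ by the equations $\pi y_{2i-1}^{r+1} = y_{2i}^r$. The parametrization of the $\T$-orbit identifies $y_{2i-1}$ with $\pi^{-1}\chi^{re_i}$ and $y_{2i}$ with $\pi^{-1}\chi^{(r+1)e_i}$, realizing $A$ as the $K^\circ$-subalgebra of $K(\T)=K(\chi^{e_1},\ldots,\chi^{e_n})$ generated by these monomials. I would take the $\Gamma$-admissible cone
$$\sigma := \{(w,t) \in N_\R\times\R_{\geq 0} : r\angbra{e_i, w} \geq \gamma t \text{ for all } i = 1, \ldots, n\}.$$
The generators $\pi^{-1}\chi^{re_i}$ and $\pi^{-1}\chi^{(r+1)e_i}$ correspond to $(re_i,-\gamma)$ and $((r+1)e_i,-\gamma)$ in $\sigma^\vee\cap(M\times\Gamma)$ (the second uses that $\sigma$ forces $w_i\geq 0$), so $A\subset K[M]^\sigma$.

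The key step is to show that $K[M]^\sigma$ is the integral closure of $A$ in $K(\T)$. A monomial $\lambda\chi^u$ lies in $K[M]^\sigma$ exactly when $u\in M_{\geq 0}$ componentwise and $v(\lambda)\geq -|u|\gamma/r$, where $|u|:=\sum_i u_i$. For such a monomial, the identity
$$(\lambda\chi^u)^r = \lambda^r \pi^{|u|}\prod_{i=1}^n y_{2i-1}^{u_i}$$
has coefficient $\lambda^r \pi^{|u|}$ of valuation $rv(\lambda)+|u|\gamma\geq 0$, so $\lambda^r\pi^{|u|}\in K^\circ$ and the displayed identity gives a monic polynomial relation over $A$ satisfied by $\lambda\chi^u$. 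Hence $K[M]^\sigma$ is integral over $A$; since it is normal (as the coordinate ring of the normal affine $\T$-toric scheme $\calU_\sigma$) and has the same fraction field $K(\T)$ as $A$, it is the integral closure.

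Finally, the polyhedron $\sigma\cap(N_\R\times\{1\})=\{w\in N_\R:w_i\geq\gamma/r \text{ for all } i\}$ is a translated orthant with unique vertex $(\gamma/r,\ldots,\gamma/r)$, which is not in $N_\Gamma$ since $\gamma/r\notin\Gamma$ by the choice $\gamma\in\Gamma\sdrop r\Gamma$. By \cite[Proposition 6.9]{GublerGuideTrop}, $K[M]^\sigma$ is not finitely generated as a $K^\circ$-algebra; hence the normalization of $\calY_0$, and therefore that of $\calY$, is not of finite type over $K^\circ$. The main obstacle is the identification of $K[M]^\sigma$ as the integral closure of $A$, which hinges on the explicit integrality computation above together with the normality of $\calU_\sigma$ from the Gubler-Soto construction.
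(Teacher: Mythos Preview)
Your argument is correct and is essentially the same as the paper's: you work on the same affine chart $\{x_0\neq0\}$, identify the integral closure as the algebra of monomials $\lambda\chi^u$ with $u\in\Z_{\geq0}^n$ and $v(\lambda)\geq -|u|\gamma/r$ via the identical $r$th-power integrality relation, and then conclude non-finite-generation. The only difference is packaging: the paper proves normality of this algebra directly (as the intersection of a polynomial ring with a valuation ring) and proves non-finite-generation by a short graded-component argument, whereas you recognize the algebra as $K[M]^\sigma$ for the cone $\sigma=\{(w,t):rw_i\geq\gamma t\}$ and invoke the Gubler--Soto normality together with \cite[Proposition~6.9]{GublerGuideTrop}. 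This toric repackaging is exactly the alternative route the paper itself sketches in the remark following the theorem.
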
\begin{proof}
Note that $A:=K^\circ[\frac{1}{\pi}z_1^r,\frac{1}{\pi}z_1^{r+1},\frac{1}{\pi}z_2^r,\frac{1}{\pi}z_2^{r+1}\ldots,\frac{1}{\pi}z_n^r,\frac{1}{\pi}z_n^{r+1}]$ is the coordinate ring of the open affine subset of $\calY$ where the $x_0$ coordinate of $\pp_{K^\circ}^{2n}$ does not vanish. Hence it suffices to show that the integral closure $\widetilde{A}$ of $A$ in $K(z_1,\ldots,z_n)$, the field of fractions of $A$, is not finitely generated as a $K^\circ$-algebra.

We claim that $\widetilde{A}$ is $B:=K^\circ[\{\lambda z^I\mid \lambda\in K, I=(i_1,\ldots,i_n)\in\Z^n, |I|\frac{\gamma}{r}+v(\lambda)\geq0, i_j\geq0\}]$ where $|I|:=i_1+\cdots+i_n$. First, note that any term $\lambda z^I$ with $|I|\frac{\gamma}{r}+v(\lambda)\geq0$ and $i_j\geq0$ is in $\widetilde{A}$ because it is a zero of the monic polynomial $t^r-(\lambda^r\pi^{|I|})\prod_{j=1}^n(\frac{1}{\pi}z_j^r)^{i_j}$ in $A[t]$. Thus $B\subset\widetilde{A}$. For the other inclusion, it suffices to show that $B$ is integrally closed in $K(z_1,\ldots,z_n)$. The valuation $\sum_{\ell=1}^m \lambda_\ell z^{I_\ell}\mapsto\min\{|I_\ell|\frac{\gamma}{r}+v(\lambda_\ell)\mid \ell=1,\ldots,m\}$ on $K[z_1,\ldots,z_n]$ extends to a valuation $w$ on $K(z_1,\ldots,z_n)$ and $B$ is the intersection of $K[z_1,\ldots,z_n]$ and the valuation ring of $w$. Thus $B$, as the intersection of two integrally closed subrings of $K(z_1,\ldots,z_n)$, is integrally closed in $K(z_1,\ldots,z_n)$.

To see that $\widetilde{A}$ is not finitely generated as a $K^\circ$-algebra, note that because $\widetilde{A}$ is $(\Z_{\geq0})^n$-graded, if it had a finite generating set then it would have one consisting of terms $\lambda z^I$. But $\widetilde{A}$ cannot have such a generating set because the homogeneous component of $\widetilde{A}$ of multi-degree $I=(1,0,0,\ldots,0)$ is $\{\lambda z_1\mid \lambda\in K, v(\lambda)\geq-\frac{\gamma}{r}\}$ which is not finitely generated as a $K^\circ$-module.
\end{proof}

\begin{remark}
Theorem \ref{thm:BadNormalization} can also be proven using the ideas of \cite[Proposition 2.9]{Soto}. Namely, the proof of that proposition can be used to show that the normalization $\widetilde{\calY}$ of $\calY$ is the $\T$-toric scheme corresponding to the fan whose maximal faces are
\begin{align*}
&\left\{ (w,t)\in N_{\R}\times\R_{\geq0}\ \middle|\ w_j\geq\frac{\gamma}{r}t\text{ for }j=1,\ldots,n\right\},\\
&\left\{(w,t)\in N_{\R}\times\R_{\geq0}\ \middle|\ 0\leq w_i\leq \frac{\gamma}{r}t, w_i\leq w_j\text{ for }j\neq i\right\}\text{ for }i=1,\ldots,n,\\
\intertext{and}
&\left\{(w,t)\in N_{\R}\times\R_{\geq0}\ \middle|\ w_i\leq 0, w_i\leq w_j\text{ for }j\neq i\right\}\text{ for }i=1,\ldots,n.
\end{align*}
Then \cite[Proposition 6.9]{GublerGuideTrop} can be used to deduce that $\widetilde{\calY}$ is not of finite type over $K^\circ$.
\end{remark}

We can view the generic fiber $\calX=\T_\eta$ of $\T$ as a normal $\T$-toric variety and $\calY$ as an equivariant completion of $\calX$. Then we have that $\calY$ is an equivariant completion of $\calX$ whose normalization is not a normal equivariant completion of $\calX$.

\subsection{Semistable $\T$-toric varieties with no semistable equivariant completion}\label{subsec:SemistableTToricVars}

Recall that a scheme $\calX$ over $K^\circ$ is called \emph{semistable} if every point of $\calX$ has an \'{e}tale neighborhood which admits an \'{e}tale morphism to a \emph{model semistable $K^\circ$-scheme} $\Spec K^\circ[x_0,\ldots,x_n]/(x_0\cdots x_m-\pi)$ with $0\leq m\leq n$ and $\pi\in K^\circ\sdrop\{0\}$. We say that $\calX$ is \emph{strictly semistable} if the \'{e}tale neighborhoods as above can be chosen to be Zariski open neighborhoods.

We note that semistable $K^\circ$-schemes are normal. Towards seeing this we first show that any model semistable $K^\circ$-schemes is normal by exhibiting it as the normal $\T$-toric variety corresponding to a $\Gamma$-admissible cone. 
Given a model semistable $K^\circ$-scheme $\Spec K^\circ[x_0,\ldots,x_n]/(x_0\cdots x_m-\pi)$ let $\gamma=v(\pi)$ and $N=\Z^n$. Let $\sigma=\{(w_1,\ldots,w_n,t)\in N_{\R}\times\R_{\geq0}\mid w_i\geq 0(\forall i=1,\ldots,n), w_1+\cdots+w_m\leq\gamma t\}$. 
Then a quick computation of the vertices of $\sigma\cap(N_{\R}\times\{1\})$ and their local cones shows that $K[M]^\sigma=K^\circ[\pi x_1^{-1}\cdots x_m^{-1},x_1,\ldots,x_n]\cong K^\circ[x_0,\ldots,x_n]/(x_0\cdots x_m-\pi)$, so $\Spec K^\circ[x_0,\ldots,x_n]/(x_0\cdots x_m-\pi)\cong \calU_\sigma$ is normal. 
Finally, the normality of arbitrary semistable $K^\circ$-schemes follows from the normality of model semistable $K^\circ$-schemes because normality is local in the \'{e}tale topology; see \cite[Tags 034F and 0347]{Stacks}.

We will see that the $\T$-toric variety corresponding to the fan constructed in Example \ref{example:FanWOASimplicialCompletion} is strictly semistable but does not admit a semistable equivariant completion. In order to show that such completions do not exist we will use the following proposition.

\begin{prop}\label{prop:SemistableGivesBoundedSimplices}
Let $\calX$ be a semistable $\T$-toric variety and let $\Sigma$ be the corresponding fan in $N_{\R}\times\R_{\geq0}$. Let $\Pi$ be the restriction of $\Sigma$ to $N_{\R}\times\{1\}$. Then every bounded face of $\Pi$ is a simplex.
\end{prop}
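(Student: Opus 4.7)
The plan is to compute the dimension of certain intersections of irreducible components of the special fiber $\calX_s$ in two ways --- once combinatorially from the orbit--cone dictionary for $\T$-toric schemes, and once geometrically from the transversality forced by semistability --- and then to equate them.

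First I would set up the combinatorial side. Each vertex $v$ of $\Pi$ corresponds to the ray $\rho_v:=\R_{\geq0}(v,1)\in\Sigma$, and under the orbit--cone dictionary to an irreducible component $D_v$ of $\calX_s$. For a bounded face $P$ of $\Pi$ with vertices $v_1,\ldots,v_k$, the unique cone $\sigma\in\Sigma$ with $\sigma\cap(N_{\R}\times\{1\})=P\times\{1\}$ has no rays in $N_{\R}\times\{0\}$ (otherwise $P$ would be unbounded), so $\sigma$ is the cone over $P\times\{1\}$ from the origin, namely $\sigma=\rho_{v_1}+\cdots+\rho_{v_k}$, which has dimension $\dim P+1$. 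Thus $\overline{O_\sigma}=\bigcap_{i=1}^k D_{v_i}$ has dimension $(n+1)-(\dim P+1)=n-\dim P$, where $n=\dim\T$.

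Next I would use semistability on the geometric side. At any closed point $p\in\bigcap_i D_{v_i}$, there is an \'{e}tale neighborhood of $p$ admitting an \'{e}tale morphism to a model semistable $K^\circ$-scheme $\Spec K^\circ[x_0,\ldots,x_{n'}]/(x_0\cdots x_m-\pi')$, whose special fiber is the union of components $V(x_0),\ldots,V(x_m)$ meeting transversely: any $j$ of them cut out a subscheme of codimension $j$. Because $D_{v_1},\ldots,D_{v_k}$ are pairwise distinct global divisors through $p$, their pullbacks under the \'{e}tale morphism remain pairwise distinct and therefore identify with $k$ distinct local components. Hence $\bigcap_i D_{v_i}$ has codimension $k$ at $p$, so dimension $n+1-k$. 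Equating this with the previous computation gives $k=\dim P+1$, which forces the $\dim P$-dimensional polytope $P$ to be a simplex.

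The main obstacle I expect is verifying the two foundational facts used above in the $\T$-toric, $K^\circ$-relative setting: that $\overline{O_\sigma}$ equals the intersection of the divisors associated to the rays of $\sigma$ when $\sigma$ meets the open upper half-space, and that distinct toric divisors through a common closed point remain distinct when pulled back to an \'{e}tale neighborhood. Both should follow by adapting the classical orbit--cone correspondence and using flatness and unramifiedness of \'{e}tale morphisms, but carrying them out requires some care in the relative setting.
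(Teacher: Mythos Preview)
Your argument is correct and follows essentially the same route as the paper's proof: both use the orbit--cone correspondence to identify $\bigcap_i D_{v_i}$ with the orbit closure attached to the cone over $P$, compute its (co)dimension combinatorially, and then compare with the codimension forced by the normal-crossings nature of the special fiber (you via the \'etale-local model, the paper by invoking the normal crossings property directly). The only cosmetic difference is that the paper phrases the codimension count inside $\calX_s$ (getting $d-1=\dim P$) while you phrase it inside $\calX$ (getting $n+1-k=n-\dim P$), and the two foundational points you flag as obstacles are exactly what the paper handles by citing \cite[\S7.9, Proposition~8.8]{GublerGuideTrop} and the definition of normal crossings.
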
\begin{proof}
Because $\calX$ is semistable, the special fiber $\calX_s$ of $\calX$ is a normal crossings variety over the residue field $k$ of $K$. In particular, the intersection of any $d$ irreducible components of $\calX_s$ is either empty or has pure codimension $d-1$ in $\calX_s$. 
By \cite[Section 7.9 and Proposition 8.8]{GublerGuideTrop} there is an order-reversing bijection from the set of open faces of $\Sigma$ and the set of torus orbits of $\calX$ which we write as $\tau\mapsto Z_\tau$. 
For any open face $\tau$ of $\Sigma$, $\dim\tau=\codim(Z_\tau,\calX)$. We also have that $Z_\tau\subset \calX_s$ if and only if $\tau\subset N_{\R}\times\R_{>0}$, and in this case $\dim\tau=\codim(Z_\tau,\calX)=\codim(Z_\tau,\calX_s)+1$. 
Thus there is an order reversing bijection between the faces of $\Pi$ and the torus orbits contained in $\calX_s$, denoted $P\mapsto Z_P$, such that $\dim P=\codim(Z_P,\calX_s)$. 
For $P\in\Pi$ let $W_P$ denote the closure of $Z_P$. Then the irreducible components of $\calX_s$ are exactly the $W_v$ with $v$ a vertex of $\Pi$, and if $v_1,\ldots,v_d$ are vertices of $\Pi$ then the intersection $W_{v_1}\cap\cdots\cap W_{v_d}$ is $W_{P}$ where $P$ is the smallest face of $\Pi$ containing all of $v_1,\ldots,v_d$, or is empty if no such face of $\Pi$ exists. In particular, if a bounded face $P\in\Pi$ has exactly $d$ vertices $v_1,\ldots,v_d$ then we must have 
$$d-1=\codim(W_{v_1}\cap\cdots\cap W_{v_d},\calX_s)=\codim(W_P,\calX_s)=\dim P$$
so $P$ must be a simplex.
\end{proof}

\begin{remark}
The proof of Proposition \ref{prop:SemistableGivesBoundedSimplices} also shows that if $\Sigma$ is a $\Gamma$-admissible fan in $N_{\R}\times\R_{\geq0}$ corresponding to a $\T$-toric variety $\calX$ with special fiber $\calX_s$ and $\Pi$ is the restriction of $\Sigma$ to $N_{\R}\times\{1\}$ then the set of bounded faces of $\Pi$ gives a suitable notion of a dual complex for the variety $\calX_s$ which agrees with the usual definition of the dual complex in the case when $\calX_s$ is a simple normal crossings variety.
\end{remark}

\begin{thm}
Assume that the rational rank of $\Gamma$ is at least two, i.e., that $\Gamma$ contains two $\Q$-linearly independent elements. Let $\T=\G_{m,K^\circ}^2$ and let $\calX=\calY(\Sigma)$ where $\Sigma$ is the fan constructed in Example \ref{example:FanWOASimplicialCompletion}. Then $\calX$ is strictly semistable but does not have any semistable equivariant completion.
\end{thm}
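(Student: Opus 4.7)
The proof splits into two parts: verifying that $\calX$ is strictly semistable, and ruling out semistable equivariant completions.

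For strict semistability, I would exhibit each affine chart $\calU_{\sigma_i}$ as a Zariski open subscheme of a model semistable $K^\circ$-scheme. Each maximal cone $\sigma_i$ is two-dimensional, generated by the rays through the two vertices $v_i, v_i' \in N_\Gamma$ of the edge $P_i := \sigma_i \cap (N_\R \times \{1\})$, where for instance $v_1 = (0,0)$ and $v_1' = (-3\alpha, 0)$. Write $v_i' - v_i = c_i n_i$ with $n_i$ primitive in $N$ and $c_i \in \Gamma_{>0}$; the four values of $c_i$ are $3\alpha$, $3\beta$, $\alpha+2\beta$, and $2\alpha+\beta$. A $\mathrm{GL}_2(\Z)$ change of basis on $N$ sends $n_i$ to $(1,0)$ and reduces the computation of $K[M]^{\sigma_i}$ to the case where $\sigma_i$ is generated by $(0,0,1)$ and $(c_i, 0, 1)$; a direct computation then yields $K[M]^{\sigma_i} \cong K^\circ[A, B, C^{\pm 1}]/(AB - \pi_i)$ for any $\pi_i \in K$ with $v(\pi_i) = c_i$, which is the localization at $C$ of the model semistable $K^\circ$-scheme $\spec K^\circ[A, B, C]/(AB - \pi_i)$ (the case $m=1$, $n=2$). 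Since open immersions are \'etale and the $\calU_{\sigma_i}$ cover $\calX$, this shows $\calX$ is strictly semistable.

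To rule out semistable equivariant completions, suppose for contradiction that such a completion $\nbar{\calX}$ exists. Since semistable schemes are normal and $\nbar{\calX}$ is of finite type and proper over $K^\circ$, the classification recalled in Section \ref{subsec:BackgroundTToric} together with \cite[Proposition 11.8]{GublerGuideTrop} yields $\nbar{\calX} \cong \calY(\nbar{\Sigma})$ for a complete $\Gamma$-admissible fan $\nbar{\Sigma}$ with $\Sigma \subset \nbar{\Sigma}$. Proposition \ref{prop:SemistableGivesBoundedSimplices} then forces every bounded face of $\nbar{\Pi} := \nbar{\Sigma}|_{N_\R \times \{1\}}$ to be a simplex. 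Let $D$ denote the dart-shaped bounded region enclosed by $|\Pi|$; by completeness of $\nbar{\Sigma}$, the subcomplex of $\nbar{\Pi}$ consisting of faces contained in $D$ has support exactly $D$, and its maximal faces are triangles, yielding a triangulation $\Pi'$ of $D$. Each edge of $\Pi'$ is $\Gamma$-rational, hence lies on a line $\{w \in \R^2 \mid \angbra{u,w} = \gamma\}$ with $u \in M = \Z^2$, and so has rational slope. The argument given in Example \ref{example:FanWOASimplicialCompletion} applies verbatim: such a triangulation would force every vertex of $\Pi'$ into the single coset $\alpha\Q^2$, contradicting the presence of $(0, -3\beta)$ together with the $\Q$-linear independence of $\alpha$ and $\beta$.

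The main obstacle is the strict semistability step, where one must verify that the change-of-basis computation for each $\sigma_i$ (in particular for the non-axis-aligned cones $\sigma_3$ and $\sigma_4$) yields an affine chart that is genuinely Zariski open in a model semistable scheme, not merely \'etale-equivalent to one. The non-existence of a semistable completion is then largely mechanical, reducing to Proposition \ref{prop:SemistableGivesBoundedSimplices} and the combinatorial obstruction already established in Example \ref{example:FanWOASimplicialCompletion}.
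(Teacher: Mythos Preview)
Your proposal is correct and follows essentially the same approach as the paper: you verify strict semistability by identifying each $\calU_{\sigma_i}$ with $\Spec K^\circ[A,B,C^{\pm1}]/(AB-\pi_i)$ via a lattice change of basis (the paper states the same isomorphism, citing the local-cone description of generators), and you rule out semistable completions by invoking Proposition~\ref{prop:SemistableGivesBoundedSimplices} and then the combinatorial obstruction from Example~\ref{example:FanWOASimplicialCompletion}. Your write-up is in fact slightly more explicit than the paper's in spelling out the lattice lengths $c_i$ and in noting that only the \emph{bounded} faces of $\nbar{\Pi}$ (those inside the dart) need to be simplices for the argument of Example~\ref{example:FanWOASimplicialCompletion} to apply.
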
\begin{proof}
We first show that $\calX$ is strictly semistable. Consider any cone $\sigma$ in $N_{\R}\times\R_{\geq0}$ such that $\sigma\cap(N_{\R}\times\{1\})=P\times\{1\}$ with $P$ a $\Gamma$-rational line segment with endpoints in $N_{\Gamma}$. Let $\calU_\sigma$ be the corresponding normal affine $\T$-toric variety. 
A quick computation using the local cones of $P$ shows that $\calU_\sigma$ is isomorphic to $\Spec K^\circ[x_0,x_1,x_2^{\pm1}]/(x_0x_1-\pi)$ where $\pi\in K^\circ$ is such that $v(\pi)$ is the lattice length of $P$. 
Since the maximal cones of $\Sigma$ are all of the form described above, this shows that $\calX$ is covered by Zariski open subsets which are isomorphic to Zariski open subsets of model semistable $K^\circ$-schemes, so $\calX$ is strictly semistable.

Now suppose $\nbar{\calX}$ were a semistable equivariant completion of $\calX$. Then $\nbar{\calX}$ would give a $\Gamma$-admissible completion $\nbar{\Sigma}$ of $\Sigma$. Let $\Pi$ and $\nbar{\Pi}$ be the restrictions of $\Sigma$ and $\nbar{\Sigma}$, respectively, to $N_{\R}\times\{1\}$. Then by Proposition \ref{prop:SemistableGivesBoundedSimplices}, all of the bounded faces of $\nbar{\Pi}$ would be simplices. But in Example \ref{example:FanWOASimplicialCompletion} we showed that no such completion of $\Pi$ could exist. Hence $\calX$ does not have any semistable equivariant completion.
\end{proof}

%
%

\bibliographystyle{plain}
\bibliography{NormalCompletions}

\end{document}